\numberwithin{equation}{section}
\newcommand{\origsetminus}{} \let\origsetminus=\setminus           
\renewcommand{\setminus}{\!\origsetminus\!}
\let\oldmarginpar\marginpar
\renewcommand\marginpar[1]{\-\oldmarginpar[\raggedleft\footnotesize #1]%
{\raggedright\footnotesize #1}}
\theoremstyle{plain}
\newtheorem{lemma}{Lemma}[section]
\newtheorem{theorem}[lemma]{Theorem}
\newtheorem{corollary}[lemma]{Corollary}
\newtheorem{definition}[lemma]{Definition}
\newtheorem{example}[lemma]{Example}
\newtheorem{remark}[lemma]{Remark}
\renewcommand{\mathbb}{\mathbbm}                     
\renewcommand{\epsilon}{\varepsilon}                 
\renewcommand{\phi}{\varphi}
\renewcommand{\theta}{\vartheta}
\renewcommand{\le}{\leqslant}
\renewcommand{\ge}{\geqslant}
\newcommand{\origfoo}{} \let\origfoo=\sqrt           
\renewcommand{\sqrt}[1]{\origfoo{#1}\;}
\renewcommand{\O}{{\mathcal O}}                      
\newcommand{\abs}[1]{\left\lvert #1 \right\rvert}    
\newcommand{\norm}[1]{\left\lVert #1 \right\rVert}   
\DeclareMathOperator{\F}{{\cal F}}                   
\DeclareMathOperator{\R}{{\mathbb R}}                
\DeclareMathOperator{\Rp}{{\mathbb R}_+}             
\DeclareMathOperator{\C}{{\mathbb C}}                
\DeclareMathOperator{\N}{{\mathbb N}}                
\newcommand{\A}{{\mathcal A}}
\DeclareMathOperator{\Borel}{{\mathcal B}}
\newcommand{\scapro}[2]{\langle #1,#2\rangle}       
\newcommand{\scaprob}[2]{\big\langle #1,#2\big\rangle}       
\DeclareMathOperator{\1}{\mathbbm 1}
\renewcommand{\L}{{\mathcal L}}
\newcounter{zahl}
\DeclareMathOperator{\Cc}{{\hat{\mathcal Z}}}
\DeclareMathOperator{\Z}{{\mathcal Z}}
\renewcommand{\H}{{\mathcal S}}
\DeclareMathOperator{\G}{{\mathcal G}}
\DeclareMathOperator{\leb}{\text{leb}}
\newcommand*{\inlineequation}[2][]{%
  \begingroup
    \refstepcounter{equation}%
    \ifx\\#1\\%
    \else
      \label{#1}%
    \fi
    \relpenalty=10000 %
    \binoppenalty=10000 %
    \ensuremath{%
      #2%
    }%
    ~\@eqnnum
  \endgroup
}
\title{Ornstein-Uhlenbeck processes driven by\\ cylindrical L{\'e}vy processes}
\author{ Markus Riedle\footnote{markus.riedle@kcl.ac.uk}{}\\
Department of Mathematics\\
King's College\\
London WC2R 2LS\\
United Kingdom }
\begin{document}

\maketitle

\begin{abstract}
In this article we introduce a theory of integration for deterministic, operator-valued
integrands with respect to cylindrical L{\'e}vy processes in separable Banach spaces. Here, a cylindrical L{\'e}vy process is understood in the classical framework of cylindrical random variables and cylindrical measures, and thus, it can be considered as a natural generalisation of cylindrical Wiener processes or white noises. Depending on the underlying Banach space, we provide necessary and/or sufficient conditions for a function to be integrable. In the last part, the developed theory is applied to define Ornstein-Uhlenbeck processes driven by cylindrical L{\'e}vy processes and several examples are considered.
\end{abstract}


\section{Introduction}

The degree of freedom of models in infinite dimensions is often reflected by the constraint
that each mode along a one-dimensional subspace is independently perturbed by the noise. In the Gaussian
setting, this leads to the {\em cylindrical Wiener process } including from a modeling point
of view the very important possibility to describe a Gaussian noise in both time and space
with a great flexibility, i.e.\ space-time white noise. Up to very recently, there has been no
analogue  for L{\'e}vy processes. The notion {\em cylindrical L{\'e}vy process} appears
the first time in the monograph \cite{PeszatZab} by Peszat and Zabczyk  and it is followed by the works Brze\'zniak et al \cite{Brzetal}, Brze\'zniak and Zabzcyk
\cite{BrzZab10}, Liu and Zhai \cite{LiuZhai}, Peszat and Zabczyk \cite{PeszatZab12} and Priola and Zabczyk \cite{PriolaZabczyk}. The first systematic
introduction of cylindrical L{\'e}vy processes appears in our work Applebaum and Riedle \cite{DaveMarkus}. In this work cylindrical L{\'e}vy processes are introduced as a natural generalisation of cylindrical Wiener processes, and they model a very general, discontinuous noise occurring in the time and state space.

The aforementioned literature (\cite{Brzetal}, \cite{BrzZab10}, \cite{LiuZhai}, \cite{PeszatZab12}, \cite{PriolaZabczyk}) study stochastic evolution equations
of the form
\begin{align}\label{eq.Cauchyintro}
  dY(t)=AY(t)\,dt + dL(t)\qquad\text{for all }t\ge 0,
\end{align}
where $A$ is the generator of a strongly continuous semigroup on a Hilbert or Banach space.
The driving noise $L$ differs in these publications but it is always constructed in an explicit way and it is referred to by the authors as {\em L{\'e}vy white noise}, {\em cylindrical stable process} or just {\em L{\'e}vy noise}. The works have in common that the solution of \eqref{eq.Cauchyintro} is represented by a stochastic convolution integral, which is based either  on the one-dimensional integration theory, if the setting allows as for instance in \cite{Brzetal}, \cite{PeszatZab12}, or on moment inequalities for Poisson random measures as for instance in \cite{BrzZab10}. However, these approaches and the results are tailored to the specific kind of noise under consideration, respectively.

The main objective of our work is to develop a general theory of stochastic integration for deterministic integrands, which provides a unified framework for the aforementioned works. Although not part of this work, the results are expected to lead to a better understanding of phenomena, which are individually observed  for the solutions of \eqref{eq.Cauchyintro} in the various models considered in the literature, such as irregularity of trajectories in \cite{Brzetal}. In order to be able to develop a general theory, we define cylindrical L{\'e}vy processes by following the classical approach to cylindrical measures and cylindrical processes, which is presented for example in Badrikian \cite{Badrikian} or Schwartz \cite{Schwartz}. This systematic approach for cylindrical L{\'e}vy processes is developed in our work together with Applebaum in \cite{DaveMarkus}. In the current work, we illustrate  that those kinds of cylindrical L{\'e}vy noise, considered in the literature, are specific examples of a cylindrical L{\'e}vy process in our approach.

Integration for random integrands with respect to other cylindrical processes than the cylindrical Wiener process is only considered in a few works. In fact, we are only aware of two approaches to integration with respect to cylindrical martingales, which originate either in the work developed by  M{\'e}tivier  and Pellaumail in  \cite{MetivierPellcylindrical} and \cite{MetivierPell} or by Mikulevi\v{c}ius and Rozovski\v{\i} in \cite{MikRoz98} and \cite{MikRoz99}. However,  both constructions  heavily rely on the assumption of finite weak second moments and are therefore not applicable in our framework.  For cylindrical L{\'e}vy processes with weak second moments, a straightforward integration theory is introduced in Riedle \cite{Riedle12}. It is worth mentioning that a localising procedure cannot be applied to cylindrical L{\'e}vy processes since they do not necessarily  attain values in the underlying space.

Our work can be seen as a generalisation of the publications  by
Chojnowska-Michalik \cite{Anna} on the one hand and  by Brze\'zniak and van Neerven \cite{BrzezniakvanNeerven-studia} and  by van Neerven and Weis \cite{vanNeervenWeis} on the other hand. In \cite{Anna} the author introduces a stochastic integral for deterministic integrands with respect to genuine L{\'e}vy processes in Hilbert spaces. If we apply our approach to this specific setting, the class of admissible integrands for the integral, developed in our work, is much larger than the one in \cite{Anna}, see Remark \ref{re.anna}. The articles \cite{BrzezniakvanNeerven-studia} and \cite{vanNeervenWeis} introduce a stochastic integral with respect to a cylindrical Wiener processes in Banach spaces. But the approach in  \cite{BrzezniakvanNeerven-studia} and \cite{vanNeervenWeis} differs from ours as the Gaussian distribution enables the authors to rely on an isometry
in terms of square means.

Our approach to develop a stochastic integral is based on the idea to introduce first a
cylindrical integral, which exists under mild conditions, since cylindrical random variables are  more general
objects than genuine random variables. A function is then called  stochastically integrable if its cylindrical integral is actually induced by a genuine random variable in the underlying Banach space, which is then called the stochastic integral. The advantage of this approach is that
the latter step is a purely measure-theoretical problem, which can be formulated in terms of the characteristic function of the cylindrical integral. Since the stochastic integral, if it exists, must be infinitely divisible, we can describe the class of integrable functions by using conditions which guarantee the existence of an infinitely divisible random variable for a given  semimartingale characteristics.
The candidate for the semimartingle characteristics is provided by the cylindrical integral, since its characteristic function coincides with the one of its possible extension to a genuine random variable. Conditions, guaranteeing the existence of an infinitely divisible measure in terms of the characteristic function, are known in many spaces, e.g. in  Hilbert spaces and in Banach spaces of Rademacher type. The developed theory of stochastic integration is applied to define Ornstein-Uhlenbeck processes driven by cylindrical L{\'e}vy processes. We show in two corollaries, that our approach easily  recovers results from the literature on the existence of Ornstein-Uhlenbeck processes.

The organisation of this paper is as follows. In Section \ref{se.preliminaries} we collect some
preliminaries on cylindrical measures and cylindrical random variables, which can be found for example in Badrikian \cite{Badrikian} and Schwartz \cite{Schwartz}. In Section 3 we recall the definition of a cylindrical L{\'e}vy process, based on our paper \cite{DaveMarkus} with Applebaum, and we cite some results on its characteristics from
Riedle \cite{Riedle11}. Furthermore, some important properties of cylindrical L{\'e}vy processes are established. Section 4 provides several examples of cylindrical L{\'e}vy processes. In particular, we show that the noises, considered in the aforementioned publications, are covered by our systematic approach. In Section 5 we develop a theory of stochastic integration for deterministic, operator-valued integrands with respect to cylindrical L{\'e}vy processes. We finish this work with Section 6 where we apply the developed  integration theory to treat Ornstein-Uhlenbeck processes.

\section{Preliminaries}\label{se.preliminaries}

Let $U$ be a separable Banach space with dual $U^\ast$. The dual pairing is denoted by
$\scapro{u}{u^\ast}$ for $u\in U$ and $u^\ast\in U^\ast$. The Borel $\sigma$-algebra in $U$ is
denoted by  $\Borel(U)$ and the closed unit ball at the origin by $B_U:=\{u\in U:\, \norm{u}\le 1\}$. The space of positive, finite Borel measures on $\Borel(U)$ is denoted by $M(U)$
and it is equipped with the the topology of weak convergence. The Bochner space is denoted by
$L^1([0,T];U)$ and it is equipped with the standard norm.

For every $u^\ast_1,\dots, u^\ast_n\in U^{\ast}$ and $n\in\N$ we define a linear map
\begin{align*}
  \pi_{u^\ast_1,\dots, u^\ast_n}\colon U\to \R^n,\qquad
   \pi_{u^\ast_1,\dots, u^\ast_n}(u)=(\scapro{u}{u^\ast_1},\dots,\scapro{u}{u^\ast_n}).
\end{align*}
Let $\Gamma$ be a subset of $U^\ast$. Sets of the form
 \begin{align*}
C(u^\ast_1,\dots ,u^\ast_n;B):&= \{u\in U:\, (\scapro{u}{u^\ast_1},\dots,
 \scapro{u}{u^\ast_n})\in B\}\\
 &= \pi^{-1}_{u^\ast_1,\dots, u^\ast_n}(B),
\end{align*}
where $u^\ast_1,\dots, u^\ast_n\in \Gamma$ and $B\in \Borel(\R^n)$ are called {\em
cylindrical sets}. The set of all cylindrical sets is denoted by $\Z(U,\Gamma)$
and it is an algebra. The generated $\sigma$-algebra is denoted by
$\Cc(U,\Gamma)$ and it is called the {\em cylindrical $\sigma$-algebra with
respect to $(U,\Gamma)$}. If $\Gamma=U^\ast$ we write $\Z(U):=\Z(U,\Gamma)$ and
$\Cc(U):=\Cc(U,\Gamma)$.

A function $\eta\colon \Z(U)\to [0,\infty]$ is called a {\em cylindrical measure on
$\Z(U)$}, if for each finite subset $\Gamma\subseteq U^\ast$ the restriction of
$\eta$ to the $\sigma$-algebra $\Cc(U,\Gamma)$ is a measure. A cylindrical
measure $\eta$ is called finite if $\eta(U)<\infty$ and a cylindrical probability
measure if $\eta(U)=1$.

For every function $f\colon U\to\C$ which is measurable with respect to
$\Cc(U,\Gamma)$ for a finite subset $\Gamma\subseteq U^\ast$ the integral $\int
f(u)\,\eta(du)$ is well defined as a complex valued Lebesgue integral if it
exists. In particular, the characteristic function $\phi_\eta\colon U^\ast\to\C$ of a
finite cylindrical measure $\eta$ is defined by
\begin{align*}
 \phi_{\eta}(u^\ast):=\int_U e^{i\scapro{u}{u^\ast}}\,\eta(du)\qquad\text{for all }u^\ast\in
 U^\ast.
\end{align*}

Let $(\Omega,\A,P)$ be a probability space. The space of equivalence classes of measurable functions $f\colon\Omega\to U$ is denoted by $L_P^0(\Omega;U)$ and it is equipped with the  topology of convergence in probability.

Similarly to the correspondence between measures and random variables there is an analogous random object associated to cylindrical measures: a {\em cylindrical random variable $Z$ in $U$} is a linear and continuous map
\begin{align*}
 Z\colon U^\ast \to L^0_P(\Omega;\R).
\end{align*}
Here, continuity is with respect to the norm topology on $U^\ast$ and the topology of convergence in probability. A family $(Z(t):\,t\ge 0)$ of cylindrical random variables $Z(t)$ is called a {\em cylindrical process}.
The characteristic function of a cylindrical random  variable $Z$ is
defined by
\begin{align*}
 \phi_Z\colon U^\ast \to\C, \qquad \phi_Z(u^\ast)=E\big[\exp(iZu^\ast)\big].
\end{align*}
If $C=C(u_1^\ast,\dots, u_n^\ast;B)$ is a cylindrical set for
$u^\ast_1,\dots, u^\ast_n\in U^\ast$ and $B\in \Borel(\R^n)$ we obtain a cylindrical probability measure $\eta$ by the prescription
\begin{align*}
  \eta(C):=P\big((Zu^\ast_1,\dots, Zu^\ast_n)\in B\big).
\end{align*}
We call $\eta$ the {\em cylindrical distribution of $Z$} and the
characteristic functions $\phi_\eta$ and $\phi_Z$ of $\eta$ and $Z$
coincide. Conversely, for every cylindrical probability measure $\eta$ on
$\Z(U)$ there exist a probability space $(\Omega,\A,P)$ and a
cylindrical random variable $Z\colon U^\ast\to L^0_P(\Omega;\R)$ such
that  $\eta$ is the cylindrical distribution of $Z$, see
\cite[VI.3.2]{Vaketal}.

Let $\theta$ be an infinitely divisible probability measure on $\Borel(U)$. Then the characteristic function  $\phi_{\theta}\colon U^\ast\to\C$ of $\theta$ is given for each $u^\ast\in U^\ast$  by
\begin{align} \label{eq.charLevy}
  \phi_{\theta}(u^\ast)=\exp\left( i\scapro{b}{u^\ast}-\tfrac{1}{2} \scapro{Ru^\ast}{u^\ast}
   +\int_U\left(e^{i\scapro{u}{u^\ast}}-1- i\scapro{u}{u^\ast}   \1_{B_U}(u)\right)\nu(du)    \right),
\end{align}
where $b\in U$, $R\colon U^\ast \to U$ is the covariance operator of a Gaussian measure on $\Borel(U)$ and $\nu$ is a $\sigma$-finite measure on $\Borel(U)$.
Since the triplet $(b,R,\nu)$ is unique (\cite[Th.5.7.3]{Linde}), it  characterises the distribution of the probability measure $\theta$, and  it is  called the {\em characteristics of  $\theta$}. If $X$ is an $U$-valued random variable which is infinitely divisible, then we call the characteristics of its probability distribution the characteristics of $X$.

In general Banach spaces it is not as straightforward to define a L{\'e}vy measure as in Hilbert spaces.
In this work we use the following result (Theorem 5.4.8 in Linde [12]) as the definition:
a $\sigma$-finite measure $\nu$ on a Banach space $U$ is called a {\em L{\'e}vy measure} if
\begin{enumerate}
  \item[(i)]  $\displaystyle \int_U\big(\scapro{u}{u^\ast}^2\wedge 1\big)\,\nu(du)<\infty$ for all $u^\ast\in U^\ast$;
  \item[(ii)] there exists a measure on $\Borel(U)$ with characteristic function
\begin{align*}
  \phi(u^\ast)=\exp\left(\int_U \left( e^{i\scapro{u}{u^\ast}}-1-i\scapro{u}{u^\ast}\1_{B_U}(u)\right)
  \,\nu(du)\right).
\end{align*}
\end{enumerate}

Let $\{\F_t\}_{t\ge 0}$ be a filtration for the probability space
$(\Omega,\A,P)$. An adapted, stochastic process $L:=(L(t):\, t\ge 0)$ with values in $U$ is called a {\em L{\'e}vy process} if $L(0)=0$ $P$-a.s., $L$ has independent and stationary increments and  $L$ is continuous in probability. It follows 
that there exists a version of $L$ with paths which are continuous from the right and have limits from the left (c{\`a}dl{\`a}g paths). The random variable $L(1)$ is infinitely divisible and we call its characteristics the characteristics of $L$.

\section{Cylindrical L{\'e}vy processes}

Let $U$ be a separable Banach space. A cylindrical probability measure $\eta$ on $\Z(U)$ is called {\em infinitely divisible} if for each $k\in\N$ there exists a cylindrical probability measure $\eta_k$ such that $\eta=\big(\eta_k)^{\ast k}$. In \cite{DaveMarkus} and \cite{Riedle11}
we show that the characteristic function  $\phi_{\eta}\colon U^\ast\to\C$ of $\eta$ can be represented by
\begin{align}\label{eq.charcylLevy}
\begin{split}
    \phi_{\eta}(u^\ast)&=\exp\left(i a(u^\ast) -\tfrac{1}{2} qu^\ast
   +\int_U\left(e^{i\scapro{u}{u^\ast}}-1- i\scapro{u}{u^\ast}   \1_{B_{\R}}(\scapro{u}{u^\ast})\right)\mu(du) \right)  \\
  &=:\exp\big(\Psi(u^\ast)\big),
\end{split}
\end{align}
where $a\colon U^\ast\to\R$ is a mapping with $a(0)=0$ and which is continuous on finite dimensional subspaces,  $q\colon U^\ast \to \R$ is a quadratic form and $\mu$ is a cylindrical measure on $\Z(U)$ satisfying
\begin{align}\label{eq.cylLevymeasure}
  \int_U \big(\scapro{u}{u^\ast}^2 \wedge 1\Big) \,\mu(du)<\infty
  \qquad\text{for all }u^\ast\in U^\ast.
\end{align}
Consequently, the triplet $(a,q,\mu)$ characterises the distribution of the cylindrical measure $\eta$ and thus, it is  called the {\em (cylindrical) characteristics of $\eta$}. The mapping $\Psi\colon U^\ast\to \C$ is called the {\em (cylindrical)  symbol} of $\eta$.

We call a cylindrical measure $\mu$ on $\Z(U)$ a {\em (cylindrical) L{\'e}vy measure } if it satisfies \eqref{eq.cylLevymeasure}. However, note that it is not sufficient for a cylindrical measure $\mu$ to satisfy \eqref{eq.cylLevymeasure} in order to guarantee that there exists a corresponding infinitely divisible cylindrical measure with characteristics $(0,0,\mu)$, see \cite{Riedle11}. For a cylindrical or classical L{\'e}vy measure $\mu$ we denote $\mu^-(C):=\mu(-C)$ for all $C\in\Z(U)$.

A cylindrical process $(L(t):\,t\ge 0)$ is called a {\em cylindrical L{\'e}vy process in $U$} if
for all $u_1^\ast,\dots, u_n^\ast\in U^\ast$ and $n\in\N$ we have that
\begin{align*}
  \big((L(t)u_1^\ast,\dots, L(t)u_n^\ast):\, t\ge 0\big)
\end{align*}
is a L{\'e}vy process in $\R^n$. This definition is introduced in our work \cite{DaveMarkus}. It follows that the cylindrical distribution $\eta$ of $L(1)$ is infinitely divisible and that  the characteristic function of $L(t)$ for all $t\ge 0$ is given by
\begin{align}\label{eq.charcylLevy-Levy}
\phi_{L(t)}\colon U^\ast\to\C,\qquad  \phi_{L(t)}(u^\ast)&=\exp\big(t\Psi(u^\ast)\big),
\end{align}
where $\Psi\colon U^\ast \to \C$ is the symbol of $\eta$. We call the symbol $\Psi$ and the characteristics $(a,q,\mu)$ of $\eta$  the {\em  (cylindrical) symbol} and the {\em (cylindrical) characteristics of $L$}.

A cylindrical L{\'e}vy process $(L(t):\,t\ge 0)$ with characteristics $(a,q,\mu)$ can be decomposed into
\begin{align}\label{eq.L-decompose}
  L(t)=W(t)+P(t) \qquad\text{for all }t\ge 0,
\end{align}
where $W(t)$ and $P(t)$ are linear maps from $U^\ast$ to $L_P^0(\Omega;\R)$, see \cite[Th.3.9]{DaveMarkus}. For each $u^\ast\in U^\ast$ the stochastic processes $(W(t)u^\ast:\, t\ge 0)$ and $(P(t)u^\ast:\,t\ge 0)$ are unique up to indistinguishability by \cite[Th.I.4.18]{JacodShiryaev}.
In addition to the assumed continuity of the operator $L(t)$ we require in this work  that $W(t)$ and $P(t)$ are continuous for each $t\ge 0$,
in which case $(W(t):\,t\ge 0)$ is a cylindrical L{\'e}vy process with characteristics $(0,q,0)$ and $(P(t):\,t\ge 0)$ is an independent, cylindrical L{\'e}vy process with characteristics $(a,0,\mu)$.
Lemma 4.4.\ in \cite{Riedle11} guarantees that the characteristics $(a,q,\mu)$ obeys:
\begin{enumerate}
\item[(1)] $a\colon U^\ast\to \R \;$ is continuous;
\item[(2)] there exists a positive, symmetric operator $Q\colon U^\ast\to U^{\ast\ast}$ such that
\begin{align*}
 qu^\ast = \scapro{u^\ast}{Qu^\ast}\; \text{ for all $u^\ast \in U^\ast$};
\end{align*}
\item[(3)] for every sequence $(u_n^\ast)_{n\in\N}\subseteq U^\ast$ with $\norm{u_n^\ast - u_0^\ast}\to 0$ for some $u_0^\ast\in U^\ast$
 it follows that
 \begin{align}\label{eq.weakconvcont}
   & \big(\abs{\beta}^2\wedge 1\big)\big(\mu\circ (u_n^\ast)^{-1}\big)(d\beta)\to
  \big(\abs{\beta}^2\wedge 1\big)\big(\mu\circ (u^\ast_0)^{-1}\big)(d\beta)\;\text{ weakly in $M(\R)$.}
 \end{align}
\end{enumerate}
In this case, we replace the covariance $q$ by the covariance operator $Q$ and write $(a,Q,\mu)$ for the cylindrical characteristics.

We will need several times the following property of an arbitrary cylindrical L{\'e}vy measure in a Banach space. For classical L{\'e}vy measures, the same property can be deduced by different arguments,  see \cite[Pro.5.4.5]{Linde}.
\begin{lemma}\label{le.thelemma}
Let $\mu$ be the cylindrical L{\'e}vy measure of a cylindrical L{\'e}vy process in $U$.
Then for every $\epsilon>0$ there exists a $\delta>0$ such that
\begin{align*}
  \sup_{\norm{u^\ast}\le \delta}\int_U \Big(\abs{\scapro{u}{u^\ast}}^2\wedge 1\Big)\, \mu(du)\le \epsilon.
\end{align*}
\end{lemma}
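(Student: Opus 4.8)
The plan is to read the assertion as the statement that the nonnegative functional
\[
 g\colon U^\ast\to[0,\infty),\qquad g(u^\ast):=\int_U\big(\abs{\scapro{u}{u^\ast}}^2\wedge 1\big)\,\mu(du),
\]
is continuous at the origin, and to extract this continuity from the weak-continuity property \eqref{eq.weakconvcont} recorded as property (3). First I would observe that by \eqref{eq.cylLevymeasure} the quantity $g(u^\ast)$ is finite for every $u^\ast\in U^\ast$; by the image-measure formula it equals the total mass of the finite Borel measure $\big(\abs{\beta}^2\wedge 1\big)\big(\mu\circ(u^\ast)^{-1}\big)(d\beta)$ on $\R$. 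Since $\scapro{u}{0}=0$ and $\abs{0}^2\wedge 1=0$, the measure attached to $u^\ast=0$ is the null measure, whence $g(0)=0$.

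Next I would argue by contradiction. If the conclusion failed, there would be an $\epsilon_0>0$ so that for each $n\in\N$ one could choose $u_n^\ast\in U^\ast$ with $\norm{u_n^\ast}\le 1/n$ and $g(u_n^\ast)>\epsilon_0$. Then $\norm{u_n^\ast-0}\to 0$, so property (3) applies with $u_0^\ast=0$ and gives
\[
 \big(\abs{\beta}^2\wedge 1\big)\big(\mu\circ(u_n^\ast)^{-1}\big)(d\beta)\longrightarrow 0\qquad\text{weakly in }M(\R),
\]
the limit being the null measure by the computation above. Evaluating both sides of this weak convergence on the bounded continuous test function identically equal to $1$ converts it into convergence of total masses, that is $g(u_n^\ast)\to g(0)=0$, contradicting $g(u_n^\ast)>\epsilon_0$ for all $n$. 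This contradiction establishes the lemma.

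The step carrying the genuine content is the passage from weak convergence of the measures to convergence of their total masses. This is legitimate precisely because \eqref{eq.weakconvcont} is formulated as weak convergence in $M(\R)$, equipped with the topology of weak convergence, so that the constant function $1$ is an admissible test function; it would fail for mere vague convergence. The remaining ingredients are routine: the finiteness of $g(u^\ast)$ from \eqref{eq.cylLevymeasure}, the identification $g(0)=0$ coming from $\abs{\scapro{u}{0}}^2\wedge 1=0$, and the standard equivalence in the metric space $U^\ast$ between the sequential continuity supplied by property (3) and the uniform $\epsilon$–$\delta$ form of the bound demanded in the statement.
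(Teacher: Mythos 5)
Your proof is correct, but it takes a genuinely different route from the paper's. You deduce the lemma as an immediate consequence of the weak-continuity property (3) (equation \eqref{eq.weakconvcont}), applied at $u_0^\ast=0$: the limit measure is null (indeed $g(0)=0$ directly, and one should note that $\mu\circ 0^{-1}$ may be an infinite point mass at the origin, so it is really the vanishing of the density $\abs{\beta}^2\wedge 1$ at $\beta=0$ that kills it), and since $M(\R)$ carries the topology of weak convergence of \emph{finite} measures, the constant $1$ is an admissible test function and total masses converge --- exactly the point you correctly flag as the crux, and which would indeed fail for vague convergence. The paper instead gives a self-contained two-part argument that never invokes (3): it symmetrises $L$ to $\tilde L=L-L'$, uses $1-\cos\beta\ge\tfrac13\beta^2$ on $\abs{\beta}\le 1$ together with the continuity of $\phi_{\tilde L(1)}$ to control $\int_{\{\abs{\scapro{u}{u^\ast}}\le 1\}}\abs{\scapro{u}{u^\ast}}^2\,\mu(du)$, and then a separate stopping-time and closed-graph-theorem argument on the c\`adl\`ag paths to control the tail mass $\mu(\{\abs{\scapro{u}{u^\ast}}>1\})$. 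What your approach buys is brevity: the lemma becomes a two-line corollary of a property the paper has already recorded. What it costs is self-containedness: property (3) is imported from Lemma 4.4 of \cite{Riedle11}, and the quantitative content of the present lemma is essentially equivalent to that continuity statement, so the burden of proof is shifted to the external reference rather than discharged; the paper's proof, by contrast, reproves that continuity at the origin from first principles. Within the logic of this paper, where (1)--(3) are stated as established facts under the standing continuity assumption on $W(t)$ and $P(t)$, your argument is legitimate and complete.
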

\begin{proof}
Due to \eqref{eq.L-decompose} we can assume that the cylindrical L{\'e}vy process $L$ has the characteristics $(a,0,\mu)$. If $L^\prime$ denotes an independent copy of $L$ then the cylindrical L{\'e}vy process $\tilde{L}:=L-L^\prime$ has the  characteristics $(0,0, \mu+\mu^-)$.

Define for every $u^\ast\in U^\ast$ the cylindrical set $D(u^\ast):=\{u\in U:\, \abs{\scapro{u}{u^\ast}}\le 1\}$. The inequality $1-\cos(\beta)\ge \tfrac{1}{3}\beta^2$ for all $\abs{\beta}\le 1$ implies by using the symmetry of $\mu+\mu^{-}$, that the characteristic function of $\tilde{L}(1)$ satisfies  for each $u^\ast\in U^\ast$:
\begin{align*}
  \phi_{\tilde{L}(1)}(u^\ast)
  &= \exp\left(-\int_U \big(1- \cos(\scapro{u}{u^\ast})\big)\, (\mu+\mu^-)(du)\right)\\
& \le  \exp\left(-\int_{D(u^\ast)}\big( 1- \cos(\scapro{u}{u^\ast})\big)\, (\mu+\mu^-)(du)\right)\\
& \le  \exp\left(-\tfrac{2}{3}\int_{D(u^\ast)} \abs{\scapro{u}{u^\ast}}^2\, \mu(du)\right).
\end{align*}
Consequently, we obtain
\begin{align*}
  \int_{D(u^\ast)} \abs{\scapro{u}{u^\ast}}^2 \mu(du)
  \le -\tfrac{3}{2}\ln (\phi_{\tilde{L}(1)}(u^\ast))
  \qquad \text{for all }u^\ast\in U^\ast.
\end{align*}
Since $\tilde{L}(1)\colon U^\ast\to L^0_P(\Omega;\R)$ is continuous, its characteristic function $\phi_{\tilde{L}(1)}\colon U^\ast\to \R$ is continuous, see \cite[Pro.IV.3.4]{Vaketal}.
Therefore, there exists a $\delta_1>0$ such that
\begin{align}\label{eq.charandD}
 \sup_{\norm{u^\ast}\le \delta_1} \int_{D(u^\ast)} \abs{\scapro{u}{u^\ast}}^2 \mu(du)<\epsilon.
\end{align}

For the second part of the proof, we define $d(u^\ast):=\mu\big( D(u^\ast)^c\big)$
for all $u^\ast\in U^\ast$,
and we show that for every $\epsilon>0$ there exists a $\delta_2>0$ such that
\begin{align}\label{eq.charandDc}
  \sup_{\norm{u^\ast}\le \delta_2}d(u^\ast)\le\epsilon.
\end{align}
Assume for a contradiction that \eqref{eq.charandDc} is not satisfied. Then there exists a sequence $(u_n^\ast)_{n\in\N}\subseteq U^\ast$ with $u_n^\ast\to 0 $ as $n\to\infty$ and $d(u^\ast_n)> \epsilon$ for all $n\in\N$. For each $n\in\N$ define the  stopping time
$\tau_{u_n^\ast}:=\inf\{t\ge 0:\, \abs{ (L(t)-L(t-))u_n^\ast}>1\}$.
Since for each $n\in \N$ the stopping time $\tau_{u_n^\ast}$ is exponentially distributed with parameter $d(u^\ast_n)$ it follows that
\begin{align}\label{eq.contrad}
 P\Big( \sup_{t\in [0,T]} \abs{L(t)u^\ast_n}\le \tfrac{1}{2}\Big)
\le P\big(\tau_{u_n^\ast}>T\big)
= e^{-d(u_n^\ast)T} < e^{-\epsilon T} \quad\text{for all $n\in\N$}.
\end{align}
Let $D([0,T];\R)$ denote the space of functions on $[0,T]$ with c{\`a}dl{\`a}g trajectories and endow this space with the supremum norm. Define the mapping $L\colon U^\ast\to L^0(\Omega;D([0,T];\R))$ by $Lu^\ast:=(L(t)u^\ast:\,t\in [0,T])$. It follows by the closed graph theorem for $F$-spaces (see \cite[Th.II.6.1]{Yosida}), that $L$ is a continuous mapping. Consequently, we obtain $\sup_{t\in [0,T]} L(t)u_n^\ast\to 0$ in probability as $n\to\infty$,
which contradicts \eqref{eq.contrad}. Thus, we  have proved equality \eqref{eq.charandDc}, which together with \eqref{eq.charandD} completes the proof.
\end{proof}

The first application of the previous Lemma \ref{le.thelemma} establishes that the cylindrical symbol $\Psi$ maps bounded sets into bounded sets.
\begin{lemma}\label{le.Psibounded}
The cylindrical symbol $\Psi$ satisfies for each $c>0$:
\begin{align*}
  \sup_{\norm{u^\ast}\le c} \abs{\Psi(u^\ast)}<\infty.
\end{align*}
\end{lemma}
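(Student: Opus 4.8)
The plan is to bound each of the three terms in the symbol $\Psi(u^\ast) = ia(u^\ast) - \tfrac{1}{2}qu^\ast + \int_U\big(e^{i\scapro{u}{u^\ast}}-1-i\scapro{u}{u^\ast}\1_{B_\R}(\scapro{u}{u^\ast})\big)\mu(du)$ separately and uniformly over the ball $\norm{u^\ast}\le c$. The drift term and the Gaussian term are the easy ones. For the drift, property (1) states that $a$ is continuous, and a continuous real-valued function on the compact... — but here the ball is only bounded, not compact, in infinite dimensions. So I cannot simply invoke compactness. Instead I would argue that $a$ is continuous on finite-dimensional subspaces with $a(0)=0$, together with the full continuity from (1); continuity of $a\colon U^\ast\to\R$ at every point does not by itself give boundedness on bounded sets. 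I expect this to be a genuine subtlety, and the intended resolution is probably that the \emph{whole symbol} $\Psi$ is continuous (being a limit of characteristic functions), so one should bound $\Psi$ directly rather than term-by-term; alternatively one controls $a$ and $q$ via the representation of $\Psi$ itself.

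The main work is the integral term. First I would split the domain using the cylindrical set $D(u^\ast):=\{u\in U:\abs{\scapro{u}{u^\ast}}\le 1\}$ exactly as in Lemma~\ref{le.thelemma}. On $D(u^\ast)$, a second-order Taylor estimate gives $\abs{e^{i\beta}-1-i\beta}\le \tfrac{1}{2}\beta^2$ for real $\beta$, so the integrand is bounded by $\tfrac{1}{2}\scapro{u}{u^\ast}^2$, and I control $\int_{D(u^\ast)}\scapro{u}{u^\ast}^2\,\mu(du)$. On the complement $D(u^\ast)^c$, the integrand $e^{i\scapro{u}{u^\ast}}-1$ (the indicator vanishes there) is bounded by $2$, so that part is at most $2\mu(D(u^\ast)^c)=2d(u^\ast)$. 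Thus the integral term is bounded by $\tfrac{1}{2}\int_{D(u^\ast)}\scapro{u}{u^\ast}^2\,\mu(du) + 2d(u^\ast)$, and both quantities are precisely the two objects that Lemma~\ref{le.thelemma} and its proof control.

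The key step is to upgrade the local bound near $u^\ast=0$ to a uniform bound on the whole ball $\norm{u^\ast}\le c$. Lemma~\ref{le.thelemma} gives, for $\epsilon=1$ say, a $\delta>0$ with $\sup_{\norm{u^\ast}\le\delta}\int_U(\abs{\scapro{u}{u^\ast}}^2\wedge 1)\,\mu(du)\le 1$. To reach radius $c$, I would cover $\{\norm{u^\ast}\le c\}$ by finitely many balls of radius $\delta$—but again infinite dimensions block a naive covering. The clean device is a scaling/subadditivity argument: for $\norm{u^\ast}\le c$ write $u^\ast = (c/\delta)\,v^\ast$ with $\norm{v^\ast}\le\delta$. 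Then $\scapro{u}{u^\ast}^2 = (c/\delta)^2\scapro{u}{v^\ast}^2$ on the corresponding truncation region, so $\int_{D(u^\ast)}\scapro{u}{u^\ast}^2\,\mu(du)\le (c/\delta)^2\int_U(\abs{\scapro{u}{v^\ast}}^2\wedge 1)\,\mu(du)\le (c/\delta)^2$, uniformly. A parallel scaling estimate handles $d(u^\ast)=\mu(D(u^\ast)^c)$, using that $D(u^\ast)^c\subseteq\{u:\abs{\scapro{u}{v^\ast}}>\delta/c\}$ and that $(\abs{\scapro{u}{v^\ast}}^2\wedge 1)\ge (\delta/c)^2\wedge 1$ on that set.

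I expect the scaling step to be the main obstacle, both because it must be set up so that the truncation region $D(u^\ast)$ transforms correctly under the rescaling and because it is the only place where the uniformity over the non-compact ball is genuinely recovered. Once the two integral pieces are bounded uniformly, I combine them with the (separately handled) drift and Gaussian terms to conclude $\sup_{\norm{u^\ast}\le c}\abs{\Psi(u^\ast)}<\infty$, where for the Gaussian term I use $\abs{qu^\ast}=\abs{\scapro{u^\ast}{Qu^\ast}}\le\norm{Q}\,\norm{u^\ast}^2\le\norm{Q}\,c^2$ via the bounded operator $Q$ from property (2).
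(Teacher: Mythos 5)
Your treatment of the Gaussian term and of the integral term is correct and matches the paper's: the pointwise bound on the integrand together with the scaling device $u^\ast=(c/\delta)\,v^\ast$ with $\norm{v^\ast}\le\delta$, which converts the small-ball estimate of Lemma \ref{le.thelemma} into a bound on the ball of radius $c$ at the cost of a factor $(c/\delta)^2$, is exactly how the paper upgrades $\int_U\big(\abs{\scapro{u}{u^\ast}}^2\wedge 1\big)\,\mu(du)$ to the large ball. (The paper compresses your split into $D(u^\ast)$ and $D(u^\ast)^c$ into the single inequality $\abs{e^{i\beta}-1-i\beta\1_{B_{\R}}(\beta)}\le 2\big(\abs{\beta}^2\wedge 1\big)$, but that difference is cosmetic.)

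The genuine gap is the drift term, which you correctly diagnose but do not close. Continuity of $a$ on the non-compact ball $\{\norm{u^\ast}\le c\}$ does not give boundedness there, and your proposed repairs inherit the same defect: $\Psi$ is likewise merely continuous, so ``bounding $\Psi$ directly'' runs into exactly the same obstruction. The paper's resolution is a homogeneity identity that your proposal is missing: since $L(1)(\beta u^\ast)$ and $\beta L(1)u^\ast$ are identically distributed, equating their L\'evy--Khintchine representations yields
\begin{align*}
  a(\beta u^\ast)= \beta a(u^\ast)+ \beta\int_U\scapro{u}{u^\ast}\Big(\1_{B_{\R}}(\beta\scapro{u}{u^\ast})-\1_{B_{\R}}(\scapro{u}{u^\ast})\Big)\,\mu(du),
\end{align*}
and the correction integral is bounded in absolute value by $\int_U\big(\scapro{u}{u^\ast}^2\wedge\beta^{-2}\big)\,\mu(du)$, which is again controlled by Lemma \ref{le.thelemma} via your own scaling argument. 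Choosing $\delta>0$ so that $\abs{a(v^\ast)}\le 1$ for $\norm{v^\ast}\le\delta$ (continuity of $a$ at $0$ together with $a(0)=0$ suffices for this) and setting $\beta=c/\delta$ then gives
$\sup_{\norm{u^\ast}\le c}\abs{a(u^\ast)}\le \beta+\beta\sup_{\norm{u^\ast}\le c}\int_U\big(\scapro{u}{u^\ast}^2\wedge\beta^{-2}\big)\,\mu(du)<\infty$.
Without some such structural identity relating $a$ on the large ball to $a$ near the origin (via quantities controlled by $\mu$), the drift term cannot be bounded, so as written your proof is incomplete on this point.
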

\begin{proof}
Let $L$ be a cylindrical L{\'e}vy process with symbol $\Psi$ and characteristics $(a,Q,\mu)$, so that $\Psi$ is of the form \eqref{eq.charcylLevy}.
Since $L(1)(\beta u^\ast)$ and $\beta L(1)u^\ast$ are identically distributed
for every $u^\ast\in U^\ast$ and $\beta>0$, equating  their L{\'e}vy-Khintchine formula yields
\begin{align}\label{eq.pandp}
  a(\beta u^\ast)= \beta a(u^\ast)+ \beta\int_U\scapro{u}{u^\ast}\Big(\1_{B_{\R}}(\beta\scapro{u}{u^\ast})-\1_{B_{\R}}(\scapro{u}{u^\ast})\Big)
     \mu(du).
\end{align}
The second term on the right hand side can be estimated by
\begin{align}\label{eq.comppandp}
&\int_U\abs{\scapro{u}{u^\ast}}\Big|\1_{ B_{\R}}(\beta \scapro{u}{u^\ast})-\1_{B_{\R}}(\scapro{u}{u^\ast})\Big|
     \mu(du)\notag\\
 &\qquad \le \int_{\abs{\scapro{u}{u^\ast}}\le\tfrac{1}{\beta}}\scapro{u}{u^\ast}^2\, \mu(du)
 + \int_{\abs{\scapro{u}{u^\ast}}>\tfrac{1}{\beta}}\, \mu(du)
 =\int_U \Big(\scapro{u}{u^\ast}^2\wedge \tfrac{1}{\beta^2}\Big)\,\mu(du).
\end{align}
The continuity of $a$ and $a(0)=0$ imply that there exists a $\delta>0$ such that $\abs{a(u^\ast)}\le 1$ for all $\norm{u^\ast}\le \delta$.
By choosing $\beta=\tfrac{c}{\delta}$ it follows from
\eqref{eq.pandp} and \eqref{eq.comppandp} by Lemma \ref{le.thelemma}, that
\begin{align}\label{eq.drift-bounded-bounded}
\sup_{\norm{u^\ast}\le c} \abs{a(u^\ast)}
 \le\beta \sup_{\norm{u^\ast}\le c}\abs{a\left(\tfrac{u^\ast}{\beta}\right)}
  +\beta \sup_{\norm{u^\ast}\le c}\int_U \Big(\abs{\scapro{u}{u^\ast}}^2\wedge \tfrac{1}{\beta^2}\Big)\,\mu(du)
  <\infty.
\end{align}
Boundedness of the term in \eqref{eq.charcylLevy} involving $Q$ can easily be established since $Q\in \L(U^\ast,U^{\ast\ast})$.
Applying the estimate
\begin{align*}
\sup_{\norm{u^\ast}\le c} \int_U\abs{e^{i\scapro{u}{u^\ast}}-1- i\scapro{u}{u^\ast}   \1_{B_{\R}}(\scapro{u}{u^\ast})}\,\mu(du)
\le 2\sup_{\norm{u^\ast}\le c} \int_U \Big(\abs{\scapro{u}{u^\ast}}^2\wedge 1\Big)\,\mu(du)
\end{align*}
completes the proof by another application of Lemma \ref{le.thelemma}.
\end{proof}

It is well known that if the covariance of a Gaussian cylindrical measure is majorised by the covariance of a Gaussian measure, it extends to a measure and is Gaussian. Next we will derive the analogue result for cylindrical L{\'e}vy measures, following the presentation of the result in the Gaussian setting:
\begin{theorem}\label{th.Gauss-dominated}
  Let $\eta$ be a centralised, Gaussian cylindrical measure on $\Z(U)$ with covariance
 $q\colon U^\ast\to \R$, i.e.
  \begin{align*}
 q(u^\ast)= \int_U \scapro{u}{u^\ast}^2\,\eta(du).
  \end{align*}
If $\theta$ is a Gaussian measure on $\Borel(U)$ with covariance operator $R\colon U^\ast\to U$ satisfying
\begin{align*}
  q(u^\ast)\le \scapro{u^\ast}{Ru^\ast}
  \qquad\text{for all }u^\ast\in U^\ast,
\end{align*}
then $\eta$ extends to a measure on $\Borel(U)$ and the extension is Gaussian.
\end{theorem}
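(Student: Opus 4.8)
The plan is to realise both covariances through Hilbert spaces and to use the domination to transport onto $\eta$ the \emph{radonifying} factorisation that is available for the genuine measure $\theta$. Since $\theta$ is a Radon Gaussian measure on $U$, its covariance operator $R$ factors as $R=i\,i^\ast$ through the reproducing kernel (Cameron--Martin) Hilbert space $H$ of $\theta$, where $i\colon H\to U$ denotes the continuous inclusion and $i^\ast\colon U^\ast\to H$ its adjoint; in particular $\scapro{u^\ast}{Ru^\ast}=\norm{i^\ast u^\ast}_H^2$ for all $u^\ast\in U^\ast$, the range $i^\ast(U^\ast)$ is dense in $H$, and --- this is the decisive structural property of Radon Gaussian measures --- the inclusion $i$ is $\gamma$-radonifying.

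First I would build the Hilbert space attached to $\eta$. Polarising the quadratic form $q$ yields a symmetric, positive semidefinite bilinear form on $U^\ast$; completing $U^\ast$ modulo the kernel of the seminorm $u^\ast\mapsto q(u^\ast)^{1/2}$ produces a Hilbert space $H_\eta$ together with a canonical map $\kappa\colon U^\ast\to H_\eta$ satisfying $\norm{\kappa u^\ast}_{H_\eta}^2=q(u^\ast)$. The hypothesis $q(u^\ast)\le\scapro{u^\ast}{Ru^\ast}$ then reads $\norm{\kappa u^\ast}_{H_\eta}\le\norm{i^\ast u^\ast}_H$, so the assignment $i^\ast u^\ast\mapsto\kappa u^\ast$ is well defined and contractive on the dense subspace $i^\ast(U^\ast)\subseteq H$, and it extends uniquely to a contraction $C\colon H\to H_\eta$.

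Next I would set $j:=i\,C^\ast\colon H_\eta\to U$. Because $i$ is $\gamma$-radonifying and the class of $\gamma$-radonifying operators is an operator ideal, $j$ is again $\gamma$-radonifying. A direct computation gives $j^\ast=C\,i^\ast$, whence $j^\ast u^\ast=\kappa u^\ast$ and therefore $\scapro{u^\ast}{jj^\ast u^\ast}=\norm{\kappa u^\ast}_{H_\eta}^2=q(u^\ast)$ for every $u^\ast\in U^\ast$; that is, $jj^\ast$ realises the form $q$. Since $j$ is $\gamma$-radonifying, the image under $j$ of the canonical cylindrical Gaussian measure on $H_\eta$ is a genuine, centred, Radon Gaussian measure $\tilde\eta$ on $\Borel(U)$ with covariance operator $jj^\ast$, so its characteristic function is $u^\ast\mapsto\exp(-\tfrac12 q(u^\ast))$. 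This coincides with the characteristic function of the centred Gaussian cylindrical measure $\eta$; as a cylindrical measure is determined by its characteristic function, the restriction of $\tilde\eta$ to $\Z(U)$ equals $\eta$. Thus $\tilde\eta$ is the desired Gaussian extension.

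The routine parts are the completion producing $H_\eta$, the adjoint identity $j^\ast=C\,i^\ast$, and the comparison of characteristic functions. The genuine obstacle --- the only place where more than Hilbert-space bookkeeping is needed --- is the assertion that $j=i\,C^\ast$ is $\gamma$-radonifying. This rests on two facts from the theory of Gaussian measures on Banach spaces: that the Cameron--Martin inclusion of a Radon Gaussian measure is itself $\gamma$-radonifying, and that $\gamma$-radonifying operators form a two-sided operator ideal, so that precomposition with the bounded operator $C^\ast$ preserves the property. Once this is granted, the passage from the operator $j$ to the measure $\tilde\eta$ is the standard correspondence between $\gamma$-radonifying operators and Radon Gaussian measures.
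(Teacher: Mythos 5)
Your argument is correct. Note, however, that the paper does not prove this theorem at all: its ``proof'' consists of the single citation ``See Theorem 3.3.1 in Bogachev, \emph{Gaussian measures}'', so you have supplied a self-contained argument where the paper outsources one. Your route is the standard factorisation argument: write $R=i\,i^\ast$ through the Cameron--Martin space $H$ of $\theta$ with $i$ being $\gamma$-radonifying, use the domination $q(u^\ast)=\norm{\kappa u^\ast}_{H_\eta}^2\le\norm{i^\ast u^\ast}_H^2$ to produce a contraction $C\colon H\to H_\eta$ on the dense range $i^\ast(U^\ast)$, and then observe that $j=i\,C^\ast$ is again $\gamma$-radonifying by the (right) ideal property and satisfies $\scapro{u^\ast}{jj^\ast u^\ast}=q(u^\ast)$, so the image of the canonical Gaussian cylindrical measure of $H_\eta$ under $j$ is a Radon Gaussian measure whose characteristic function $\exp(-\tfrac12 q(u^\ast))$ agrees with that of $\eta$; since finite cylindrical measures are determined by their characteristic functions, this is the desired extension. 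All the individual steps check out: the bilinear form obtained by polarising $q$ is finite because the two-dimensional marginals of $\eta$ are Gaussian; $C$ is well defined because $\norm{i^\ast u^\ast}_H=0$ forces $q(u^\ast)=0$; the adjoint identity $j^\ast=C\,i^\ast$ is elementary Hilbert-space bookkeeping; and the two external inputs you flag (the Cameron--Martin inclusion of a Radon Gaussian measure on a separable Banach space is $\gamma$-radonifying, and $\gamma$-radonifying operators form an operator ideal) are both standard and correctly invoked. This is essentially the same circle of ideas as Bogachev's cited theorem, so nothing is lost relative to the paper; what your version buys is an explicit construction of the extension as $j(\gamma_{H_\eta})$, which makes transparent both why the extension is Gaussian and why its covariance operator is exactly $jj^\ast$ realising $q$.
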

\begin{proof}
  See Theorem 3.3.1 in \cite{Bogachev}.
\end{proof}
We extend this result to cylindrical L{\'e}vy measures by generalising Prok\-horov's theorem on projective limits (\cite[Th.9.12.2]{Bogachev-Measure} or \cite[Th.VI.3.2]{Vaketal}) to $\sigma$-finite measures. We follow here the proof of \cite{FremlinGarlingHaydon} in the form as nicely rewritten in \cite{Bogachev}.

\begin{theorem}\label{th.Levy-dominated}
  Let $\mu$ be a cylindrical L{\'e}vy measure on $\Z(U)$ and $\nu$ be a L{\'e}vy measure on $\Borel(U)$ satisfying $\mu\le \nu$ on
  $\Z(U)$. Then $\mu$ extends to a $\sigma$-finite measure on $\Borel(U)$ and the extension is a L{\'e}vy measure.
\end{theorem}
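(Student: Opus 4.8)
The plan is to reduce the statement to the classical, finite-measure version of Prokhorov's projective limit theorem, which already underlies the Gaussian case in Theorem \ref{th.Gauss-dominated}, and to obtain the necessary tightness from the domination $\mu\le\nu$. Since $U$ is separable, there is a norming sequence $(u_k^\ast)_{k\in\N}\subseteq U^\ast$ with $\norm{u_k^\ast}\le 1$ and $\norm{u}=\sup_k \abs{\scapro{u}{u_k^\ast}}$ for all $u\in U$. For $k,n\in\N$ the sets $C_{k,n}:=\{u\in U:\, \abs{\scapro{u}{u_k^\ast}}>\tfrac1n\}$ are cylindrical, and condition (i) in the definition of a L\'evy measure, applied to $\nu$, forces $\nu(C_{k,n})\le n^2\int_U(\scapro{u}{u_k^\ast}^2\wedge 1)\,\nu(du)<\infty$, hence $\mu(C_{k,n})\le\nu(C_{k,n})<\infty$. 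Because $(u_k^\ast)$ is norming, $\bigcup_{k,n}C_{k,n}=U\setminus\{0\}$, so after enumerating and taking finite unions I obtain an increasing sequence of cylindrical sets $E_N$ with $\bigcup_N E_N=U\setminus\{0\}$ and $\mu(E_N)\le\nu(E_N)<\infty$. Writing $F_N:=E_N\setminus E_{N-1}$ gives disjoint cylindrical sets on which $\mu$ has finite total mass.

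First I would treat each piece separately. For fixed $N$ the set function $\mu_N(\cdot):=\mu(\,\cdot\cap F_N)$ is a finite cylindrical measure on $\Z(U)$, dominated on $\Z(U)$ by the finite Borel measure $\nu_N(\cdot):=\nu(\,\cdot\cap F_N)$. Since $\nu_N$ is a finite Borel measure on the separable Banach space $U$ it is Radon, so for every $\epsilon>0$ there is a compact set $K\subseteq U$ with $\nu_N(U\setminus K)<\epsilon$. For any cylindrical set $C$ with $K\subseteq C$ one has $U\setminus C\subseteq U\setminus K$ and therefore $\mu_N(U\setminus C)\le\nu_N(U\setminus C)\le\nu_N(U\setminus K)<\epsilon$. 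This is precisely the tightness hypothesis of Prokhorov's projective limit theorem (\cite[Th.9.12.2]{Bogachev-Measure}, \cite[Th.VI.3.2]{Vaketal}), so $\mu_N$ extends to a finite Radon measure $\lambda_N$ on $\Borel(U)$ whose finite-dimensional projections coincide with those of $\mu_N$; by construction $\lambda_N\le\nu_N$. This step is the exact analogue of the mechanism behind Theorem \ref{th.Gauss-dominated}.

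It then remains to reassemble the pieces and to check that the result is a L\'evy measure. I would set $\tilde\mu:=\sum_{N}\lambda_N$ on $\Borel(U)$; this is $\sigma$-finite since each $\lambda_N$ is finite, and $\tilde\mu\le\nu$. For every $C\in\Z(U)$ the finite-dimensional images match, giving $\tilde\mu(C)=\sum_N\lambda_N(C)=\sum_N\mu(C\cap F_N)$, and because $\tilde\mu\le\nu$ with $\nu(\{0\})=0$ the extension puts no mass at the origin, so $\tilde\mu$ indeed restricts to $\mu$ on $\Z(U)$. Finally, property (i) for $\tilde\mu$ is inherited from $\nu$ via $\tilde\mu\le\nu$, and property (ii) follows from the domination together with the fact that a Borel measure dominated by a L\'evy measure is again a L\'evy measure, the convolution factors of the Radon measure associated with $\nu$ remaining tight; hence $\tilde\mu$ is a L\'evy measure.

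I expect the genuine difficulty to lie in the passage from finitely additive cylindrical data to a countably additive measure, i.e.\ in the tightness-and-extension step, and in carrying it out uniformly enough that the pieces $\lambda_N$ glue to a single $\sigma$-finite measure rather than merely extending each $F_N$ in isolation; this is exactly the point where Prokhorov's theorem must be pushed from finite to $\sigma$-finite measures along the lines of \cite{FremlinGarlingHaydon} and \cite{Bogachev}. A secondary technical nuisance is the behaviour at $\{0\}$: the finite-dimensional images $\mu\circ\pi_{u^\ast_1,\dots,u^\ast_m}^{-1}$ may carry an atom at the origin of $\R^m$, and one has to make sure that removing the single Banach-space point $0$ in the exhaustion does not alter these projections, which is guaranteed by $\mu\le\nu$ and $\nu(\{0\})=0$.
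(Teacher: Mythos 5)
Your decomposition into disjoint cylindrical pieces $F_N$ of finite mass and the extension of each restricted cylindrical measure $\mu_N=\mu(\,\cdot\cap F_N)$ by the classical finite-measure version of Prokhorov's theorem is correct, and it is genuinely simpler than the paper's treatment of the individual pieces: the paper instead works with the finite-dimensional images $\mu\circ\pi_k^{-1}$ restricted outside the rectangles $R_\delta^k$ and must run a projective-limit compactness argument (the nested closed sets $S_\delta^k$ inside a weakly compact family of measures) to produce, for each $\delta$, a single measure matching all of these projections simultaneously. Your tightness verification for each $\mu_N$, via the Radon property of the finite Borel measure $\nu_N$ and the domination $\mu_N\le\nu_N$ on $\Z(U)$, is exactly right, as is the final appeal to the fact that a Borel measure dominated by a L\'evy measure is a L\'evy measure (Proposition 5.4.5 in Linde, which the paper uses for the same purpose).

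The gap is in the reassembly. Setting $\tilde\mu=\sum_N\lambda_N$ you need $\tilde\mu(C)=\sum_N\mu(C\cap F_N)=\mu(C)$ for every $C\in\Z(U)$. Finite additivity inside a suitable $\Cc(U,\Gamma)$ only gives $\mu(C)=\sum_{M\le N}\mu(C\cap F_M)+\mu(C\cap E_N^c)$, hence $\tilde\mu(C)\le\mu(C)$; the missing statement is that the defect $\lim_{N\to\infty}\mu(C\cap E_N^c)$ vanishes. This is a countable-additivity assertion along a sequence of sets that runs through infinitely many of the algebras $\Cc(U,\Gamma)$, and it does not follow from the definition of a cylindrical measure --- it is essentially the content of the theorem. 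Your justification, that $\tilde\mu\le\nu$ and $\nu(\{0\})=0$, does not close it: $E_N^c$ contains a norm ball around the origin, and a L\'evy measure typically has infinite mass on every such ball, so $\nu(C\cap E_N^c)=\infty$ for all $N$ whenever $C$ meets a neighbourhood of $0$ of infinite $\nu$-mass; continuity from above of $\nu$ therefore cannot be invoked, and the domination gives no control of the defect in the limit. This is precisely where the paper's construction does additional work: it arranges that the extension agrees with $\mu\circ\pi_k^{-1}$ exactly on all of $\Borel(\R^k\setminus R_\delta^k)$ for every $k$ simultaneously, so that agreement on cylindrical sets bounded away from the origin holds without any limiting procedure along an exhaustion, and only then glues over norm annuli. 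To repair your argument you would have to show separately that $\mu$ leaves no mass ``at the weak origin'', i.e.\ that $\inf_N\mu(C\cap E_N^c)=0$, and this requires exploiting the tightness of $\nu$ restricted to complements of norm balls (the compact sets $K_n$ in the paper's proof), not merely $\nu(\{0\})=0$.
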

\begin{proof}
Fix a $\delta>0$ and define for each  $d\in\N$  the rectangle
\begin{align*}
  R_\delta^d:=\Big\{( \beta_1,\dots,\beta_d)\in\R^d:\, \sup_{i=1,\dots, d}\abs{\beta_i}\le \delta\Big\},
\end{align*}
and $B_\delta:=\{u\in U:\, \norm{u}\le \delta\}$.
Since $U$ is separable we can choose a norming sequence $\{u_k^\ast\}_{k\in\N}\subseteq U^\ast$ with $\norm{u_k^\ast}=1$, i.e.
\begin{align*}
  \norm{u}=\sup_{k\in\N}\abs{\scapro{u}{u_k^\ast}} \qquad\text{for all }u\in U.
\end{align*}
We define for every $k\in\N$ the mapping
$  \pi_{k}\colon U\to\R^k$ by $\pi_k(u):=\big(\scapro{u}{u_1^\ast}, \dots, \scapro{u}{u_k^\ast}\big)$,
and the finite measure
\begin{align*}
  \mu_\delta^k:=\mu\circ \pi_k^{-1}|_{\R^k\setminus R_\delta^k}
  \quad\text{on } \Borel(\R^k\setminus R_\delta^k).
\end{align*}
Since $\nu$ is a L{\'e}vy measure there exists an increasing sequence $\{K_n\}_{n\in\N}$ of compact sets $K_n\subseteq U$ (see \cite[Th.5.4.8]{Linde}), such that
\begin{align*}
  \nu\Big(\big\{u\in U:\, u\in K_n^c, \, \norm{u}>\delta\big\}\Big)\le \frac{1}{n}
  \quad\text{for all } n\in\N.
\end{align*}
By denoting the constant $c_\delta:=\nu(B_\delta^c)$ define the set
\begin{align*}
  S_\delta:=\Big\{\theta\in M(U):\, \theta(U)\le c_\delta, \,\theta(K_n^c)\le \frac{1}{n}\quad\text{for all }n\in\N\Big\}.
\end{align*}
Obviously, the set $S_\delta$ is non-empty and relatively compact in $M(U)$ by Prohorov's Theorem; see
\cite[Th.I.3.6]{Vaketal}. Furthermore, for each  $k$, $n\in\N$  we obtain
\begin{align*}
\mu^k_\delta\left(\big(\pi_k(K_n)\big)^c\right)&=
\big(\mu\circ \pi_k^{-1}\big)\left((\pi_k(K_n))^c\setminus R_\delta^k\right)\\
&\le \big(\nu\circ \pi_k^{-1}\big)\left((\pi_k(K_n))^c\setminus R_\delta^k\right)\\
& =\nu\Big(\big\{u\in U:\,u\in K_n^c,\, \sup_{i=1,\dots, k}\abs{\scapro{u}{u_i^\ast}}>\delta\big\}\Big)\\
   & \le\nu\Big( \{u\in U:\,u\in K_n^c,\, \norm{u}>\delta\}\Big)\\
  &\le \frac{1}{n}.
\end{align*}
Theorem 9.1.9 in \cite{Bogachev-Measure} implies that for each $k\in\N$ there exists a measure $\theta_\delta^k\in M(U)$
such that
\begin{align*}
  \theta_\delta^k \circ \pi_{k}^{-1}=\mu_\delta^k \quad\text{on }\Borel(\R^k\setminus R_\delta^k),
\end{align*}
with $\theta_\delta^k(U)= \mu_\delta^k (\R^k\setminus R_\delta^k)$ and $\theta_\delta^k(K_n^c)\le \frac{1}{n}$ for all $n\in\N$.
Since $\mu_\delta^k (\R^k\setminus R_\delta^k)\le c_\delta$, the set
\begin{align*}
  S_\delta^k:=\big\{\theta\in \bar{S}_\delta:\, \theta\circ \pi_k^{-1}=\mu_\delta^k \text{ on } \Borel(\R^k\setminus R_\delta^k)\big\}
\end{align*}
is non-empty. For $k\le \ell$ denote by $\pi_{k\ell}$ the natural projection from $\R^\ell$ to $\R^k$. Since $\pi_{k\ell}^{-1}(\R^k\setminus R_\delta^k)\subseteq \R^\ell\setminus R_\delta^\ell$ we obtain for $\theta\in S_\delta^\ell$ that
\begin{align*}
\theta\circ \pi_k^{-1}=(\theta\circ \pi_\ell^{-1})\circ \pi_{k\ell}^{-1}
=\mu_\delta^\ell\circ \pi_{k\ell}^{-1} = (\mu\circ \pi_\ell^{-1})\circ \pi_{k\ell}^{-1}=\mu\circ \pi_k^{-1}
\qquad\text{on } \Borel(\R^k\setminus R_\delta^k),
\end{align*}
which shows $S_\delta^\ell\subseteq S_\delta^k$. Since $\bar{S}_\delta$ is compact and $S_\delta^k$ is closed for all $k\in\N$,  the nested system $\{S_\delta^k:\, k\in\N\}$ has a non-empty intersection. Thus, for each $\delta>0$ there exists a measure $\theta_\delta\in M(U)$ satisfying
\begin{align*}
  \theta_\delta\circ \pi_k^{-1}=\mu\circ \pi_k^{-1} \quad\text{on } \Borel(\R^k\setminus R_\delta^k)\quad\text{for all }k\in\N.
\end{align*}
The measure $\theta_\delta$ is uniquely determined on
$\Borel(U)\cap B_\delta^c$ since the family of sets
\begin{align*}
  \{Z\in \Z(U):\, Z=\pi_k^{-1}(B\cap \R^k\setminus R_\delta^k)\text{ for } B\in \Borel(\R^k),\, k\in \N\}
\end{align*}
generates the $\sigma$-algebra $\Borel(U)\cap B_\delta^c$ and it is closed under intersection.
Define the measure $\tilde{\theta}_{1/k}$ to be the restriction of $\theta_{1/k}$ on the disc
$\{u\in U: \tfrac{1}{k}<\norm{u}\le \tfrac{1}{k-1}\}$ for $k\ge 2$  and $\tilde{\theta}_1$ to be the restriction of $\theta_1$ to $B_1^c$. Then
\begin{align*}
  \theta:=\sum_{k=1}^\infty \tilde{\theta}_{1/k}
\end{align*}
defines a $\sigma$-finite measure $\theta$ on $\Borel(U)$ satisfying $\theta=\mu$ on $\Z(U)$.
Since $\theta\le \nu $ on $\Z(U)$ and $\Z(U)$ is a generator of $\Borel(U)$ closed under intersection,
we have $\theta\le \nu$ on $\Borel(U)$. Proposition 5.4.5 in  \cite{Linde} guarantees that $\theta$ is a L{\'e}vy measure.
\end{proof}

\section{Examples of cylindrical L{\'e}vy processes}

\begin{example}\label{ex.genuineLevy}
If $(Y(t):\,t\ge 0)$ is a genuine L{\'e}vy process with values in a Banach space $U$, then, for $t\ge 0$,
\begin{align*}
  L(t)\colon U^\ast\to L_P^0(\Omega;\R), \qquad
   L(t)u^\ast=\scapro{Y(t)}{u^\ast}
\end{align*}
defines a cylindrical L{\'e}vy process in $U$. If $(b,R,\nu)$ is the characteristics of $Y$, then the cylindrical characteristics $(a,Q,\mu)$ of $L$ is given by
\begin{align*}
a(u^\ast)= \scapro{b}{u^\ast}+\int_U \scapro{u}{u^\ast}\big( \1_{B_{\R}}(\scapro{u}{u^\ast})-\1_{B_U}(u)\big)\, \nu(du),\qquad
Q=R, \qquad \mu=\nu.
\end{align*}
The existence of the integral is derived in  Lemma \ref{le.correction-term}.

The asymmetry of the classical characteristics and the cylindrical characteristics of $Y$ is due to the fact, that in the cylindrical perspective the entry $\mu$ is only a cylindrical measure and therefore, the truncation function $u\mapsto \1_{B_U}(u)$ cannot be integrated with respect to $\mu$. A more illustrative reason is that a classical L{\'e}vy process obviously has jumps in the underlying Banach space $U$, whereas it is not clear in which space the jumps of a cylindrical L{\'e}vy process occur.
\end{example}

An appealing way to construct a cylindrical L{\'e}vy process is by a series of
real valued L{\'e}vy processes. We denote here by $\ell^p(\R)$ for $p\in [1,\infty]$ the spaces of real valued sequences.
\begin{lemma}\label{le.weakconvsum}
Let $U$ be a Hilbert space with an orthonormal basis $(e_k)_{k\in\N}$
and let $(\ell_k)_{k\in\N}$ be a sequence of independent, real valued L{\'e}vy processes with characteristics $(b_k,r_k,\nu_k)$ for $k\in\N$.
Then the following are equivalent:
\begin{enumerate}
  \item[\rm{(a)}]
  For each $(\alpha_k)_{k\in\N}\in \ell^2(\R)$ we have
\begin{enumerate}
  \item[{\rm (i)}] $\displaystyle\sum_{k=1}^\infty \1_{B_{\R}}(\alpha_k) \abs{\alpha_k}\abs{b_k+ \int_{1< \abs{\beta}\le \abs{\alpha_k}^{-1}} \beta \,\nu_k(d\beta)}<\infty;$
\item[{\rm (ii)}] $ (r_k)_{k\in\N} \in \ell^\infty(\R);$
\item[{\rm (iii)}] $\displaystyle\sum_{k=1}^\infty\int_{\R}\left(\abs{\alpha_k \beta}^2\wedge 1\right)\,\nu_k(d\beta)<\infty$.
\end{enumerate}
\item[{\rm (b)}] For each $t\ge 0$ and $u^\ast\in U^\ast$ the sum
 \begin{align}\label{eq.weakconvsum}
L(t)u^\ast:= \sum_{k=1}^\infty \scapro{e_k}{u^\ast} \ell_k(t)
\end{align}
converges $P$-a.s.
\end{enumerate}
If in this case the set $\{\phi_{\ell_k(1)}:k\in\N\}$ is equicontinuous at 0, then
$(L(t):\,t\ge 0)$ defines a cylindrical L{\'e}vy process in $U$ with cylindrical characteristics $(a,Q,\mu)$ obeying
\begin{align*}
  a(u^\ast)&= \sum_{k=1}^\infty \scapro{e_k}{u^\ast}\left( b_k +\int_{\R} \beta \big(\1_{B_{\R}}(\scapro{e_k}{u^\ast} \beta)- \1_{B_{\R}}(\beta)\big)\,\nu_k(d\beta)\right),\\
 Qu^\ast &= \sum_{k=1}^\infty \scapro{e_k}{u^\ast} r_ke_k,
 \qquad \big(\mu\circ (u^\ast)^{-1}\big)(d\beta)
 = \sum_{k=1}^\infty \left(\nu_k\circ m_k(u^\ast)^{-1}\right)(d\beta),
\end{align*}
for each $u^\ast\in U^\ast$, where  $m_k(u^\ast)\colon \R\to\R$ is defined by $m_{k}(u^\ast)(\beta)=\scapro{e_k}{u^\ast} \beta$.
\end{lemma}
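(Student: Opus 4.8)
The plan is to pass to characteristic functions and reduce the almost sure convergence in (b) to the convergence of a single series of one-dimensional L\'evy symbols, which then splits into the three conditions of (a). Since $U$ is a Hilbert space I identify $U^\ast$ with $U$, so that each $u^\ast$ corresponds to its coordinate sequence $\alpha_k:=\scapro{e_k}{u^\ast}\in\ell^2(\R)$ and, conversely, every $(\alpha_k)\in\ell^2(\R)$ arises this way. Writing $\psi_k$ for the L\'evy symbol of $\ell_k$ (so $E[e^{is\ell_k(t)}]=e^{t\psi_k(s)}$), independence of the $\ell_k$ gives that the partial sum $S_n(t):=\sum_{k=1}^n\scapro{e_k}{u^\ast}\ell_k(t)$ has characteristic function $s\mapsto\exp\big(t\sum_{k=1}^n\psi_k(\alpha_k s)\big)$. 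As the summands are independent, the It\^o--Nisio theorem (here in its classical one-dimensional form) together with L\'evy's continuity theorem shows that $S_n(t)$ converges $P$-a.s.\ if and only if $\sum_{k=1}^n\psi_k(\alpha_k s)$ converges for every $s\in\R$ to a limit continuous at $s=0$ (this last continuity I would verify by dominated convergence from the summable bounds obtained below). Hence (b) is equivalent to the convergence of $\sum_k\psi_k(\alpha_k s)$ for every $(\alpha_k)\in\ell^2(\R)$ and every $s\in\R$, and the lemma reduces to matching this with (i)--(iii).

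Using the L\'evy--Khintchine form of $\psi_k$, I would split $\sum_k\psi_k(\alpha_k s)$ into its Gaussian part $-\tfrac12 s^2\sum_k r_k\alpha_k^2$, its jump real part $-\sum_k\int_\R(1-\cos(\alpha_k s\beta))\,\nu_k(d\beta)$, and its imaginary drift part. The first two series have nonnegative terms, so convergence of the real part is equivalent to the finiteness of both, for all $(\alpha_k)$ and $s$. The Gaussian series is finite for all $(\alpha_k)\in\ell^2(\R)$ exactly when $(r_k)\in\ell^\infty(\R)$, which is (ii), by $\ell^1$--$\ell^\infty$ duality (testing against $(\alpha_k^2)\in\ell^1$). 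For the jump part I would use $1-\cos x\ge\tfrac13 x^2$ on $\abs{x}\le1$ to recover the near region $\abs{\alpha_k\beta}\le1$ of (iii) at $s=1$; the far region $\sum_k\nu_k(\abs{\alpha_k\beta}>1)$ is the delicate point, treated either by averaging over $s\in[0,1]$ and using $1-\tfrac{\sin y}{y}\ge c(\abs{y}^2\wedge1)$, or by the necessity part of Kolmogorov's three-series theorem combined with the standard lower bound $P(\abs{\ell_k(t)}>x)\ge c\,t\,\nu_k(\abs{\beta}>2x)$ and Borel--Cantelli. The reverse implication, (iii)$\Rightarrow$ convergence of the jump part, is immediate from $\abs{1-\cos x}\le 2(\abs{x}^2\wedge1)$.

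The delicate bookkeeping, and the step I expect to be the main obstacle, is the imaginary drift part, which I would handle by re-truncating: write $\sin(\alpha_k s\beta)-\alpha_k s\beta\1_{B_\R}(\beta)$ as $\big(\sin(\alpha_k s\beta)-\alpha_k s\beta\1_{B_\R}(\alpha_k s\beta)\big)+\alpha_k s\beta\big(\1_{B_\R}(\alpha_k s\beta)-\1_{B_\R}(\beta)\big)$. The first bracket is $O(\abs{\alpha_k s\beta}^2\wedge1)$ and sums by (iii); since $\alpha_k\to0$, the second contributes, for large $k$, the term $\alpha_k s\int_{1<\abs{\beta}\le\abs{\alpha_k s}^{-1}}\beta\,\nu_k(d\beta)$. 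Thus, modulo the jump part, convergence of the imaginary part is convergence of $s\sum_k\alpha_k\big(b_k+\int_{1<\abs{\beta}\le\abs{\alpha_k s}^{-1}}\beta\,\nu_k(d\beta)\big)$, while (i) asks for the \emph{absolute} convergence of this series at $s=1$. The upgrade from conditional to absolute convergence is obtained by replacing $(\alpha_k)$ with $(\epsilon_k\alpha_k)$ for arbitrary signs $\epsilon_k\in\{-1,1\}$: this stays in $\ell^2(\R)$ and leaves the truncation level $\abs{\alpha_k}^{-1}$ unchanged while flipping the sign of the $k$-th term, so convergence for all sign patterns forces absolute convergence. Because (a) and (b) both quantify over all of $\ell^2(\R)$, this sign flip, together with the harmless rescaling by $s$, is exactly what makes (i) equivalent to the drift convergence and is the crux of the argument.

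Finally, assuming $\{\phi_{\ell_k(1)}:k\in\N\}$ equicontinuous at $0$, I would verify that $L$ is a cylindrical L\'evy process: linearity of each $L(t)$ is clear, continuity of $u^\ast\mapsto L(t)u^\ast$ follows from the equicontinuity (which renders the limiting symbol continuous and controls the convergence in probability), and the finite-dimensional projections $(L(t)u_1^\ast,\dots,L(t)u_n^\ast)$ inherit independent and stationary increments and stochastic continuity from the $\ell_k$ and their mutual independence. The symbol is then $\Psi(u^\ast)=\sum_k\psi_k(\scapro{e_k}{u^\ast})$, and rewriting it in the cylindrical L\'evy--Khintchine form \eqref{eq.charcylLevy} with truncation $\1_{B_\R}(\scapro{u}{u^\ast})$ reads off $Q$ and $\mu$ as stated, where $\mu$ is specified through its one-dimensional projections by the pushforwards $\nu_k\circ m_k(u^\ast)^{-1}$ whose cylindrical L\'evy property is precisely (iii); the mismatch between the truncations $\1_{B_\R}(\beta)$ in $\psi_k$ and $\1_{B_\R}(\scapro{e_k}{u^\ast}\beta)$ in the cylindrical form is absorbed into the drift and produces exactly the stated correction term in $a$.
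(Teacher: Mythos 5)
Your proposal is correct and it shares the two genuinely non-routine ingredients of the paper's proof: the re-truncation identity that produces the correction term $\int_{\R}\beta\big(\1_{B_{\R}}(\alpha_k\beta)-\1_{B_{\R}}(\beta)\big)\,\nu_k(d\beta)$ in the drift, and the sign-flip argument ($\alpha_k\mapsto\epsilon_k\alpha_k$ stays in $\ell^2(\R)$ and leaves the bracket unchanged) that upgrades conditional convergence of the drift series to the absolute convergence demanded in (i). The technical route is nonetheless different. The paper computes the characteristics $(b^{(n)},r^{(n)},\nu^{(n)})$ of the partial sums via \cite[Pro.11.10]{Sato} and then invokes Theorem VII.2.9 and Remark VII.2.10 of \cite{JacodShiryaev} as a black box: these translate weak (hence, by independence of the summands, $P$-a.s.) convergence of $S_n(t)$ into convergence of $b^{(n)}$ together with weak convergence of the finite measures $r^{(n)}\delta_0(d\beta)+\big(\abs{\beta}^2\wedge 1\big)\nu^{(n)}(d\beta)$, from which (ii) and (iii) drop out by testing against $f\equiv 1$. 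You instead reprove the relevant direction of that convergence theorem by hand: splitting the symbol into Gaussian, real jump and imaginary drift parts, extracting (ii) by $\ell^1$--$\ell^\infty$ duality, recovering the near region of (iii) from $1-\cos x\ge\tfrac{1}{3}x^2$ and the far region by averaging in the frequency variable (here you should make explicit that the uniform bound on $\sum_k\int_{\R}\big(1-\cos(\alpha_k s\beta)\big)\,\nu_k(d\beta)$ for small $s$, needed to justify the averaging, comes from the continuity and non-vanishing of the limiting, infinitely divisible, characteristic function), and passing from convergence of $\exp\big(t\Psi_n(s)\big)$ to convergence of $\Psi_n(s)$ via the distinguished logarithm. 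The paper's route buys brevity and a clean separation of the probabilistic input from the bookkeeping; yours buys self-containedness and makes visible exactly which part of the symbol each of (i)--(iii) controls. The concluding identification of $(a,Q,\mu)$ and the verification that $L$ is a cylindrical L\'evy process (linearity, continuity from equicontinuity of $\{\phi_{\ell_k(1)}:k\in\N\}$, and the reduction to finite-dimensional projections) are handled in essentially the same way in both arguments.
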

\begin{proof}
Define for an arbitrary sequence  $(\alpha_k)_{k\in\N}\subseteq \R$ and $n\in\N$ the partial sum
\begin{align}\label{eq.partsum}
  S_n(t):=\sum_{k=1}^n \alpha_k \ell_k(t)\qquad\text{for all }t\ge 0.
\end{align}
It follows by using \cite[Pro.11.10]{Sato}, that $(S_n(t):\,t\ge 0)$ is a L{\'e}vy process with characteristics
\begin{align*}
  b^{(n)}:=\sum_{k=1}^n b_k^\prime,\qquad
  r^{(n)}:=\sum_{k=1}^n \alpha_k^2 r_k,\quad
  \nu^{(n)}(d\beta):=\sum_{k=1}^n \left(\nu_k\circ m_{\alpha_k}^{-1}\right)(d\beta),
\end{align*}
where  $m_{\alpha_k}\colon \R\to\R$, $m_{\alpha_k}(\beta)=\alpha_k \beta$ and the reals $b_k^\prime$ are defined by
\begin{align*}
  b_k^\prime := \alpha_k b_k +\int_{\R}\alpha_k \beta \big(\1_{B_{\R}}(\alpha_k \beta)- \1_{B_{\R}}(\beta)\big)\,\nu_k(d\beta).
\end{align*}

For establishing the implication (a) $\Rightarrow$ (b) fix $u^\ast\in U^\ast$ and
set $\alpha_k:=\scapro{e_k}{u^\ast}$. Due to Condition~(i) there  exists $b\in\R$ such that
$  \lim_{n\to\infty} b^{(n)}=b$. Conditions (ii) and (iii) guarantee that for every continuous and bounded function $f\colon \R\to\R$ we have
\begin{align*}
&  \int_{\R} f(\beta) \left(r^{(n)}\delta_0(d\beta)+ \left(\abs{\beta}^2\wedge 1\right)\, \nu^{(n)}(d\beta)\right)\notag\\
&\qquad = f(0)\sum_{k=1}^n \alpha_k^2 r_k
  + \sum_{k=1}^n \int_{\R} f(\alpha_k\beta)\left(\abs{\alpha_k\beta}^2\wedge 1\right)\, \nu_k(d\beta)\notag\\
&\qquad \to f(0)\sum_{k=1}^\infty \alpha_k^2 r_k
  + \sum_{k=1}^\infty \int_{\R} f(\alpha_k\beta)\left(\abs{\alpha_k\beta}^2\wedge 1\right)\, \nu_k(d\beta) \qquad\text{as }n\to\infty.
\end{align*}
Theorem  VII.2.9 and Remark VII.2.10 in \cite{JacodShiryaev} 
imply that the sum $S_n(t)$ converges weakly and therefore $P$-a.s.
 to an infinitely divisible random variable $L(t)u^\ast$  for each $t\ge 0$.

Conversely, it follows from (b) that the sum $S_n(1)$, defined in \eqref{eq.partsum}, converges weakly for every $(\alpha_k)_{k\in\N}\in\ell^2(\R)$.  Theorem  VII.2.9 in \cite{JacodShiryaev} implies that $b^{(n)}$ converges as $n\to\infty$, i.e.
\begin{align*}
\abs{\sum_{k=1}^\infty  \alpha_k \left(b_k+ \int_{\R} \beta \big(\1_{B_{\R}}(\alpha_k \beta)- \1_{B_{\R}}(\beta)\big)\,\nu_k(d\beta)\right)}<\infty
\end{align*}
for every $(\alpha_k)_{k\in\N}\in \ell^2(\R)$. Since for each $k\in\N$ the term in the bracket does not depend on the sign of $\alpha_k$ we can choose $\alpha_k$ such that
each summand is positive and we obtain
\begin{align*}
\sum_{k=1}^\infty  \abs{\alpha_k} \abs{b_k+ \int_{\R} \beta \big(\1_{B_{\R}}(\alpha_k \beta)- \1_{B_{\R}}(\beta)\big)\,\nu_k(d\beta)}<\infty,
\end{align*}
which yields Condition (i) as $\abs{\alpha_k}\le 1$ for sufficiently large $k$.
 Remark VII.2.10 in \cite{JacodShiryaev}  implies that for every continuous, bounded function $f\colon \R\to\R$ the sum
\begin{align*}
\sum_{k=1}^n \left(\alpha_k^2 r_k f(0)
  + \int_{\R} f(\alpha_k\beta)\left(\abs{\alpha_k\beta}^2\wedge 1\right)\, \nu_k(d\beta)\right)
\end{align*}
converges as $n\to\infty$. Since we can assume that $r_k\ge 0$ for every $k\in\N$, Conditions (ii) and (iii) are implied by choosing $f(\cdot)=1$, which completes the proof of the equivalence (a) $\Leftrightarrow$ (b).

Clearly, $L(t)\colon U^\ast\to L^0_P(\Omega;\R)$ is linear. If a sequence $(u_n^\ast)_{n\in\N}\subseteq U^\ast$ converges to 0 then $\scapro{e_k}{u_n^\ast}\to 0$ as $n\to\infty$ uniformly in $k\in\N$.
The equicontinuity of $\{\phi_{\ell_k(1)}:k\in\N\}$ implies for each $t\ge 0$ that
$\phi_{\ell_k(t)}( \scapro{e_k}{u_n^\ast})\to 1$ for $n\to\infty$  uniformly in $k\in\N$.
Thus, we obtain
\begin{align*}
 \lim_{n\to\infty}\phi_{L(t)}(u_n^\ast)
  =\lim_{n\to\infty} \prod_{k=1}^\infty \phi_{\ell_k(t)}(\scapro{e_k}{u_n^\ast})
 = \prod_{k=1}^\infty \lim_{n\to\infty}\phi_{\ell_k(t)}(\scapro{e_k}{u_n^\ast})
  =1,
\end{align*}
which shows the continuity of $L(t)$.
Finally, $L$ has weakly independent increments, that is the random variables
\begin{align*}
  \big(L(t_1)- L(t_0)\big)u_1^\ast, \dots, \big(L(t_n)- L(t_{n-1})\big)u_{n}^\ast
\end{align*}
are independent for every $0\le t_0\le \cdots \le t_n$, $u_1^\ast, \dots, u_n^\ast\in U^\ast$ and $n\in\N$. Since  $(L(t)u^\ast:\,t\ge 0)$ is
a real valued L{\'e}vy process for each $u^\ast\in U^\ast$ it follows from \cite[Le.3.8]{DaveMarkus} that $L$ is a cylindrical L{\'e}vy process in $U$.
\end{proof}

The convergence \eqref{eq.weakconvsum} is called the {\em weakly $P$-a.s.\ convergence} of the sum $\sum e_k \ell_k(t)$ for each $t\ge 0$. If there exists a random variable $Y(t)\colon \Omega\to U$ for each $t\ge 0$ such that
\begin{align*}
  Y(t)=\sum_{k=1}^\infty e_k \ell_k(t) \qquad\text{$P$-a.s. in $U$, }
\end{align*}
then the sum is called {\em strongly $P$-a.s.convergent}. Obviously, in this case, we have
$L(t)u^\ast=\scapro{Y(t)}{u^\ast}$ for every $u^\ast\in U^\ast$ and $t\ge 0$, and $(Y(t):\,t\ge 0)$ is a $U$-valued L{\'e}vy process. One can easily show a similar result to Lemma \ref{le.weakconvsum} but we skip this; a special case can be found in \cite[Th.4.13]{PeszatZab}

\begin{example}\label{ex.cylWiener}
 Let $\ell_k$ be defined by $\ell_k(\cdot):=\sigma_k w_k(\cdot)$ where $(\sigma_k)_{k\in\N}\subseteq \R$ and
$(w_k)_{k\in\N}$ is a sequence of independent, real valued standard Brownian motions. Then the series in \eqref{eq.weakconvsum} defines
a cylindrical L{\'e}vy process $L$ if and only if $(\sigma_k)_{k\in\N}\in \ell^\infty$. In this case $L$ is called a {\em cylindrical Wiener process}. Its covariance operator $Q$ is given by
\begin{align*}
  Q\colon U^\ast\to U, \qquad Qu^\ast=\sum_{k=1}^\infty \sigma_k^2 \scapro{e_k}{u^\ast}e_k.
\end{align*}
This definition of a cylindrical Wiener process is consistent with other definitions which can be found in the literature, see \cite{Riedle10}.
\end{example}

\begin{example}\label{ex.Poisson}
For a sequence $(h_k)_{k\in\N}$  of independent, real valued Poisson processes with intensity $1$
and a sequence $\sigma:=(\sigma_k)_{k\in\N}\subseteq \R$ we define $\ell_k(\cdot):=\sigma_kh_k(\cdot)$. In this case, the sum \eqref{eq.weakconvsum} defines a cylindrical L{\'e}vy process if and only if $\sigma\in \ell^2$. The sum converges strongly if and only if $\sigma\in \ell^1$. If
  $(h_k)_{k\in\N}$ is a sequence of independent, real valued compensated Poisson processes with intensity $1$,  then the sum converges weakly if and only if $\sigma\in \ell^\infty$ and strongly if and only if $\sigma\in \ell^2$.
\end{example}

\begin{example}\label{ex.PriolaZab}
  Let $(h_k)_{k\in\N}$ be a family of independent, identically distributed, real valued, standardised, symmetric, $\alpha$-stable L{\'e}vy processes $h_k$ for $\alpha\in (0,2)$. Then
  the characteristics of $h_k$ is given by $(0,0,\rho)$ with L{\'e}vy measure
  $\rho(d\beta)=\tfrac{1}{2}\abs{\beta}^{-1-\alpha} d\beta$.
For a sequence $\sigma:=(\sigma_k)_{k\in\N}\subseteq \R$ define for each $k\in\N$ the L{\'e}vy process $\ell_k(\cdot):=\sigma_k h_k(\cdot)$. Then the characteristics of $\ell_k$ is given by $(0,0,\nu_k)$ with
  \begin{align}\label{eq.stable-one}
    \nu_k:=\rho\circ m_{\sigma_k}^{-1},
  \end{align}
where $m_{\alpha}\colon \R\to\R$ is defined by $m_{\alpha}\beta=\alpha \beta$ for some $\alpha\in\R$.  With this choice of
$(\ell_k)_{k\in\N}$ it follows by Lemma \ref{le.weakconvsum} that the sum \eqref{eq.weakconvsum} defines a cylindrical L{\'e}vy process if and only if
\begin{align*}
  \sum_{k=1}^\infty \int_{\R} \Big(\abs{\alpha_k\beta}^2\wedge 1\Big)\,\nu_k(d\beta)
  =\tfrac{2}{\alpha(2-\alpha)}\sum_{k=1}^\infty  \abs{\alpha_k\sigma_k}^\alpha <\infty
\end{align*}
for every $(\alpha_k)_{k\in\N}\in\ell^2$, which is equivalent to  $\sigma\in \ell^{(2\alpha)/(2-\alpha)}$. The cylindrical L{\'e}vy process is $U$-valued, i.e. the sum converges strongly, if and only if $\sigma\in \ell^{\alpha}$.
\end{example}

\begin{remark}\label{re.PriolaZab}
The publication \cite{PriolaZabczyk} treats the cylindrical L{\'e}vy process introduced in Example \ref{ex.PriolaZab}  and it is called {\em cylindrical stable noise}. This specific example of a cylindrical L{\'e}vy process appears also  in the publications \cite{Brzetal}, \cite{LiuZhai} and \cite{PeszatZab12}.
However, since the authors do not follow the cylindrical approach, they do not require that the sum \eqref{eq.weakconvsum} is finite, i.e. they do not impose any conditions on the sequence $\sigma$. Although this is  more general, it does not match the usual framework, if one understands cylindrical L{\'e}vy processes as a generalisation of cylindrical Wiener processes. All the different definitions of cylindrical Wiener processes, one can find in the literature,  have in common that the corresponding sum
of the form \eqref{eq.weakconvsum} converges weakly as in Example \ref{ex.cylWiener}.
\end{remark}

\begin{example}\label{ex.BrzezniakZabczyk}
In \cite{BrzZab10}, the authors construct a noise by subordination which they call {\em L{\'e}vy white noise}. In the following result we define this noise in our setting and derive its cylindrical characteristics. In contrast to the original source, our cylindrical approach enables us to introduce this noise without referring to any other space than the underlying Banach space $U$, which we consider to be more natural.
\begin{lemma}\label{le.BrzZab10}
If $W$ is a cylindrical Wiener process in $U$ with covariance operator $C$
and $\ell$ is an independent, real valued subordinator with characteristics $(\alpha,0,\rho)$ then
\begin{align}\label{eq.bspZ}
  L(t)u^\ast:= W(\ell(t))u^\ast\qquad\text{for all }u^\ast\in U^\ast,\, t\ge 0,
\end{align}
defines a cylindrical L{\'e}vy process $(L(t):\,t\ge 0)$  with characteristics
  $(0,Q,\mu)$\ given by
  \begin{align*}
    Q= \alpha C,\qquad
    \mu =(\gamma\otimes \rho)\circ \kappa^{-1},
  \end{align*}
where $\gamma$ is the canonical Gaussian cylindrical measure on the reproducing kernel Hilbert space $H_C$ of $C$ with embedding $i_C\colon H_C\to U$ and
\begin{align*}
   \kappa\colon H_C\times \Rp \to U,\qquad \kappa(h,s):=\sqrt{s}i_Ch.
\end{align*}

\end{lemma}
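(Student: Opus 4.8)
The plan is to first show that $L$ is a cylindrical L{\'e}vy process and then to read its characteristics off the characteristic function produced by subordination. Throughout set $q(u^\ast):=\scapro{Cu^\ast}{u^\ast}$, so that $W(t)u^\ast$ is centred Gaussian with variance $t\,q(u^\ast)$, and let $\psi(r):=\alpha r+\int_0^\infty(1-e^{-rs})\,\rho(ds)$ be the Laplace exponent of the subordinator, i.e.\ $\E{e^{-r\ell(t)}}=e^{-t\psi(r)}$ for $r\ge 0$. Linearity of each map $L(t)\colon U^\ast\to L^0_P(\Omega;\R)$ is immediate. For the defining property I would exploit that $W$ is itself a cylindrical L{\'e}vy process, so that for fixed $u_1^\ast,\dots,u_n^\ast$ the process $(W(s)u_1^\ast,\dots,W(s)u_n^\ast:\,s\ge 0)$ is an $\R^n$-valued (Gaussian) L{\'e}vy process; since $\ell$ is an independent subordinator, the classical subordination theorem (e.g.\ \cite[Th.30.1]{Sato}) shows that $(L(t)u_1^\ast,\dots,L(t)u_n^\ast:\,t\ge 0)=(W(\ell(t))u_1^\ast,\dots,W(\ell(t))u_n^\ast:\,t\ge 0)$ is again an $\R^n$-valued L{\'e}vy process.

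It then remains to verify continuity of $L(t)$. Conditioning on the independent subordinator and using the Gaussianity of $W$ gives
\begin{align*}
  \phi_{L(t)}(u^\ast)=\E{\exp\big(-\tfrac12 q(u^\ast)\ell(t)\big)}=\exp\big(-t\,\psi(\tfrac12 q(u^\ast))\big).
\end{align*}
Since $C$ is bounded the form $q$ is continuous, and $\psi$ is continuous with $\psi(0)=0$; by linearity continuity of $L(t)$ reduces to showing $L(t)v_n^\ast\to 0$ in probability whenever $v_n^\ast\to 0$. As $q(v_n^\ast)\to 0$, the characteristic functions $\xi\mapsto\phi_{L(t)}(\xi v_n^\ast)$ converge pointwise to $1$, so the L{\'e}vy continuity theorem yields convergence in distribution, hence in probability, to $0$. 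Thus each $L(t)$ is a cylindrical random variable and $L$ is a cylindrical L{\'e}vy process with symbol $\Psi(u^\ast)=-\psi(\tfrac12 q(u^\ast))$.

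To identify the characteristics I would match $\Psi$ with the canonical form \eqref{eq.charcylLevy}. Expanding $\psi$,
\begin{align*}
  \Psi(u^\ast)=-\tfrac{\alpha}{2}q(u^\ast)-\int_0^\infty\big(1-e^{-\frac{s}{2}q(u^\ast)}\big)\,\rho(ds),
\end{align*}
whose Gaussian part equals $-\tfrac12\scapro{u^\ast}{(\alpha C)u^\ast}$, so that $Q=\alpha C$. For the jump part I would first verify that $\mu=(\gamma\otimes\rho)\circ\kappa^{-1}$ is a genuine cylindrical measure: the $\kappa$-preimage of a cylindrical set determined by $u_1^\ast,\dots,u_n^\ast$ is measurable with respect to $\Cc(H_C,\{i_C^\ast u_1^\ast,\dots,i_C^\ast u_n^\ast\})\otimes\Borel(\Rp)$, on which $\gamma\otimes\rho$ restricts to a genuine ($\sigma$-finite) measure, and \eqref{eq.cylLevymeasure} holds because $\E{s\scapro{i_Ch}{u^\ast}^2\wedge 1}\le\min(s\,q(u^\ast),1)\le c_{u^\ast}(s\wedge 1)$ is $\rho$-integrable. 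The decisive computation is then
\begin{align*}
  \int_U\Big(e^{i\scapro{u}{u^\ast}}-1-i\scapro{u}{u^\ast}\,\1_{B_{\R}}(\scapro{u}{u^\ast})\Big)\,\mu(du)=\int_0^\infty\E{g\big(\sqrt{s}\,\scapro{i_Ch}{u^\ast}\big)}\,\rho(ds),
\end{align*}
where $g(\beta)=e^{i\beta}-1-i\beta\1_{B_{\R}}(\beta)$ and the interchange is justified by the integrability bound just recorded. Under $\gamma$ the variable $\scapro{i_Ch}{u^\ast}=\scaprob{h}{i_C^\ast u^\ast}_{H_C}$ is centred Gaussian with variance $\norm{i_C^\ast u^\ast}_{H_C}^2=\scapro{Cu^\ast}{u^\ast}=q(u^\ast)$, so $\sqrt{s}\,\scapro{i_Ch}{u^\ast}$ is $N(0,s\,q(u^\ast))$-distributed. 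The truncation map $\beta\mapsto\beta\1_{B_{\R}}(\beta)$ is odd and integrates to $0$ against this symmetric law, whence the inner expectation equals $e^{-\frac{s}{2}q(u^\ast)}-1$. This reproduces precisely the jump integral above; in particular the drift $a$ must vanish, and by uniqueness of the cylindrical characteristics we obtain $(0,\alpha C,(\gamma\otimes\rho)\circ\kappa^{-1})$.

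The main obstacle I anticipate is the rigorous treatment of the canonical Gaussian cylindrical measure $\gamma$ on the infinite-dimensional reproducing kernel Hilbert space $H_C$ together with a L{\'e}vy measure $\rho$ that may be infinite near the origin: one has to confirm that $\mu$ is genuinely a \emph{cylindrical} measure and not a measure on $U$, that the Fubini interchange in the symbol computation is legitimate even though $\gamma$ is only finitely additive on $\Cc(H_C)$, and that \eqref{eq.cylLevymeasure} indeed holds. The symmetry argument annihilating the truncation contribution is short but essential, since it is exactly what yields $a=0$; and although stochastic continuity and independence of the increments of the subordinated process are classical, they hinge on the independence of $W$ and $\ell$ and should be invoked explicitly.
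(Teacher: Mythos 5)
Your proposal is correct and follows essentially the same route as the paper: condition on the independent subordinator to get $\phi_{L(t)}(u^\ast)=\exp\big(-t\tau(\tfrac12\scapro{Cu^\ast}{u^\ast})\big)$, rewrite $e^{-\frac{s}{2}\scapro{Cu^\ast}{u^\ast}}-1$ as the integral of $e^{i\scapro{u}{u^\ast}}-1-i\scapro{u}{u^\ast}\1_{B_{\R}}(\scapro{u}{u^\ast})$ against $(\gamma\otimes\rho)\circ\kappa^{-1}$ via the Gaussian identity and the symmetry of $\gamma$, and check the cylindrical L\'evy measure property through the bound $\int_U\big(\scapro{u}{u^\ast}^2\wedge 1\big)\,\mu(du)\le\int_0^\infty\big(s\scapro{Cu^\ast}{u^\ast}\wedge 1\big)\,\rho(ds)<\infty$. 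The only cosmetic difference is that the paper establishes the cylindrical L\'evy property by citing Lemma 3.8 of Applebaum--Riedle, where you invoke the finite-dimensional subordination theorem directly and make the continuity of $L(t)$ explicit.
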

\begin{proof}
The very definition of $L$ implies by Lemma 3.8 in  \cite{DaveMarkus} using independence of $W$ and $\ell$ that $L$ is a cylindrical L{\'e}vy process.
The characteristic function of the subordinator $\ell$  can be analytically continued, such that for each $t\ge 0$ we obtain the Laplace transform of $\ell(t)$ by
  \begin{align*}
    E\left[\exp(-\beta \ell(t))\right]
    = \exp\left(-t \tau(\beta)\right)\qquad\text{for all }\beta>0,
  \end{align*}
  where the Laplace exponent $\tau$ is defined by
  \begin{align*}
    \tau(\beta):= \alpha \beta +\int_0^\infty \left(1-e^{-\beta s}\right)\rho(ds)
    \qquad\text{for all }\beta>0,
  \end{align*}
see \cite[Th.24.11]{Sato}.
Independence of $W$ and $\ell$ implies that for each $t\ge 0$ and $u^\ast \in U^\ast$
the characteristic function $\phi_{L(t)}$ of $L(t)$ is given by
  \begin{align}\label{eq.charY}
    \phi_{L(t)}(u^\ast)
&= \int_0^\infty E\left[e^{i W(s)u^\ast}\right]\, P_{\ell (t)}(ds)\notag\\
&= \int_0^\infty e^{-\tfrac{1}{2}s \scapro{Cu^\ast}{u^\ast}}\, P_{\ell (t)}(ds)
= \exp\left(-t\tau\left(\tfrac{1}{2}\scapro{C u^\ast}{u^\ast}\right)\right).
  \end{align}
By using  $C=i_Ci_C^\ast$, $\gamma(H_C)=1$ and the symmetry of the canonical Gaussian cylindrical measure $\gamma$
we obtain
\begin{align}\label{eq.transint}
& \int_0^\infty\left( e^{-\tfrac{1}{2}s\scapro{C u^\ast}{u^\ast} }-1  \right) \,\rho(ds)\notag\\
  &\qquad\qquad= \int_0^\infty \int_{H_C} \left(e^{i\sqrt{s}\scapro{i_Ch}{u^\ast}}-1\right)\, \gamma(dh)\,\rho(ds)\notag\\
 &\qquad\qquad= \int_0^\infty \int_{H_C} \left(e^{i\sqrt{s}\scapro{i_Ch}{u^\ast}}-1- i\sqrt{s}\scapro{i_Ch}{u^\ast}\1_{B_{\R}}(\sqrt{s}\scapro{i_Ch}{u^\ast}) \right)\, \gamma(dh)\,\rho(ds)\notag\\
    &\qquad\qquad= \int_{U} \left(e^{i\scapro{u}{u^\ast}}-1-i\scapro{u}{u^\ast}\1_{B_{\R}}(\scapro{u}{u^\ast}) \right)\, \left(\left(\gamma\otimes\rho\right)\circ \kappa^{-1}\right)(du).
\end{align}
Note that $\left(\gamma\otimes\rho\right)\circ \kappa^{-1}$ is a cylindrical L{\'e}vy measure since for each $u^\ast\in U^\ast$ we have
\begin{align*}
 \int_{U} \Big(\scapro{u}{u^\ast}^2 \wedge 1 \Big)\, \left(\left(\gamma\otimes\rho\right)\circ \kappa^{-1}\right)(du)
& =\int_0^\infty \int_{H_C} \Big(s\scapro{i_Ch}{u^\ast}^2 \wedge 1 \Big)\,
\gamma(dh)\, \rho(ds)\\
& \le\int_0^\infty \left( s \int_{H_C} \scapro{i_Ch}{u^\ast}^2 \, \gamma(dh) \wedge 1 \right)\,
\rho(ds)\\
& =\int_0^\infty
  \Big( s\scapro{Cu^\ast}{u^\ast}\wedge 1\Big)\, \rho(ds)<\infty.
\end{align*}
The finiteness of the last integral is shown in \cite[Th.21.5]{Sato}.
Applying  equality \eqref{eq.transint} to the representation \eqref{eq.charY} yields that the characteristic function of $L(t)$ is of the claimed form.
\end{proof}
\end{example}

The previous example highlights an important difference between cylindrical Wiener and cylindrical L{\'e}vy processes. According to the Karhunen-Lo{\`e}ve expansion, each cylindrical Wiener processes can be represented by a sum of independent $U$-valued random variables, see for example \cite[Th.20]{Riedle10}. However, such kind of representation cannot be expected for the noise constructed  in Lemma \ref{le.BrzZab10}.

\begin{example} Another example of a cylindrical L{\'e}vy process is the {\em impulsive cylindrical process on $L^2_{\lambda}(\O;\R)$},
which is introduced in the monograph \cite{PeszatZab}. In our work \cite{DaveMarkus} we show that also this kind of a noise can be understood as a specific example of a cylindrical L{\'e}vy approach in our general approach.
\end{example}

\section{Stochastic integration}

In this section,  $U$ and $V$ are separable Banach spaces and $(L(t):\, t\ge 0)$ denotes a cylindrical L{\'e}vy process in $U$ with characteristics $(a,Q,\mu)$. Let $f\colon [0,T]\to \L(U,V)$ be a deterministic function, where $\L(U,V)$ denotes the space of linear, bounded functions from $U$ to $V$.
The aim of this section is to define a stochastic integral
\begin{align*}
 I_A:= \int_A f(s)\,dL(s)
\end{align*}
as a $V$-valued random variable for each Borel set $A\subseteq [0,T]$.

Our approach is based on the idea that the random variable $I_A$, if it exists, must be infinitely divisible and thus, its probability distribution is uniquely described  by its characteristics, say $(b_A,R_A,\nu_A)$. If we have a candidate 
for the characteristics of $I_A$ and if there are conditions known (such as in Hilbert spaces) guaranteeing the existence of an infinitely divisible random variable in terms of its prospective characteristics, then we can describe the class of integrable functions $f\colon [0,T]\to \L(U,V)$.  This approach works in every separable Banach space $V$ in which explicit conditions on the characteristics are known guaranteeing the existence of a corresponding infinitely divisible measure.

In order to have a candidate for the characteristics of $I_A$ at hand for formulating the conditions on integrability, we first introduce a cylindrical random variable $Z_A\colon V^\ast \to L_P^0(\Omega;\R)$ as a {\em cylindrical integral of $f$}.
Then we call $f$ {\em integrable with respect to $L$} if for each Borel set $A\subseteq [0,T]$ the cylindrical integral $Z_A$ is induced by a classical random variable  $I_A\colon \Omega\to V$, i.e.\
\begin{align*}
  Z_Av^\ast= \scapro{I_A}{v^\ast}
  \qquad\text{for all }v^\ast\in V^\ast.
\end{align*}
In this way one can think of $I_A$ as a stochastic Pettis integral.

For a well defined integral we expect that
\begin{align*}
\scapro{\int_A f(s)\,dL(s)}{v^\ast}=   \int_A f^\ast (s)v^\ast\, dL(s)\qquad\text{for all }v^\ast\in V^\ast.
\end{align*}
Thus, in a first step, we introduce a real valued stochastic integral for $U^\ast$-valued functions  with respect to a cylindrical L{\'e}vy process $L$.
For this purpose,  we initially consider simple $U^\ast$-valued functions.
A deterministic function $g\colon [0,T]\to U^\ast$ is called simple if there is a partition $0=t_0\le t_1\le \cdots\le t_m=T$ such that $g$ is constant on the open interval $(t_k,t_{k+1})$ for each $k=0,\dots, m-1$. The space of all simple functions is denoted by $\H([0,T];U^\ast)$ and it is endowed with the supremum norm
\begin{align*}
  \norm{g}_\infty:=\sup_{s\in [0,T]}\norm{g(s)}.
\end{align*}
Let $\G([0,T];U^\ast)$ denote the space of deterministic regulated functions; these are all functions $g\colon [0,T]\to U^\ast$ such that  for every $t\in (0,T)$ there exists the limit of $g$ on the left and on the right of $t$ and on the right of $0$ and on the left of $T$. In other words, each regulated  function has only discontinuities of the first kind. It is  shown in 
 \cite[Ch.II.1.3]{Bourbaki} or \cite[Ch.VII.6]{Dieudonne} that a function $g$ is regulated if and only if it can be uniformly approximated by step functions. In particular, regulated functions are bounded and the space $\G([0,T];U^\ast)$ can be equipped with the supremum norm, which turns it into a Banach space. 

For a simple function $g\in \H([0,T]; U^\ast)$ which attains the value $u_k^\ast$ on the interval $(t_k,t_{k+1})$ for $k=0,\dots, m-1$  define a mapping $J\colon \H([0,T];U^\ast)\to L^0_P(\Omega;\R)$ by
\begin{align}\label{eq.defI}
  J(g):=\sum_{k=0}^{m-1} \big(L(t_{k+1})-L(t_k)\big)(u_k^\ast).
\end{align}
In order to show that the mapping $J$ is continuous we need the following result.

\begin{lemma}\label{le.symbolcont}
If $\Psi\colon U^\ast\to\C$  is the symbol of a cylindrical L{\'e}vy process $L$ then the mapping
\begin{align*}
 \G([0,T];U^\ast)\to L^1([0,T];\C),
 \qquad g\mapsto\Psi(g(\cdot))
\end{align*}
is continuous.
\end{lemma}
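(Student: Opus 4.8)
The plan is to prove sequential continuity, which suffices since $\G([0,T];U^\ast)$ is a metric space. So let $g_n\to g$ in $\G([0,T];U^\ast)$, i.e.\ $\norm{g_n-g}_\infty\to 0$, and aim to show $\int_0^T\abs{\Psi(g_n(s))-\Psi(g(s))}\,ds\to 0$. A convergent sequence is bounded, so there is $c>0$ with $\norm{g_n}_\infty\le c$ for all $n$ and $\norm{g}_\infty\le c$; thus $g_n(s),g(s)\in\{u^\ast:\norm{u^\ast}\le c\}$ for every $s\in[0,T]$. Two ingredients then drive the argument: pointwise convergence $\Psi(g_n(s))\to\Psi(g(s))$ for each $s$, and a dominating function. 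The latter is immediate from Lemma \ref{le.Psibounded}: with $M_c:=\sup_{\norm{u^\ast}\le c}\abs{\Psi(u^\ast)}<\infty$ we have $\abs{\Psi(g_n(s))-\Psi(g(s))}\le 2M_c$ for all $n$ and $s$, and the constant $2M_c$ is integrable on the finite interval $[0,T]$. Granting the pointwise convergence, dominated convergence finishes the proof.

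It remains to establish the pointwise convergence, and here the only real work is the continuity of the symbol $\Psi$ on $U^\ast$: once $\Psi$ is continuous, $g_n(s)\to g(s)$ in norm gives $\Psi(g_n(s))\to\Psi(g(s))$ for every $s$. Writing $\Psi=ia-\tfrac12 q+F$ with $F(u^\ast):=\int_U(e^{i\scapro{u}{u^\ast}}-1-i\scapro{u}{u^\ast}\1_{B_\R}(\scapro{u}{u^\ast}))\,\mu(du)$, the first two terms are continuous: $a$ by property (1), and $u^\ast\mapsto qu^\ast=\scapro{u^\ast}{Qu^\ast}$ by property (2), since $Q\in\L(U^\ast,U^{\ast\ast})$. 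The substance is the continuity of the L\'evy-type integral $F$. For a sequence $u_n^\ast\to u_0^\ast$ I would pass to the pushforward measures and invoke property (3): the finite measures $(\abs{\beta}^2\wedge1)(\mu\circ(u_n^\ast)^{-1})(d\beta)$ converge weakly to $(\abs{\beta}^2\wedge1)(\mu\circ(u_0^\ast)^{-1})(d\beta)$ in $M(\R)$. Rewriting $F(u^\ast)=\int_\R\tfrac{e^{i\beta}-1-i\beta\1_{B_\R}(\beta)}{\abs{\beta}^2\wedge1}\,(\abs{\beta}^2\wedge1)(\mu\circ(u^\ast)^{-1})(d\beta)$ expresses $F$ as integration of a fixed bounded kernel against these weakly convergent measures; the kernel extends continuously across $\beta=0$ with value $-\tfrac12$, so if it were everywhere continuous the claim would follow at once.

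The main obstacle is precisely that this kernel is discontinuous at $\beta=\pm1$, where the truncation $\1_{B_\R}$ jumps, and weak convergence alone does not control the integral when the limiting pushforward charges $\{\pm1\}$. I would resolve this by replacing the sharp truncation $\1_{B_\R}$ by a fixed continuous cut-off $\chi$, equal to $1$ near the origin and compactly supported, which renders the kernel bounded and genuinely continuous and hence makes the corresponding integral continuous in $u^\ast$ by property (3); the discrepancy between the two truncations is the integral of $\beta(\chi(\beta)-\1_{B_\R}(\beta))$, which is absorbed into the drift and is continuous because $a$ is continuous by property (1). The uniform finiteness $\sup_n(\abs{\beta}^2\wedge1)(\mu\circ(u_n^\ast)^{-1})(\R)<\infty$ needed to justify these manipulations, together with the uniform smallness of the mass near $\beta=0$, is furnished by Lemma \ref{le.thelemma}. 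A robust alternative that sidesteps the truncation entirely is to note that $\phi_{L(1)}=e^{\Psi}\colon U^\ast\to\C\setminus\{0\}$ is continuous and that $\beta\mapsto\Psi(\beta u^\ast)$ is the continuous symbol of the real-valued L\'evy process $(L(t)u^\ast:t\ge0)$ along each ray; since $U^\ast$ is simply connected, the continuous non-vanishing $\phi_{L(1)}$ admits a continuous logarithm vanishing at the origin, which must coincide with $\Psi$ ray by ray, yielding continuity of $\Psi$ directly.
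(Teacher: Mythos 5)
Your overall strategy coincides with the paper's: Lemma \ref{le.Psibounded} supplies the dominating bound, and the pointwise convergence is reduced to continuity of the three pieces of $\Psi$, with the L\'evy part rewritten as the integral of the kernel $f(\beta)=(e^{i\beta}-1-i\beta\1_{B_{\R}}(\beta))/(\beta^2\wedge 1)$ against the weakly convergent measures of property (3). Where you add real value is in observing that this kernel is \emph{not} continuous at $\beta=\pm 1$ (the one-sided limits there differ by $\pm i$), whereas the paper's proof simply asserts that $f$ is ``clearly bounded and continuous''; since weak convergence does not control integrals of functions whose discontinuity set may be charged by the limit measure $(\abs{\beta}^2\wedge 1)\big(\mu\circ(u_0^\ast)^{-1}\big)$, you have identified a genuine, though repairable, lacuna in the paper's own argument.

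That said, your primary repair does not close the gap: after replacing $\1_{B_{\R}}$ by a continuous cut-off $\chi$, the discrepancy $\int_{\R}\beta\,(\chi(\beta)-\1_{B_{\R}}(\beta))\,\big(\mu\circ(u^\ast)^{-1}\big)(d\beta)$ is again the integral of a function with jumps at $\pm 1$ against the same weakly convergent measures, and its continuity in $u^\ast$ does not follow from property (1) --- that property concerns the drift relative to the standard truncation, and the continuity of the re-truncated drift is precisely what would need proving. The fix merely relocates the discontinuity. Your closing alternative, however, is sound and is the right way to finish: $\phi_{L(1)}=e^{\Psi}$ is continuous (by continuity of $L(1)$ and \cite[Pro.IV.3.4]{Vaketal}) and non-vanishing on the contractible space $U^\ast$, hence admits a unique continuous logarithm vanishing at $0$; along each ray this logarithm must agree with $\beta\mapsto\Psi(\beta u^\ast)$, the distinguished logarithm of the infinitely divisible real random variable $L(1)u^\ast$, so $\Psi$ itself is continuous. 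Combined with your dominated-convergence frame (with the single dominating constant $2M_c$) this yields a complete proof, and one that is arguably cleaner than the paper's, since it dispenses with the kernel $f$ altogether.
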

\begin{proof}
Continuity of $\Psi$ and
Lemma \ref{le.Psibounded} guarantee that $\Psi(g(\cdot))\in L^1([0,T];\C)$ for
$g\in \G([0,T];U^\ast)$.
Let $(g_n)_{n\in\N}$ be a sequence in $\G([0,T];U^\ast)$ converging to $g\in \G([0,T];U^\ast)$.
Recall that $(a,Q,\mu)$ denotes the cylindrical characteristics of $L$ and $\Psi$ is given in \eqref{eq.charcylLevy}. Inequality  \eqref{eq.drift-bounded-bounded} shows that $a\colon U^\ast\to\R$ maps bounded sets into bounded sets and thus, Lebesgue's dominated convergence theorem implies
\begin{align*}
 \lim_{n\to\infty} \int_0^T  \abs{{a(g_n(s))}- {a(g(s))}}\,ds=0.
\end{align*}
Another application of Lebesgue's dominated convergence theorem shows
\begin{align*}
\lim_{n\to\infty} \int_0^T \abs{\scapro{g_n(s)}{Qg_n(s)}-\scapro{g(s)}{Qg(s)}}\,ds=0.
\end{align*}
Define for each $n\in\N$ the function
\begin{align*}
h_n\colon [0,T]\to \C,\qquad  h_n(s)= \int_U \left(e^{i \scapro{u}{g_n(s)}}-1- i\scapro{u}{ g_n(s)}  \1_{B_{\R}}(\scapro{u}{g_n(s)})\right) \,  \mu(du),
\end{align*}
and the function
\begin{align*}
f\colon \R\to\C,\qquad f(\beta)=
\begin{cases}
  \frac{e^{i\beta}-1-i\beta\1_{B_{\R}}(\beta)}{\beta^2\wedge 1}, & \text{if } \beta\neq 0, \\
  -\frac{1}{2}, &\text{if }\beta=0.
\end{cases}
\end{align*}
Clearly, the function $f$ is bounded and continuous.
Lemma \ref{le.thelemma}  guarantees
\begin{align*}
\sup_{n\in \N}\sup_{s\in [0,T]}\abs{h_n(s)}&= \sup_{n\in \N}\sup_{s\in [0,T]}
\abs{\int_U f(\scapro{u}{g_n(s)}) \left(\scapro{u}{g_n(s)}^2 \wedge 1\right) \,  \mu(du)}\\
&\le \norm{f}_\infty \sup_{n\in \N}\sup_{s\in [0,T]}
 \int_U \big(\scapro{u}{g_n(s)}^2 \wedge 1\big)\, \mu(du)\\
& <\infty.
\end{align*}
Since $g_n(s)\to g(s)$  for each $s\in [0,T]$, it follows from \eqref{eq.weakconvcont}:
\begin{align*}
\lim_{n\to\infty} h_n(s)&= \lim_{n\to\infty}\int_{\R}f(\beta)\big( \abs{\beta}^2\wedge 1 \big) \,
  \big(\mu\circ g_n(s)^{-1})(d\beta)\\
&= \int_{\R}f(\beta)\big( \abs{\beta}^2\wedge 1 \big) \,
  \big(\mu\circ g(s)^{-1})(d\beta)\\
&=: h(s).
\end{align*}
Lebesgue's dominated convergence theorem implies
\begin{align*}
\lim_{n\to\infty} \int_0^T \abs{h_n(s)-h(s)}\,ds=0,
\end{align*}
which completes the proof.
\end{proof}

\begin{lemma}\label{le.probcon}
  The operator $J\colon \H([0,T];U^\ast)\to L^0_P(\Omega;\R)$ defined in \eqref{eq.defI}   is continuous where $L^0_P(\Omega;\R)$ is equipped with the topology of convergence in probability.
\end{lemma}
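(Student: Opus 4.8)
The plan is to compute the characteristic function of $J(g)$ explicitly and to combine the resulting formula with Lemma~\ref{le.symbolcont}, exploiting that a linear map between metrisable topological vector spaces is continuous as soon as it is continuous at the origin. First I would record that $J$ is linear: given two simple functions one passes to a common refinement of their partitions, and on each subinterval the linearity of the increment operators $L(t_{k+1})-L(t_k)\colon U^\ast\to L^0_P(\Omega;\R)$ yields $J(g+h)=J(g)+J(h)$ and $J(\lambda g)=\lambda J(g)$; that $J(g)$ is independent of the chosen partition follows by telescoping, since $L(\cdot)u^\ast$ is additive over adjacent time intervals. Consequently it is enough to prove continuity of $J$ at $0$, that is, $J(g_n)\to 0$ in probability whenever $\norm{g_n}_\infty\to 0$.

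Next I would determine $\phi_{J(g)}$ for a simple function $g$ attaining the value $u_k^\ast$ on $(t_k,t_{k+1})$. The random variables $\big(L(t_{k+1})-L(t_k)\big)(u_k^\ast)$, $k=0,\dots,m-1$, are independent: for the fixed functionals $u_0^\ast,\dots,u_{m-1}^\ast$ the $\R^m$-valued process $(L(t)u_0^\ast,\dots,L(t)u_{m-1}^\ast)$ is, by definition of a cylindrical L\'evy process, a L\'evy process and hence has independent increments over the disjoint intervals $(t_k,t_{k+1})$; selecting the $k$-th coordinate of the $k$-th increment preserves independence. Using linearity, $r\,(L(t_{k+1})-L(t_k))u_k^\ast=(L(t_{k+1})-L(t_k))(ru_k^\ast)$, together with stationarity of the increments and \eqref{eq.charcylLevy-Levy}, the characteristic function factorises as
\[
\phi_{J(g)}(r)=\prod_{k=0}^{m-1}\exp\big((t_{k+1}-t_k)\Psi(ru_k^\ast)\big)
=\exp\left(\int_0^T \Psi(rg(s))\,ds\right)
\qquad\text{for all }r\in\R.
\]

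Finally I would let $\norm{g_n}_\infty\to 0$ and fix $r\in\R$. Then $\norm{rg_n}_\infty\to 0$, so $rg_n\to 0$ in $\G([0,T];U^\ast)$, and since $\Psi(0)=0$ Lemma~\ref{le.symbolcont} gives $\Psi(rg_n(\cdot))\to 0$ in $L^1([0,T];\C)$. Hence $\int_0^T\Psi(rg_n(s))\,ds\to 0$ and $\phi_{J(g_n)}(r)\to 1$ for every $r\in\R$. As the constant $1$ is the characteristic function of $\delta_0$ and is continuous at the origin, the continuity theorem for characteristic functions yields $J(g_n)\Rightarrow\delta_0$; convergence in distribution to a constant is equivalent to convergence in probability, so $J(g_n)\to 0$ in probability, which by linearity establishes continuity of $J$ on all of $\H([0,T];U^\ast)$.

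The step I expect to be the main obstacle is the justification of the characteristic-function formula, specifically the independence of the increments $(L(t_{k+1})-L(t_k))(u_k^\ast)$ across $k$, which has to be extracted from the defining property that finite tuples $(L(t)u_1^\ast,\dots,L(t)u_n^\ast)$ form $\R^n$-valued L\'evy processes, and the passage from weak convergence to convergence in probability, which hinges on the limit being degenerate; the remaining estimates are a direct application of Lemma~\ref{le.symbolcont}.
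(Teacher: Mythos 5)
Your proof is correct and follows essentially the same route as the paper: reduce to continuity at the origin by linearity, compute $\phi_{J(g)}(\beta)=\exp\bigl(\int_0^T\Psi(\beta g(s))\,ds\bigr)$ from the independent and stationary increments, and conclude via Lemma \ref{le.symbolcont} and the equivalence of weak convergence and convergence in probability for a degenerate limit. You merely supply more detail than the paper (the common-refinement argument for linearity and the extraction of independence of $(L(t_{k+1})-L(t_k))(u_k^\ast)$ from the joint $\R^m$-valued L\'evy process), all of which is sound.
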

\begin{proof}
Let $(g_n)_{n\in\N}\subseteq \H([0,T]; U^\ast)$ be a sequence converging to $g$ in $\H([0,T]; U^\ast)$.
Then, by linearity of $J$ it follows that $J(g_n)$ converges to $J(g)$ in probability  if and only if $J(g_n-g)\to 0$ in probability. However, the latter convergence occurs if and only if $J(g_n-g)\to 0$ weakly.

Independent increments of $L$ yields that the characteristic function of $J(g_n)$ is given by
\begin{align*}
\phi_{J(g_n)}\colon \R\to\C,\qquad  \phi_{J(g_n)}(\beta)= \exp\left(\int_0^T \Psi(\beta g_n(s))\,ds\right).
\end{align*}
Consequently, it follows from Lemma \ref{le.symbolcont} that
$\phi_{J(g_n)}(\beta)$ converges to $\phi_{J(g)}(\beta)$ for all $\beta\in\R$, which completes the proof.
\end{proof}

The mapping $J\colon \H([0,T]; U^\ast)\to L^0_P(\Omega;\R)$ is linear and uniformly continuous,
since the metric of $L^0_P(\Omega; \R)$ is translation invariant.
 The principle of extension by continuity (\cite[Th.I.6.17]{DunSch1}) 
enables us to extend the mapping $J$ to the space $\G([0,T];U^\ast)$, i.e.\ we define
\begin{align*}
  J(g):=\lim_{n\to\infty} J(g_n) \qquad \text{in $L^0_P(\Omega;\R)$},
\end{align*}
where $(g_n)_{n\in\N}\subseteq \H([0,T]; U^\ast)$ is chosen such that $g_n\to g$
in $\G([0,T];U^\ast)$. Lemma \ref{le.symbolcont} implies that
the characteristic function of $J(g)$ is given by
\begin{align}\label{eq.charfuncJ}
 \phi_{J(g)}\colon\R\to\C,\qquad \phi_{J(g)}(\beta)= \exp\left(\int_0^T \Psi(\beta g(s))\,ds\right).
\end{align}

Now we come back to the original aim to introduce a stochastic integral for integrands with values in  $\L(U,V)$ as a genuine $V$-valued random variable. For that purpose, let $f\colon[0,T]\to \L(U,V)$
 be a function and  for each $s\in [0,T]$ denote by $f^\ast(s)$ the adjoint of $f(s)\colon U\to V$. We say that the function $f\colon [0,T]\to \L(U,V)$  is {\em weakly in $\G([0,T];U^\ast)$}
if $f^\ast(\cdot)v^\ast$ is in $\G([0,T];U^\ast)$ for each $v^\ast\in V^\ast$.
Clearly, the function $s\mapsto f^\ast(s)v^\ast$ is measurable for each $v^\ast\in V^\ast$.

\begin{lemma}\label{le.regulated}
  If $\tau\colon [0,T]\to\R$ is measurable and bounded and $g\in \G([0,T];U^\ast)$ then
  $\tau(\cdot)g(\cdot)\in \G([0,T];U^\ast)$.
\end{lemma}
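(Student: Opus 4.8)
The plan is to argue directly from the characterisation of regulated functions: a $U^\ast$-valued map lies in $\G([0,T];U^\ast)$ exactly when it possesses a limit from the left and from the right at every point (equivalently, when it is a uniform limit of $U^\ast$-valued step functions, as recalled in the excerpt). So I would fix $t\in[0,T]$ and establish the existence of the right-hand limit of $\tau(\cdot)g(\cdot)$ at $t$; the left-hand limit, and the endpoint cases, are treated identically. Since $g\in\G([0,T];U^\ast)$, the right limit $g(t^+):=\lim_{s\downarrow t}g(s)$ exists in $U^\ast$, and this is the object I would pivot on.

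For $s>t$ I would use the decomposition
\[
\tau(s)g(s)=\tau(s)\big(g(s)-g(t^+)\big)+\tau(s)g(t^+).
\]
The first summand is harmless: $\norm{\tau(s)\big(g(s)-g(t^+)\big)}\le\norm{\tau}_\infty\,\norm{g(s)-g(t^+)}\to 0$ as $s\downarrow t$, which is precisely where the boundedness of $\tau$ is used. Consequently the existence of the right limit of $\tau g$ at $t$ reduces to the existence of $\lim_{s\downarrow t}\tau(s)g(t^+)$, that is, to the scalar one-sided limit of $\tau$ acting on the fixed vector $g(t^+)$. Once one-sided limits are secured at each $t$, membership in $\G([0,T];U^\ast)$ follows and $\tau(\cdot)g(\cdot)$ is regulated.

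The main obstacle is this final reduction, namely controlling $\lim_{s\downarrow t}\tau(s)$ at the points $t$ where $g(t^+)\neq 0$. This is the only place where the conclusion genuinely depends on the fine structure of $\tau$ rather than on the regularity of $g$, and it is the step I would scrutinise most carefully. The route I would take is to bring in a uniform step-function approximation $g_n\to g$: since $\norm{\tau}_\infty<\infty$ we get $\norm{\tau g_n-\tau g}_\infty\le\norm{\tau}_\infty\,\norm{g_n-g}_\infty\to 0$, so $\tau g$ is a uniform limit of the functions $\tau g_n$, and on each of the finitely many intervals on which $g_n$ is constant the product $\tau(\cdot)g_n(\cdot)$ is just $\tau$ times a fixed vector. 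Verifying that $\tau$ supplies a regulated scalar factor on these constant pieces is exactly the delicate point, and I would expect the entire weight of the proof to rest there.
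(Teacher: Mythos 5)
Your proposal stops precisely where the proof has to happen, and the step you defer would in fact fail if pursued as you describe it. In your first, pointwise route you reduce the claim to the existence of $\lim_{s\downarrow t}\tau(s)$ at every $t$ with $g(t^+)\neq 0$; but a bounded measurable $\tau$ need not have one-sided limits anywhere (take $\tau=\1_A$ with $A$ the set of rationals in $[0,T]$ and $g$ a nonzero constant), so this reduction leads to a dead end rather than to a remaining verification. The same obstruction reappears at the end of your second route, when you ask whether $\tau$ ``supplies a regulated scalar factor'' on the intervals where $g_n$ is constant: in general it does not. So the proposal is missing the one idea that actually carries the argument.

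That idea, which is the entire content of the paper's short proof, is to approximate $\tau$ as well as $g$: since $\tau$ is bounded and measurable, there are simple functions $\tau_n$ (finitely many values, each attained on a measurable set, e.g.\ $\tau_n=2^{-n}\lfloor 2^n\tau\rfloor$) with $\norm{\tau-\tau_n}_\infty\to 0$. Combined with your uniform approximation $g_n\to g$ by step functions, the product estimate
\begin{align*}
  \norm{\tau g-\tau_n g_n}_\infty\le \norm{\tau}_\infty\norm{g-g_n}_\infty+\norm{\tau-\tau_n}_\infty\norm{g_n}_\infty\to 0
\end{align*}
exhibits $\tau g$ as a uniform limit of the simple functions $\tau_n g_n$. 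Be aware that this argument (and the lemma itself, which is later applied with $\tau=\1_A$ for arbitrary Borel sets $A$) only makes sense if ``simple'' is read in the measure-theoretic sense --- finitely many values, each on a measurable set --- and $\G([0,T];U^\ast)$ as the sup-norm closure of that class, rather than as interval step functions and classically regulated functions; under the literal ``one-sided limits'' reading you quote, the counterexample above shows the conclusion cannot hold. Your instinct that the fine structure of $\tau$ is where the difficulty lies was therefore sound, but the resolution is uniform approximation of $\tau$ by measurable simple functions, not control of one-sided limits of $\tau$.
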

\begin{proof}
Since $\tau$ is  bounded there exist simple functions $\tau_n$ converging uniformly to $\tau$. If $(g_n)_{n\in\N}\subseteq \H([0,T]; U^\ast) $ converges uniformly to $g$ it follows
\begin{align*}
  \norm{\tau g - \tau_n g_n}_\infty
  \le \norm{\tau}_\infty \norm{g-g_n}_\infty + \norm{\tau-\tau_n}_\infty
  \norm{g_n}_\infty \to 0.
\end{align*}
Thus, $\tau g$ can be uniformly approximated by simple functions, which completes the proof.
\end{proof}

Lemma \ref{le.regulated} guarantees that if $A\in \Borel([0,T])$ and
$g\in \G([0,T];U^\ast)$ then $\1_A(\cdot) g(\cdot)\in \G([0,T];U^\ast)$.
\begin{lemma}\label{le.Zdistribution}
If $f\colon [0,T]\to\L(U,V)$ is weakly in $\G([0,T];U^\ast)$ then
for each $A\in \Borel([0,T])$
\begin{align*}
  Z_A\colon V^\ast\to L^0_P(\Omega;\R),
  \qquad Z_A v^\ast:=J\big(\1_A(\cdot)f^\ast(\cdot)v^\ast\big)
\end{align*}
defines an infinitely divisible cylindrical random variable with characteristic function
\begin{align*}
  \phi_{Z_A}\colon V^\ast\to \C, \qquad \phi_{Z_A}(v^\ast)=\exp\left(\int_A \Psi(f^\ast(s)v^\ast)\,ds\right).
\end{align*}
Furthermore, the cylindrical characteristics $(b_A,r_A,\nu_A)$ of $Z_A$ is given by
\begin{align}
  &b_A\colon V^\ast \to \R,\qquad b_A(v^\ast):=\int_A a(f^\ast(s)v^\ast)\,ds,\label{eq.3charZ_f.1}\\
  &r_A\colon V^\ast\to \R, \qquad   r_A(v^\ast)= \int_A \scapro{f^\ast(s)v^\ast}{Qf^\ast(s)v^\ast}\,ds ,\label{eq.3charZ_f.2} \\
  &\nu_A\colon \Z(V)\to [0,\infty], \qquad \nu_A = (\mu\otimes \leb)\circ \chi_A^{-1},\label{eq.3charZ_f.3}
\end{align}
where $\chi_A\colon [0,T]\times U\to V$ is defined by $\chi_A(s,u):=\1_A(s)f(s)u$.
\end{lemma}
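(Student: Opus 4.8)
The plan is to establish the four assertions in turn, using the continuous extension of $J$ to $\G([0,T];U^\ast)$ and its characteristic function \eqref{eq.charfuncJ}. First I would check that $Z_A$ is a well-defined cylindrical random variable. Since $f$ is weakly in $\G([0,T];U^\ast)$, the function $s\mapsto f^\ast(s)v^\ast$ is regulated, hence bounded, for every $v^\ast\in V^\ast$; Lemma \ref{le.regulated} then gives $\1_A(\cdot)f^\ast(\cdot)v^\ast\in\G([0,T];U^\ast)$, so that $Z_Av^\ast=J(\1_A(\cdot)f^\ast(\cdot)v^\ast)$ is defined, and linearity of $Z_A$ is immediate from the linearity of each $f^\ast(s)$ and of $J$. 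For continuity I would first record that $\sup_{s\in[0,T]}\norm{f(s)}=\sup_{s\in[0,T]}\norm{f^\ast(s)}=:M<\infty$: indeed the family $\{f^\ast(s):s\in[0,T]\}\subseteq\L(V^\ast,U^\ast)$ is pointwise bounded because each $f^\ast(\cdot)v^\ast$ is bounded, so the uniform boundedness principle applies on the Banach space $V^\ast$. Given this bound, if $v_n^\ast\to 0$ then $f^\ast(s)v_n^\ast\to 0$ while remaining in the fixed ball of radius $M\norm{v_n^\ast}$; combining the continuity of $\Psi$ (continuity of $a$, of the quadratic form $q$, and of the jump part via \eqref{eq.weakconvcont}) with the uniform bound supplied by Lemma \ref{le.Psibounded}, dominated convergence gives $\int_A\Psi(\beta f^\ast(s)v_n^\ast)\,ds\to 0$ for every $\beta\in\R$, that is $\phi_{Z_Av_n^\ast}(\beta)\to 1$, so $Z_Av_n^\ast\to 0$ in distribution and hence in probability.

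Next comes the characteristic function: evaluating \eqref{eq.charfuncJ} at $\beta=1$ with $g=\1_A(\cdot)f^\ast(\cdot)v^\ast$ and using $\Psi(0)=0$ yields $\phi_{Z_A}(v^\ast)=\exp(\int_0^T\Psi(\1_A(s)f^\ast(s)v^\ast)\,ds)=\exp(\int_A\Psi(f^\ast(s)v^\ast)\,ds)$, as claimed. Infinite divisibility I would obtain from the construction rather than from this formula: for a simple integrand the variable $J(g)$ is a finite sum of increments of $L$ over disjoint time intervals, and each finite-dimensional projection $(Z_Av_1^\ast,\dots,Z_Av_n^\ast)$ is a limit in probability of such sums computed on a common partition. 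Over disjoint intervals these $\R^n$-valued increment vectors are independent and each is infinitely divisible, because $(L(t)v_1^\ast,\dots,L(t)v_n^\ast)$ is an $\R^n$-valued L{\'e}vy process; since infinite divisibility is preserved under weak limits, the cylindrical distribution of $Z_A$ is infinitely divisible, and its characteristics are uniquely determined.

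It then remains to identify the triplet. Substituting the L{\'e}vy--Khintchine form \eqref{eq.charcylLevy} of $\Psi$ into $\int_A\Psi(f^\ast(s)v^\ast)\,ds$ splits the symbol of $Z_A$ into three parts. The drift term is $i\int_A a(f^\ast(s)v^\ast)\,ds$, giving \eqref{eq.3charZ_f.1}, and the Gaussian term is $-\tfrac{1}{2}\int_A\scapro{f^\ast(s)v^\ast}{Qf^\ast(s)v^\ast}\,ds$, giving \eqref{eq.3charZ_f.2}; both are finite by the bound $\norm{f^\ast(s)}\le M$ together with inequality \eqref{eq.drift-bounded-bounded} and $Q\in\L(U^\ast,U^{\ast\ast})$. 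For the jump part I would use the adjoint relation $\scapro{u}{f^\ast(s)v^\ast}=\scapro{f(s)u}{v^\ast}=\scapro{\chi_A(s,u)}{v^\ast}$ for $s\in A$, while $\chi_A(s,u)=0$ for $s\notin A$ kills the integrand there; a change of variables under $\chi_A$ then turns $\int_A\int_U(e^{i\scapro{u}{f^\ast(s)v^\ast}}-1-i\scapro{u}{f^\ast(s)v^\ast}\1_{B_{\R}}(\scapro{u}{f^\ast(s)v^\ast}))\,\mu(du)\,ds$ into the corresponding integral against $\nu_A=(\mu\otimes\leb)\circ\chi_A^{-1}$, which is \eqref{eq.3charZ_f.3}.

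The main obstacle is to make this last step rigorous, since $\mu$ is only a cylindrical measure, so $\mu\otimes\leb$ and its image under $\chi_A$ require justification. I must show that $\nu_A$ is a well-defined cylindrical measure on $\Z(V)$ which is in fact a cylindrical L{\'e}vy measure. For a cylinder set $C(v_1^\ast,\dots,v_n^\ast;B)$ its value should read $\int_0^T\mu(\{u\in U:(\1_A(s)\scapro{u}{f^\ast(s)v_1^\ast},\dots,\1_A(s)\scapro{u}{f^\ast(s)v_n^\ast})\in B\})\,ds$, so I need the measurability in $s$ of this inner cylindrical $\mu$-measure, which I would derive from the regularity of $s\mapsto f^\ast(s)v_j^\ast$ together with the weak-continuity property \eqref{eq.weakconvcont}; Fubini's theorem then justifies interchanging the $s$- and $u$-integrations above. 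Finally, the L{\'e}vy-measure condition \eqref{eq.cylLevymeasure} for $\nu_A$ follows from $\int_V(\scapro{x}{v^\ast}^2\wedge 1)\,\nu_A(dx)=\int_A\int_U(\scapro{u}{f^\ast(s)v^\ast}^2\wedge 1)\,\mu(du)\,ds$, since the inner integral is bounded uniformly over $\norm{f^\ast(s)v^\ast}\le M\norm{v^\ast}$ by Lemma \ref{le.thelemma} and $A$ has finite Lebesgue measure.
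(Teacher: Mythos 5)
Your proof is correct and follows essentially the same route as the paper: the uniform boundedness principle gives $m=\sup_{s\in[0,T]}\norm{f^\ast(s)}<\infty$, hence $\norm{\1_A(\cdot)f^\ast(\cdot)v^\ast}_\infty\le m\norm{v^\ast}$, and the characteristic function is read off from \eqref{eq.charfuncJ}. The paper's own proof is much terser --- it obtains continuity of $Z_A$ directly from the already-established continuity of $J$ on $\G([0,T];U^\ast)$ rather than re-deriving it through pointwise convergence of characteristic functions, and it leaves the infinite divisibility and the identification of the triplet $(b_A,r_A,\nu_A)$ implicit --- so your additional verifications (in particular the care taken with the well-definedness of $\nu_A=(\mu\otimes\leb)\circ\chi_A^{-1}$ as a cylindrical L\'evy measure) merely supply detail the paper omits.
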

\begin{proof}
Since  $\norm{f^\ast(\cdot)v^\ast}_\infty<\infty$ for all $v^\ast\in V^\ast$, the uniform boundedness principle implies
\begin{align}\label{eq.funiformbounded}
m:= \sup_{s\in [0,T]}\norm{f^\ast(s)}_{V^\ast\to U^\ast} <\infty.
\end{align}
Consequently, we obtain  $\norm{\1_A(\cdot)f^\ast(\cdot)v^\ast}_{\infty} \le m\norm{v^\ast}$ which implies the continuity of $Z_A \colon V^\ast\to L_P^0(\Omega;\R)$ by the continuity of $J\colon  \G([0,T];U^\ast)\to L_P^0(\Omega;\R)$.
The form of the characteristic function follows immediately from \eqref{eq.charfuncJ}.
\end{proof}

The cylindrical random variable $Z_A$ is called the {\em cylindrical integral of $f$ on $A$}. However, we want to define a genuine $V$-valued random variable as the stochastic integral of $f$ which we achieve in the following way:
\begin{definition}\label{de.stochint}
  A function $f\colon [0,T]\to \L(U,V)$ is called {\em stochastically integrable  w.r.t.\ $L$} if $f$ is weakly in $\G([0,T];U^\ast)$ and if for each $A\in \Borel([0,T])$
  there exists an $I_A \in L^0_P(\Omega; V)$ such that
  \begin{align}\label{eq.Z-induced-I}
    \scaprob{I_A}{v^\ast}=Z_A v^\ast \qquad\text{  for all }v^\ast\in V^\ast,
  \end{align}
where $Z_A$ denotes the cylindrical integral of $f$ on $A$. In this case $I_A$ is denoted by
\begin{align*}
 \int_A f(s)\,dL(s):=I_A.
\end{align*}
\end{definition}

The existence of a random variable $I_A\in L^0_P(\Omega; V)$ satisfying \eqref{eq.Z-induced-I}
is called that {\em $Z_A$ is induced by $I_A$}. Such a random variable $I_A$ exists if and only if the cylindrical distribution of $Z_A$ extends to a measure,  see \cite[Th.IV.2.5]{Vaketal}. Our approach by the cylindrical integral enables us to give sufficient and necessary conditions for the extension of the cylindrical distribution of $Z_A$ to a measure. In an arbitrary Banach space, as in the next Theorem, these conditions are rather abstract. However, in more specific spaces, such as Hilbert spaces or Banach spaces of type $p\in [1,2]$, some or even all of these conditions can be
simplified significantly. We demonstrate this by a few subsequent corollaries.

\begin{theorem}\label{th.integrability}
Let  $f\colon [0,T]\to \L(U,V)$ be a function which is weakly in $\G([0,T];U^\ast)$. Then $f$ is stochastically integrable if and only if the following is satisfied:
\begin{enumerate}
\item[{\rm (1)}] the mapping $T_a$ is weak$^\ast$-weakly sequentially continuous where
\begin{align}\label{cond.drift}
  T_a\colon V^\ast\to L^1([0,T];\R),\qquad T_a v^\ast=a(f^\ast(\cdot)v^\ast);
\end{align}
\item[{\rm (2)}] there exists a Gaussian covariance operator $R\colon V^\ast\to V$ satisfying
\begin{align}\label{cond.Covariance-int}
     \scapro{v^\ast}{Rv^\ast}= \int_0^T \scapro{f^\ast(s)v^\ast}{Qf^\ast(s)v^\ast}\,ds
    \qquad\text{for all }v^\ast\in V^\ast;
\end{align}
\item[{\rm (3)}]  for $\chi\colon [0,T]\times U\to V$  defined by $\chi(s,u):=f(s)u$,  the cylindrical measure
\begin{align}\label{cond.Levy-int}
   \nu \colon \Z(V)\to [0,\infty], \qquad \nu = (\mu\otimes \leb)\circ \chi^{-1},
\end{align}
extends to a measure and L{\'e}vy measure on $\Borel(V)$.
\end{enumerate}
\end{theorem}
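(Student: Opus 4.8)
The plan is to turn stochastic integrability into a pure extension problem and then to split that problem along the three characteristics. By Definition \ref{de.stochint} and the remark following it (invoking \cite[Th.IV.2.5]{Vaketal}), $f$ is stochastically integrable if and only if for every $A\in\Borel([0,T])$ the cylindrical distribution of $Z_A$ extends to a genuine probability measure on $\Borel(V)$. Since $Z_A$ is infinitely divisible with the characteristic function and cylindrical characteristics $(b_A,r_A,\nu_A)$ computed in Lemma \ref{le.Zdistribution}, any such extension is itself infinitely divisible, and by uniqueness of the characteristics it is governed by the L{\'e}vy--Khintchine form \eqref{eq.charLevy} in $V$. Comparing \eqref{eq.charLevy}, which truncates by $\1_{B_V}$, with the cylindrical symbol of $Z_A$, which truncates by $\1_{B_\R}$, I would read off that an extension exists precisely when the three ingredients can be realised genuinely in $V$: the L{\'e}vy measure $\nu_A$ extends to a genuine L{\'e}vy measure, the quadratic form $r_A$ is the covariance of a genuine Gaussian measure, and the truncation-corrected drift is induced by an element of $V$. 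The remaining work is to show that these per-$A$ requirements are equivalent to the full-interval conditions (1)--(3).

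First I would dispose of the jump part. Because $\chi_A(s,u)=\1_A(s)f(s)u$ only restricts the time variable, one has $\nu_A\le\nu$ on cylindrical sets bounded away from the origin, so condition (3) together with Theorem \ref{th.Levy-dominated} yields that each $\nu_A$ extends to a genuine L{\'e}vy measure on $\Borel(V)$, while $A=[0,T]$ recovers (3). Next the Gaussian part: from \eqref{eq.3charZ_f.2} and \eqref{cond.Covariance-int} one has $r_A(v^\ast)\le\scapro{v^\ast}{Rv^\ast}$ for all $v^\ast\in V^\ast$, so condition (2) and Theorem \ref{th.Gauss-dominated} realise the Gaussian component for every $A$, and again $A=[0,T]$ returns (2). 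For the sufficiency direction it then remains to observe that once a genuine Gaussian covariance, a genuine L{\'e}vy measure and a genuine drift are in hand, the convolution of the associated Gaussian measure, the measure from part (ii) of the definition of a L{\'e}vy measure, and a point mass at the drift is an infinitely divisible measure whose characteristic function matches $\phi_{Z_A}$ after the truncation correction is absorbed into the drift; this is the existence direction of \eqref{eq.charLevy}.

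The drift is the delicate point, and it is where condition (1) enters. Writing $b_A(v^\ast)=\int_0^T \1_A(s)\,(T_a v^\ast)(s)\,ds$ exhibits $b_A$ as the pairing of $\1_A\in L^\infty([0,T];\R)$ with $T_a v^\ast\in L^1([0,T];\R)$. Once $\nu_A$ is genuine, passing from the $\1_{B_\R}$-truncation to the $\1_{B_V}$-truncation adds the correction $\int_V \scapro{x}{v^\ast}\big(\1_{B_V}(x)-\1_{B_\R}(\scapro{x}{v^\ast})\big)\,\nu_A(dx)$, which I would show converges by splitting the domain according to whether $\norm{x}\le 1$ or $\abs{\scapro{x}{v^\ast}}\le 1$ and using $\int_V(\abs{\scapro{x}{v^\ast}}^2\wedge 1)\,\nu_A(dx)<\infty$. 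The genuine drift $b_A^V$ is characterised by $\scapro{b_A^V}{v^\ast}=b_A(v^\ast)+(\text{correction})$, and the extension exists if and only if this functional is induced by an element of $V$, i.e.\ if and only if it is weak$^\ast$-continuous on $V^\ast$. Since $V$ is separable, the closed unit ball of $V^\ast$ is weak$^\ast$-metrizable, so by the Krein--\v{S}mulian theorem weak$^\ast$-continuity of this bounded functional is equivalent to weak$^\ast$-sequential continuity. Testing a weak$^\ast$-null sequence against every $\1_A$ then translates precisely into weak$^\ast$-weak sequential continuity of $T_a$ into $L^1([0,T];\R)$, which is condition (1), the correction term being continuous in $v^\ast$ and not obstructing the equivalence.

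I expect the main obstacle to be this drift analysis: one must simultaneously control the truncation correction using that $\nu_A$ is already genuine, check the compatibility of the $v^\ast$-dependent truncation regions with linearity of the resulting drift, and convert the abstract requirement that the drift lie in $V$ for every $A$ into the single intrinsic statement (1) about $T_a$. The separability of $V$ and the metrizability it grants on bounded subsets of $V^\ast$ are essential here, as is the bookkeeping that the full-interval conditions (1)--(3) are equivalent to their per-$A$ counterparts through the domination Theorems \ref{th.Gauss-dominated} and \ref{th.Levy-dominated}.
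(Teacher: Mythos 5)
Your proposal is correct and follows essentially the same route as the paper: reduce stochastic integrability to the extension of the cylindrical distribution of $Z_A$ via \cite[Th.IV.2.5]{Vaketal}, settle the Gaussian and jump parts by the domination results (Theorems \ref{th.Gauss-dominated} and \ref{th.Levy-dominated}) applied with $A=[0,T]$ as the dominating case, and handle the drift by adding the truncation correction $\Delta_{\nu_A}$ and upgrading weak$^\ast$-sequential continuity to weak$^\ast$-continuity using separability of $V$, which is exactly the role of Lemmas \ref{le.correction-term} and \ref{le.drift-cond-Pettis} in the paper. The obstacles you flag (linearity of the resulting drift functional and the per-$A$ versus full-interval bookkeeping) are precisely the points the paper addresses, in the same way.
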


Note that although stochastic integrability of $f$ requires that the cylindrical integral $Z_A$ is induced by a random variable $I_A$ for all $A\in \Borel([0,T])$, Conditions \eqref{cond.Covariance-int} and \eqref{cond.Levy-int} can be considered as conditions only for $A=[0,T]$. Condition \eqref{cond.drift} is the best sufficient and necessary prescription for the first term of the characteristics available, in order to guarantee integrability for the following reason: if $L$ is a genuine L{\'e}vy process with classical characteristics $(b,0,0)$ for some $b\in U$, then stochastic integrability of the function $f$ according to Definition \ref{de.stochint}  reduces to Pettis integrability of the function $f(\cdot)b$. In this case,
$a(\cdot)=\scapro{b}{\cdot}$ and Condition \eqref{cond.drift} is known to be equivalent to Pettis integrability of $f(\cdot)b$, see \cite[Th.4.1]{Musial}. In Lemma \ref{le.drift-cond-Pettis} we explain this equivalence more general for genuine L{\'e}vy processes with arbitrary characteristics $(b,Q,\mu)$.

In light of Lemma \ref{le.Zdistribution}, Conditions \eqref{cond.Covariance-int} and \eqref{cond.Levy-int} seem to be a straightforward conclusion, but the change from a cylindrical to a genuine infinitely divisible measure requires some further arguments. The correction term between the classical L{\'e}vy-Khintchine formula in \eqref{eq.charLevy} and the cylindrical version in
\eqref{eq.charcylLevy} is considered in the following lemma.
\begin{lemma}\label{le.correction-term}
  If $\xi$ is a L{\'e}vy measure on $\Borel(V)$ then the function
\begin{align*}
\Delta_\xi\colon V^\ast\to \R, \qquad \Delta_\xi(v^\ast):= \int_V
\scapro{v}{v^\ast}
  \big(\1_{B_{V}}(v)-\1_{B_{\R}}(\scapro{v}{v^\ast})\big)\, \xi(dv),
\end{align*}
is well defined and satisfies $\Delta_\xi(v_n^\ast)\to 0$ for a sequence
$(v_n^\ast)_{n\in\N}\subseteq V^\ast$ converging weakly$^\ast$ to $0$.
\end{lemma}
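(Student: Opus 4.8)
The plan is to exploit that the integrand $\scapro{v}{v^\ast}\big(\1_{B_V}(v)-\1_{B_{\R}}(\scapro{v}{v^\ast})\big)$ vanishes wherever the two truncations agree, and to split $V$ into $B_V$ and its complement according to the factor $\1_{B_V}$. On $B_V^c$ the integrand reduces to $-\scapro{v}{v^\ast}\1_{B_{\R}}(\scapro{v}{v^\ast})$, whose modulus is at most $1$; since a L\'evy measure is finite outside any ball about the origin (\cite[Pro.5.4.5]{Linde}), this part is integrable. On $B_V$ the integrand is supported on $\{v:\abs{\scapro{v}{v^\ast}}>1\}$ and there has modulus at most $\norm{v^\ast}$; because this set has $\xi$-measure bounded by $\int_V(\scapro{v}{v^\ast}^2\wedge 1)\,\xi(dv)<\infty$, which is finite by condition~(i) in the definition of a L\'evy measure, this part is integrable too. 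Together these two bounds show that $\Delta_\xi$ is well defined.

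For the limit assertion, I would first record that a weak$^\ast$-null sequence is norm-bounded by the uniform boundedness principle, so that $C:=\max\{1,\sup_{n}\norm{v_n^\ast}\}<\infty$, and that $\scapro{v}{v_n^\ast}\to 0$ for every fixed $v\in V$. Splitting again along $B_V$ and $B_V^c$, the contribution over $B_V^c$ is disposed of by dominated convergence: the integrand $-\scapro{v}{v_n^\ast}\1_{B_{\R}}(\scapro{v}{v_n^\ast})$ converges to $0$ pointwise, is dominated by the constant $1$, and $\xi$ is finite on $B_V^c$.

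The hard part will be the contribution over $B_V$, since there $\xi$ may be infinite near the origin and, in a general Banach space, no $\xi$-integrable majorant such as $\norm{v}^2\1_{B_V}(v)$ need exist. The decisive observation is that the support set $A_n:=\{v\in B_V:\abs{\scapro{v}{v_n^\ast}}>1\}$ is automatically separated from the origin: from $\abs{\scapro{v}{v_n^\ast}}\le C\norm{v}$ one gets $A_n\subseteq\{v:1/C<\norm{v}\le 1\}$, a region carrying finite $\xi$-mass. On this fixed finite measure $\1_{A_n}\to 0$ pointwise, because $\scapro{v}{v_n^\ast}\to 0$ eventually expels each $v$ from $A_n$, so dominated convergence gives $\xi(A_n)\to 0$. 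Combined with the uniform bound $\abs{\scapro{v}{v_n^\ast}}\le C$ on $B_V$, the $B_V$-contribution, being supported on $A_n$, is at most $C\,\xi(A_n)$ in modulus and hence tends to $0$. Adding the two contributions yields $\Delta_\xi(v_n^\ast)\to 0$; I expect the $B_V$-estimate, and in particular the realisation that the disagreement set $A_n$ cannot accumulate at the origin, to be the crux of the argument.
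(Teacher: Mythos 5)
Your proof is correct, but the second half takes a genuinely different route from the paper's. Both arguments use the same decomposition of the disagreement set into $B_V\cap\{\abs{\scapro{v}{v^\ast}}>1\}$ and $B_V^c\cap\{\abs{\scapro{v}{v^\ast}}\le 1\}$, and both dispose of the $B_V^c$-part by dominated convergence on a set of finite $\xi$-mass. For the $B_V$-part, however, the paper first dominates $\abs{\scapro{v}{v_n^\ast}}$ by $\abs{\scapro{v}{v_n^\ast}}^2$ on $B_V$ and then proves the stronger statement $\int_{B_V}\abs{\scapro{v}{v_n^\ast}}^2\,\xi(dv)\to 0$ by the same symmetrisation device as in Lemma \ref{le.thelemma}: it forms the symmetric L\'evy measure $(\xi+\xi^-)\circ m_\alpha^{-1}$, invokes the associated infinitely divisible law $\theta$, bounds the truncated second moment by $-\tfrac{3\alpha^2}{2}\ln\phi_\theta(v_n^\ast)$ via $1-\cos\beta\ge\tfrac13\beta^2$, and uses weak$^\ast$ sequential continuity of $\phi_\theta$. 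You instead keep the first power and observe that the support set $A_n=\{v\in B_V:\abs{\scapro{v}{v_n^\ast}}>1\}$ is contained in the annulus $\{1/C<\norm{v}\le 1\}$, which carries finite $\xi$-mass by the same fact (\cite[Pro.5.4.5]{Linde}) that gives $\xi(B_V^c)<\infty$; then $\xi(A_n)\to 0$ by dominated convergence and the contribution is at most $C\,\xi(A_n)$. Your observation that the disagreement set cannot accumulate at the origin is exactly the right one, and it makes the argument more elementary: no characteristic functions and no existence of the associated infinitely divisible measure are needed, only condition (i) of the definition of a L\'evy measure and its finiteness outside balls. What the paper's route buys in exchange is the quantitatively stronger conclusion that the truncated weak second moments along $(v_n^\ast)$ vanish, a fact of independent use in that circle of arguments; for the statement of the lemma itself your weaker intermediate estimate suffices.
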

\begin{proof}
Let $v^\ast\in V^\ast$ and define $D(v^\ast):=\{v\in V:\, \abs{\scapro{v}{v^\ast}}\le 1\}$. It follows for each $v\in V$  that
\begin{align}\label{eq.correction}
 \abs{\scapro{v}{v^\ast}}\abs{\1_{B_V}(v)-\1_{B_{\R}}(\scapro{v}{v^\ast})}
&=  \abs{\scapro{v}{v^\ast}}\Big(\1_{B_V\cap (D(v^\ast))^c}(v) + \1_{B_V^c \cap D(v^\ast)}(v)  \Big)\nonumber \\
&\le \abs{\scapro{v}{v^\ast}}^2 \1_{B_V}(v) + \1_{B_V^c}(v).
\end{align}
Proposition 5.4.5 in \cite{Linde} gurantees
\begin{align}\label{eq.estimate-correction}
\begin{split} \int_{V}\abs{\scapro{v}{v^\ast}}\abs{\1_{B_V}(v)-\1_{B_{\R}}(\scapro{v}{v^\ast})}\,\xi(dv)
\le \int_{B_V}  \scapro{v}{v^\ast}^2\,\xi(dv)+\xi(B_V^c)<\infty,
\end{split}
\end{align}
which shows that the function $\Delta_\xi$ is well defined.
Let $(v_n^\ast)_{n\in\N}\subseteq V^\ast$ be a sequence converging weakly$^\ast$ to $0$. An analogue estimate as in \eqref{eq.correction} shows for all $n\in\N$:
\begin{align}\label{eq.correction-weakseq}
\begin{split}
&\int_V\abs{\scapro{v}{v^\ast_n}}\abs{\1_{B_V}(v)-\1_{B_{\R}}(\scapro{v}{v^\ast_n})}
\,\xi(dv) \\
&\qquad\qquad \le \int_{B_V} \abs{\scapro{v}{v^\ast_n}}^2 \,\xi(dv) +\int_{B_V^c}  \1_{D(v_n^\ast)}(v)\abs{\scapro{v}{v^\ast_n}}\,\xi(dv).
\end{split}
\end{align}
For $\alpha:=\sup_{n\in\N}\norm{v_n^\ast}$ define the mapping $m_\alpha\colon V\to V$ by
$m_\alpha(v):=\alpha^{-1}v$. Then $\tilde{\xi}_\alpha:=(\xi+\xi^{-})\circ m_\alpha^{-1}$ is a L{\'e}vy measure on $\Borel(V)$, and thus there exists an infinitely divisible  measure $\theta$ on $\Borel(V)$ with
characteristics $(0,0,\tilde{\xi}_\alpha)$.  The inequality $1-\cos(\beta)\ge \tfrac{1}{3}\beta^2$ for all $\abs{\beta}\le 1$ implies by using the symmetry of $\tilde{\xi}_\alpha$  that the characteristic function of $\theta$ satisfies for each $n\in\N$:
\begin{align*}
  \phi_{\theta}(v^\ast_n)
  &= \exp\left(-\int_V \big(1- \cos(\scapro{v}{v^\ast_n})\big)\, (\xi+\xi^-)\circ m_\alpha^{-1}(dv)\right)\\
& \le  \exp\left(-\int_{B_V}\big( 1- \cos(\alpha^{-1}\scapro{v}{v^\ast_n})\big)\, (\xi+\xi^-)(dv)\right)\\
& \le  \exp\left(-\tfrac{2}{3\alpha^2}\int_{B_V} \abs{\scapro{v}{v^\ast_n}}^2\, \xi(dv)\right).
\end{align*}
Since the characteristic function $\phi_\theta$ is weakly$^\ast$ sequentially continuous we obtain
\begin{align*}
  \int_{B_V} \abs{\scapro{v}{v^\ast_n}}^2 \xi(dv)
  \le -\tfrac{3\alpha^2}{2}\ln \left(\phi_{\theta}(v^\ast_n)\right)
  \to 0 \qquad\text{as $n\to\infty$.}
\end{align*}
Since $\xi(B_V^c)<\infty$ and $\1_{D(v_n^\ast)}(v)\abs{\scapro{v}{v^\ast_n}}\le
 1$ for all $n\in\N$ and $v\in B_V^c$, Lebesgue's theorem of dominated convergence implies
\begin{align*}
  \int_{B_V^c}  \1_{D(v_n^\ast)}(v)\abs{\scapro{v}{v^\ast_n}}\,\xi(dv)
  \to 0  \qquad\text{as $n\to\infty$,}
\end{align*}
which completes the proof by  \eqref{eq.correction-weakseq}.
\end{proof}

\begin{lemma}\label{le.drift-cond-Pettis}
Let   $f\colon [0,T]\to \L(U,V)$ be a function which is weakly in $\G([0,T];U^\ast)$.
Then  Condition \eqref{cond.drift} is satisfied if and only if
\begin{enumerate}
\item[{\rm (1)}]  for every sequence  $(v_n^\ast)_{n\in\N}\subseteq V^\ast$ converging weakly$^\ast$ to $0$ and  $A\in \Borel([0,T])$ we have
\begin{align}\label{cond.drift-2}
  \lim_{n\to\infty} \int_A a(f^\ast(s)v^\ast_n)\,ds =0.
\end{align}
\end{enumerate}
If $L$ is a genuine L{\'e}vy process with characteristics $(b,Q,\mu)$ then its cylindrical characteristics $(a,Q,\mu)$ satisfies $a=\scapro{b}{\cdot}-\Delta_\mu$ and (1) is equivalent to
\begin{enumerate}
  \item[{\rm (2)}] \inlineequation[cond.drift-Pettis]{\text{the mapping $t\mapsto f(t)b$ is Pettis integrable.}\hfill}
\end{enumerate}
\end{lemma}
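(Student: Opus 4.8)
The plan is to prove the three assertions in turn. For the equivalence \eqref{cond.drift} $\Leftrightarrow$ (1), the forward direction is immediate, since Condition \eqref{cond.drift} asserts $T_av_n^\ast\to 0$ weakly in $L^1([0,T];\R)$ for every weakly$^\ast$ null sequence $(v_n^\ast)$, and testing this against the indicator $\1_A\in L^\infty([0,T];\R)$ is exactly \eqref{cond.drift-2}. For the converse I would upgrade the admissible test functions from indicators to arbitrary $h\in L^\infty([0,T];\R)$. A weakly$^\ast$ convergent sequence $(v_n^\ast)$ is norm bounded, say $\sup_n\norm{v_n^\ast}\le\alpha$, and together with \eqref{eq.funiformbounded} and the fact that $a$ maps bounded sets into bounded sets, recorded in \eqref{eq.drift-bounded-bounded}, this produces a constant $C$ with $\abs{a(f^\ast(s)v_n^\ast)}\le C$ for all $s\in[0,T]$ and $n\in\N$. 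Approximating $h$ uniformly by simple functions $h_m$, the tail $\int_0^T(h-h_m)(s)a(f^\ast(s)v_n^\ast)\,ds$ is bounded by $CT\norm{h-h_m}_\infty$, while $\int_0^T h_m(s)a(f^\ast(s)v_n^\ast)\,ds\to 0$ by applying \eqref{cond.drift-2} to each level set of $h_m$; a routine $\epsilon$-argument then yields \eqref{cond.drift}.

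In the genuine L{\'e}vy case the identity $a=\scapro{b}{\cdot}-\Delta_\mu$ is obtained by comparing the explicit expression for $a$ in Example \ref{ex.genuineLevy} (with $\nu=\mu$) to the definition of $\Delta_\mu$ in Lemma \ref{le.correction-term}: the integral correction term of the former is precisely $-\Delta_\mu$. Inserting this and using $\scapro{b}{f^\ast(s)v_n^\ast}=\scapro{f(s)b}{v_n^\ast}$ I would split
\begin{align*}
  \int_A a(f^\ast(s)v_n^\ast)\,ds
  =\int_A\scapro{f(s)b}{v_n^\ast}\,ds-\int_A\Delta_\mu(f^\ast(s)v_n^\ast)\,ds.
\end{align*}
For fixed $s$ the functionals $f^\ast(s)v_n^\ast$ converge weakly$^\ast$ to $0$ in $U^\ast$, because $\scapro{u}{f^\ast(s)v_n^\ast}=\scapro{f(s)u}{v_n^\ast}\to 0$ for every $u\in U$; since here $\mu=\nu$ is a genuine L{\'e}vy measure, Lemma \ref{le.correction-term} applies and gives $\Delta_\mu(f^\ast(s)v_n^\ast)\to 0$ pointwise in $s$.

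To pass this convergence under the integral I would bound the correction term uniformly: by \eqref{eq.correction}--\eqref{eq.estimate-correction}, $\abs{\Delta_\mu(w^\ast)}\le\int_{B_U}\scapro{u}{w^\ast}^2\,\mu(du)+\mu(B_U^c)$, and for $\norm{w^\ast}\le m\alpha$ the quadratic integral is dominated by $\max(1,(m\alpha)^2)\int_U(\scapro{u}{w^\ast}^2\wedge 1)\,\mu(du)$, which is bounded uniformly on the ball by a scaling argument based on Lemma \ref{le.thelemma}. Dominated convergence then forces $\int_A\Delta_\mu(f^\ast(s)v_n^\ast)\,ds\to 0$, so \eqref{cond.drift-2} holds if and only if $\int_A\scapro{f(s)b}{v_n^\ast}\,ds\to 0$ for every weakly$^\ast$ null $(v_n^\ast)$ and every $A$. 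As $s\mapsto\scapro{f(s)b}{v^\ast}$ is bounded, $f(\cdot)b$ is scalarly integrable, and this last condition is exactly Condition (1) for the pure-drift symbol $\scapro{b}{\cdot}$; by the first part of the lemma it is equivalent to the weak$^\ast$-weak sequential continuity of $v^\ast\mapsto\scapro{f(\cdot)b}{v^\ast}$, which by \cite[Th.4.1]{Musial} is the Pettis integrability of $f(\cdot)b$. I expect the main obstacle to be precisely the uniform bound on $\Delta_\mu(f^\ast(s)v_n^\ast)$ needed for dominated convergence, as it hinges on controlling $\int_U(\scapro{u}{w^\ast}^2\wedge 1)\,\mu(du)$ uniformly over balls in $U^\ast$ through Lemma \ref{le.thelemma}.
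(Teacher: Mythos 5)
Your proposal is correct and follows essentially the same route as the paper: the ``standard arguments'' for the equivalence of \eqref{cond.drift} with \eqref{cond.drift-2}, the identity $a=\scapro{b}{\cdot}-\Delta_\mu$, the splitting of the integral, pointwise convergence of $\Delta_\mu(f^\ast(\cdot)v_n^\ast)$ from Lemma \ref{le.correction-term}, dominated convergence, and the appeal to \cite[Th.4.1]{Musial} together with separability of $V$. The only cosmetic difference is that you obtain the uniform bound on $\sup_{n,s}\abs{\Delta_\mu(f^\ast(s)v_n^\ast)}$ by a scaling argument via Lemma \ref{le.thelemma}, whereas the paper cites \eqref{eq.estimate-correction} and \cite[Pro.5.4.5]{Linde} directly; both are valid.
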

\begin{proof}
  Since $a\colon U^\ast\to \R$ maps bounded sets to bounded sets according to \eqref{eq.drift-bounded-bounded},
Condition \eqref{cond.drift-2} implies \eqref{cond.drift} by standard arguments.

For the second part assume that $L$ is a genuine L{\'e}vy process and we adopt the notation $\Delta_\mu$ from Lemma \ref{le.correction-term} but for the L{\'e}vy measure $\mu$ on $\Borel(U)$.
The first entries of the classical characteristics $(b,Q,\mu)$ and of the cylindrical characteristics $(a,Q,\mu)$ obey
$\scapro{b}{u^\ast}=a(u^\ast)+\Delta_\mu(u^\ast)$ for all $u^\ast\in U^\ast$,
which implies for all $v^\ast\in V^\ast$ and $h\in L^{\infty}([0,T];\R)$ the identity
\begin{align}\label{eq.banda}
    \int_0^T h(s) \scapro{f(s)b}{v^\ast}\,ds
  =\int_0^T h(s)\, a\big(f^\ast(s) v^\ast\big)\,ds + \int_0^T h(s)\, \Delta_\mu\big(f^\ast(s)v^\ast\big)\,ds .
\end{align}
Let $m:=\sup_{s\in [0,T]}\norm{f^\ast(s)}_{V^\ast\to U^\ast}$ be defined as in \eqref{eq.funiformbounded} and let $(v_n^\ast)_{n\in\N}\subseteq V^\ast$ be a sequence weakly$^\ast$ converging to $0$ with setting $\alpha:=\sup_{n\in\N}\norm{v_n^\ast}$. From \eqref{eq.estimate-correction} and \cite[Pro.5.4.5]{Linde} we conclude
\begin{align*}
  \sup_{n\in\N}\sup_{s\in [0,T]} \abs{\Delta_\mu(f^\ast(s)v_n^\ast)}&\le
 \sup_{\norm{u^\ast}\le \alpha m} \int_{B_U} \scapro{u}{u^\ast}^2\, \mu(du)+\mu(B_U^c)<\infty.
\end{align*}
By applying Lebesgue's theorem of dominated convergence and Lemma \ref{le.correction-term}, we obtain for every $h\in L^{\infty}([0,T];\R)$ that
\begin{align}\label{eq.Delta-0}
 \abs{ \int_0^T h(s)\Delta_\mu(f^\ast(s)v_n^\ast)\,ds}
 \le \norm{h}_\infty   \int_0^T \abs{\Delta_\mu(f^\ast(s)v_n^\ast)}\,ds\to 0.
\end{align}
Let $S_b\colon V^\ast\to L^1([0,T];\R)$ denote the mapping defined by $S_b v^\ast=\scapro{f(s)b}{v^\ast}$. It follows from \eqref{eq.banda} and \eqref{eq.Delta-0}
that $S_b$ is weak$^\ast$-weakly sequentially continuous if and only if the mapping $T_a$, defined in \eqref{cond.drift}, is weak$^\ast$-weakly sequentially continuous. Since the mapping
$t\mapsto f(t)b$ is Pettis integrable if and only if $S_b$ is weak$^\ast$-weakly  continuous according to \cite[Th.4.1]{Musial} and since $V$ is separable, the proof is completed.
\end{proof}

\begin{proof} (Theorem \ref{th.integrability}).  Sufficiency:
according to \eqref{eq.L-decompose} the cylindrical L{\'e}vy process $L$ can be decomposed into $L(t)=W(t)+P(t)$ for all $t\ge 0$, where $W$ and $P$ are independent, cylindrical L{\'e}vy processes with characteristics $(0,Q,0)$ and $(a,0,\mu)$,
respectively. For the cylindrical integral $Z_A$ of $f$ on $A\in\Borel([0,T])$ we obtain \begin{align*}
  Z_Av^\ast=Z^W_A v^\ast+ Z^P_A v^\ast \qquad\text{for all }v^\ast\in V^\ast,
\end{align*}
where $Z^W_A$ is the cylindrical integral w.r.t.\ $W$ on $A$ and $Z^P_A$ w.r.t.\ $P$ on $A$.

Lemma \ref{le.Zdistribution} implies that the cylindrical random variable $Z^W_A$ is Gaussian with
covariance $r_A$ defined in \eqref{eq.3charZ_f.2}. Since the covariance satisfies
\begin{align*}
 r_A(v^\ast)
   \le  \int_0^T \scapro{f^\ast(s)v^\ast}{Qf^\ast(s)v^\ast}\, ds= \scapro{v^\ast}{Rv^\ast} \qquad\text{for all }v^\ast\in V^\ast,
\end{align*}
Theorem \ref{th.Gauss-dominated} implies that the cylindrical distribution of $Z^W_A$ extends to a measure on $\Borel(V)$. It follows from Theorem IV.2.5 in  \cite{Vaketal} that there exists a random variable $I^W_A\in L^0_P(\Omega;V)$ with $\scapro{I^W_A}{v^\ast}=Z_A^Wv^\ast$ for all $v^\ast\in V^\ast$.

According  to Lemma \ref{le.Zdistribution} the cylindrical random variable $Z_A^P$ has
the L{\'e}vy measure $\nu_A$, which  obeys for every set $C\in \Z(V)$ the inequality
\begin{align*}
  \nu_{A}(C)=\int_A \int_U \1_C(f(s)u)\, \nu(du)\,ds
  \le \int_0^T \int_U \1_C(f(s)u)\, \nu(du)\,ds= \nu(C).
\end{align*}
Theorem \ref{th.Levy-dominated} implies that $\nu_{A}$ extends to a L{\'e}vy measure on $\Borel(V)$,
which is also denoted by $\nu_A$. Thus, there exists a probability measure $\theta_{A}$ on $\Borel(V)$ with characteristic function
\begin{align}\label{eq.char-rho}
  \phi_{\theta_A}(v^\ast)
  &=\exp\left( \int_V \left(e^{i\scapro{v}{v^\ast}}-1-i\scapro{v}{v^\ast}\1_{B_V}(v)\right)\, \nu_{A}  (dv)\right)
   \qquad\text{for all }v^\ast\in V^\ast.
\end{align}
Define the function
\begin{align*}
  c_A\colon V^\ast\to \R,\qquad
  c_A(v^\ast):= \int_A a(f^\ast(s)v^\ast)\,ds + \Delta_{\nu_A}(v^\ast),
\end{align*}
where the function $\Delta_{\nu_A}$ is defined in Lemma \ref{le.correction-term}.
Let $X_A$ be a $V$-valued random variable with probability distribution $\theta_A$.
 It follows from Lemma \ref{le.Zdistribution}  and \eqref{eq.char-rho} that
 \begin{align}\label{eq.Z_A=dirac-conv-X_A}
   Z_A^Pv^\ast\stackrel{d}{=} c_A(v^\ast)+ \scapro{X_A}{v^\ast} \qquad\text{for all }v^\ast\in V^\ast,
 \end{align}
where $\stackrel{d}{=}$ denotes equality in distribution. Linearity and continuity of $Z_A^P$ and $\scapro{X_A}{\cdot}$ implies that $c_A\in V^{\ast\ast}$.  It follows from Condition \eqref{cond.drift}, Lemma \ref{le.correction-term} and Lemma \ref{le.drift-cond-Pettis} that $c_A$ is weakly$^\ast$ sequentially continuous.
Since $V$ is separable, we obtain that $c_A$ is weakly$^\ast$ continuous (\cite[Co.2.7.10]{Megginson}), which implies $c_A\in V$. Consequently, we can define the Dirac measure $\delta_{c_A}$ at the point $c_A\in V$. It follows from \eqref{eq.Z_A=dirac-conv-X_A} that the cylindrical distribution of $Z_A^P$ extends to the convolution $\delta_{c_A}\ast \theta_A$.
Consequently,  \cite[Th.IV.2.5]{Vaketal} guarantees that there exists a random variable $I_A^P\in L^0_P(\Omega;V)$ with $\scapro{I_A^P}{v^\ast}=Z_A^Pv^\ast$ for all
$v^\ast\in V^\ast$, which shows the stochastic integrability of $f$.

Necessity: let $I_A\in L^0_P(\Omega;V)$ denote the stochastic integral of $f$ on $A$ and let $(c_A,S_A, \xi_A)$ be the characteristics of the
infinitely divisible random variable $I_A$. Then, due to the uniqueness of the L{\'e}vy-Khintchine formula in $\R$, for each $v^\ast\in V^\ast$ the characteristics of the real valued random variables $\scapro{I_A}{v^\ast}$ and $Z_Av^\ast$ coincide which results in
\begin{align}
\scapro{c_A}{v^\ast}-\Delta_{\xi_A}(v^\ast)&=
 \int_A a\left( f^\ast(s)v^\ast\right)\, ds,\label{eq.same0}\\
\scapro{v^\ast}{S_A v^\ast}&= \int_A \scapro{f^\ast(s)v^\ast}{Qf^\ast(s)v^\ast}\,ds,\label{eq.same1}\\
\xi_A \circ (v^\ast)^{-1}
&= \Big(\big(\mu\otimes \leb\big)\circ \chi_A^{-1}\Big)\circ (v^\ast)^{-1}.\label{eq.same2}
\end{align}
Here, we obtain the characteristics of $\scapro{I_A}{v^\ast}$ on the left hand side by a standard calculation for the transform of an infinitely divisible measure under a linear mapping (see e.g.\ \cite[Pro.11.10]{Sato}), whereas the characteristics of $Z_Av^\ast$ on the right hand side is given in Lemma \ref{le.Zdistribution}.  Equation \eqref{eq.same0} shows Condition \eqref{cond.drift} due to Lemma \ref{le.correction-term} and Lemma \ref{le.drift-cond-Pettis}. By choosing $A:=[0,T]$, identity  \eqref{eq.same1} implies Condition \eqref{cond.Covariance-int}.

By using \eqref{eq.same2} and  \cite[Pro.11.10]{Sato}, it follows from linearity that the
$\R^n$-valued infinitely divisible random variables $\big(\scapro{I_A}{v_1^\ast},\dots, \scapro{I_A}{v_n^\ast}\big)$ and $\big(Z_Av^\ast_1,\dots, Z_Av^\ast_n)$ have the same L{\'e}vy measures for all $v_1^\ast,\dots, v_n^\ast \in V^\ast$ and $n\in\N$, i.e.\
\begin{align*}
  \xi_A\circ \pi_{v_1^\ast,\dots, v_n^\ast}^{-1}= \Big(\big(\mu\otimes \leb\big)\circ \chi_A^{-1}\Big) \circ \pi_{v_1^\ast,\dots,v_n^\ast}^{-1}.
\end{align*}
Consequently, by choosing $A=[0,T]$ we have $\nu=(\mu\otimes \text{leb})\circ \chi_A^{-1}$ and the  image cylindrical measure $\nu$ extends to the L{\'e}vy measure $\xi_A$.
\end{proof}

\begin{remark}
  In the work \cite{RiedleGaans} together with van Gaans, we developed a stochastic integral for deterministic integrands w.r.t.\ martingale valued measures in Banach spaces, i.e.\ in particular with respect to the compensated Poisson random measure of a classical L{\'e}vy process in a Banach space $U$. The integrability of a function is described in terms of the appropriate convergence of a random series, which is very similar to the case of $\gamma$-radonifying operators. This approach cannot be applied to cylindrical L{\'e}vy processes as they do not satisfy an It{\^o}-L{\'e}vy decomposition in the underlying Banach space but only in $\R$, which is then depending on the argument in $U^\ast$. Both integrals in the current work and in \cite{RiedleGaans} underly a kind of a stochastic version of the Pettis integral.
\end{remark}

As mentioned before, we simplify the conditions in Theorem \ref{th.integrability} in some more
specific spaces. We begin with the most important case of a Hilbert space.
\begin{theorem}\label{th.intHilbert}
Assume that $V$ is a Hilbert space with orthonormal basis $(e_k)_{k\in\N}$ and let  $f\colon [0,T]\to \L(U,V)$ be a function which is weakly in $\G([0,T];U^\ast)$. Then $f$ is stochastically integrable if and only if the following is satisfied:
\begin{enumerate}
\item[{\rm (1)}] the  mapping $T_a$ is weak-weakly sequentially continuous where
\begin{align}\label{cond.Hilbert-drift}
  T_a\colon V^\ast\to L^1([0,T];\R),\qquad T_a v^\ast=a(f^\ast(\cdot)v^\ast);
\end{align}
\item[{\rm (2)}] \inlineequation[cond.Hilbert-Covariance-int]{\displaystyle \int_0^T \text{\rm tr}\left[f(s)Qf^\ast (s) \right]\,ds <\infty;\hfill}
\item[{\rm (3)}]\inlineequation[cond.Hilbert-Levy-int]{\displaystyle    \limsup_{m\to\infty}\sup_{n\ge m}
   \int_0^T \int_U \left(\sum_{k=m}^n \scapro{u}{f^\ast(s)e_k}^2 \wedge 1\right)\, \mu(du)\,ds=0.\hfill}
\end{enumerate}

\end{theorem}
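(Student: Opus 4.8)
The strategy is to invoke Theorem~\ref{th.integrability} and to translate each of its three abstract conditions into the stated Hilbert-space form; the three translations are essentially independent. Condition~(1) requires no work: in a separable Hilbert space we identify $V^\ast$ with $V$, and the weak$^\ast$ topology then coincides with the weak topology, so \eqref{cond.Hilbert-drift} is literally \eqref{cond.drift}.

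For Condition~(2) I would use the classical fact that a positive, symmetric operator on a separable Hilbert space is the covariance operator of a Gaussian measure if and only if it is trace class. Polarising the nonnegative quadratic form $v^\ast\mapsto\int_0^T\scapro{f^\ast(s)v^\ast}{Qf^\ast(s)v^\ast}\,ds$ produces the candidate operator $R$ of \eqref{cond.Covariance-int}. Since every summand is nonnegative, Tonelli's theorem gives $\sum_k\scapro{e_k}{Re_k}=\int_0^T\sum_k\scapro{f^\ast(s)e_k}{Qf^\ast(s)e_k}\,ds=\int_0^T\text{\rm tr}[f(s)Qf^\ast(s)]\,ds$. Hence a Gaussian covariance operator realising this form exists if and only if this trace is finite, which is exactly \eqref{cond.Hilbert-Covariance-int}; conversely any Gaussian covariance operator realising the form is trace class and so has finite trace.

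The substantial part is Condition~(3), namely that $\nu=(\mu\otimes\leb)\circ\chi^{-1}$ extends to a genuine L{\'e}vy measure on $\Borel(V)$ if and only if \eqref{cond.Hilbert-Levy-int} holds. Here I would rely on the Hilbert-space characterisation that a $\sigma$-finite measure $\tilde\nu$ on $V$ with $\tilde\nu(\{0\})=0$ is a L{\'e}vy measure precisely when $\int_V(\norm{v}^2\wedge 1)\,\tilde\nu(dv)<\infty$. Writing $P_{m,n}$ for the orthogonal projection onto $\mathrm{span}\{e_m,\dots,e_n\}$ and $P_{m,\infty}$ for the projection onto $\overline{\mathrm{span}}\{e_k:k\ge m\}$, the change of variables $\chi(s,u)=f(s)u$ identifies the integrand in \eqref{cond.Hilbert-Levy-int} with $\int_V(\norm{P_{m,n}v}^2\wedge 1)\,\tilde\nu(dv)$, since $\sum_{k=m}^n\scapro{u}{f^\ast(s)e_k}^2=\norm{P_{m,n}f(s)u}^2$. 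For the necessity of \eqref{cond.Hilbert-Levy-int} I would then argue by dominated convergence: monotonicity of $t\mapsto t\wedge 1$ yields $\norm{P_{m,n}v}^2\wedge 1\le\norm{P_{m,\infty}v}^2\wedge 1$ uniformly in $n\ge m$, and because $\sum_k\scapro{v}{e_k}^2=\norm{v}^2<\infty$ the majorant $\norm{P_{m,\infty}v}^2\wedge 1$ decreases to $0$ pointwise while being dominated by $\norm{v}^2\wedge 1\in L^1(\tilde\nu)$, so the supremum over $n\ge m$ tends to $0$.

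For sufficiency---which I expect to be the main obstacle---I would first extract a uniform bound $\sup_{n\in\N}\int_0^T\int_U(\sum_{k=1}^n\scapro{u}{f^\ast(s)e_k}^2\wedge 1)\,\mu(du)\,ds<\infty$ from \eqref{cond.Hilbert-Levy-int}: the limsup condition plays the role of a Cauchy criterion on the blocks via the subadditivity $(a+b)\wedge1\le(a\wedge1)+(b\wedge1)$, while finiteness of the leading finite block follows from \eqref{eq.cylLevymeasure} and Lemma~\ref{le.thelemma} together with the uniform bound $\sup_s\norm{f^\ast(s)}<\infty$. With this bound in hand I would realise the extension by a Prohorov-type projective-limit argument, exactly in the spirit of the construction in Theorem~\ref{th.Levy-dominated}: the images $\nu\circ\pi_{e_1,\dots,e_n}^{-1}$, restricted to the complements of origin-neighbourhoods, form a consistent and tight family, hence admit a genuine $\sigma$-finite limit measure $\tilde\nu$ on $\Borel(V)$ extending $\nu$. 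The vanishing of the tail blocks supplies precisely the tightness needed to pass to the limit and to guarantee $\int_V(\norm{v}^2\wedge 1)\,\tilde\nu(dv)<\infty$, so that $\tilde\nu$ is a L{\'e}vy measure. The delicate point throughout is to keep the construction confined to the regions $\{\norm{v}>\delta\}$, where the $\sigma$-finite measure is finite and Prohorov's theorem applies, just as in the proof of Theorem~\ref{th.Levy-dominated}.
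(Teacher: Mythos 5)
Your handling of conditions (1) and (2) and of the necessity of (3) matches the paper's proof: the identification of the weak$^\ast$ and weak topologies, the trace-class characterisation of Gaussian covariance operators combined with Tonelli's theorem (the paper additionally invokes the closed graph theorem to obtain boundedness of $R$ before discussing its trace), and the deduction of \eqref{cond.Hilbert-Levy-int} from $\int_V(\norm{v}^2\wedge 1)\,\nu(dv)<\infty$ are all either identical to the paper's argument or routine completions of it.

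The sufficiency of (3) is where you genuinely diverge, and where the proposal has a gap. The paper does not run a projective-limit construction on the image measures $\nu\circ\pi_n^{-1}$. Instead it symmetrises the cylindrical integral ($\tilde Z_A=Z_A+Z_A'$ with characteristics $(0,0,\nu_A+\nu_A^-)$), uses that the finite-rank orthogonal projections $\pi_n$ are Hilbert--Schmidt so that each $\tilde Z_A\circ\pi_n^\ast$ is induced by a genuine infinitely divisible \emph{probability} measure $\theta_n$, bounds the Gaussian-averaged quantity $1-\Re\phi_{\theta_n}$ over tail blocks by $4\int_V\bigl(\sum_{k=m}^n\scapro{v}{e_k}^2\wedge 1\bigr)\nu(dv)$, and invokes Parthasarathy's compactness criterion for the family $\{\theta_n\}$; weak convergence of $\theta_n$ to the cylindrical distribution of $\tilde Z_A$ then produces a genuine random variable whose L\'evy measure extends $\nu_A+\nu_A^-$, and Theorem \ref{th.Levy-dominated} transfers this back to $\nu_A$. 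Your alternative---a Prohorov projective limit on $\{\norm{v}>\delta\}$ in the spirit of Theorem \ref{th.Levy-dominated}, with tightness extracted from \eqref{cond.Hilbert-Levy-int} rather than from a dominating L\'evy measure---is plausible, but its crucial step is only asserted. You must actually exhibit, for each $\delta>0$ and $\epsilon>0$, a single compact set $K\subseteq V$ with $\sup_n(\nu\circ\pi_n^{-1})(K^c\cap\{\norm{\cdot}>\delta\})<\epsilon$ (after lifting the image measures from $\R^n$ to measures supported on $\mathrm{span}\{e_1,\dots,e_n\}\subseteq V$; ``tight family'' is not well-posed before this lifting). That requires two separate estimates: $\sup_n\nu(\norm{\pi_n v}>R)\to 0$ as $R\to\infty$, which does \emph{not} follow from the uniform bound on $\int(\norm{\pi_nv}^2\wedge 1)\,d\nu$ alone and needs a block-splitting argument, and the equi-small-tails bound $\sup_{n\ge m}\nu\bigl(\sum_{k=m}^n\scapro{v}{e_k}^2>\epsilon^2\bigr)\to 0$ as $m\to\infty$. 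Both can be supplied by Chebyshev-type estimates from \eqref{cond.Hilbert-Levy-int}, so the route is repairable, but as written the hardest part of the theorem is claimed rather than proved.
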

\begin{proof}
The closed graph theorem shows that
\begin{align*}
  \scapro{v^\ast}{Rw^\ast}=\int_0^T \scapro{f^\ast(s)v^\ast}{Qf^\ast(s)w^\ast}\,ds
  \qquad\text{for all }v^\ast, w^\ast\in V^\ast,
\end{align*}
defines a positive, symmetric and bounded operator $R\colon V^\ast\to V$. By applying Tonelli's theorem we can conclude that the operator $R$ is of trace class if and only if Condition \eqref{cond.Hilbert-Covariance-int} is satisfied. Since the space of Gaussian covariance operators in Hilbert spaces coincide with the space of trace class operators by \cite[Th.IV.2.4]{Vaketal}, we have established the equivalence of Conditions \eqref{cond.Covariance-int} and \eqref{cond.Hilbert-Covariance-int}.

If $f$ is stochastically integrable then Theorem \ref{th.integrability} implies
that the cylindrical measure $\nu$, defined in \eqref{cond.Levy-int}, extends to a  measure and it is a L{\'e}vy measure in $V$. Since $V$ is a Hilbert space, the latter implies (see \cite[Th.VI.4.10]{Para}), that
\begin{align*}
\int_V \left( \norm{v}^2 \wedge 1\right) \, \nu(dv)<\infty,
\end{align*}
which shows Condition \eqref{cond.Hilbert-Levy-int}.

It remains to show that \eqref{cond.Hilbert-Levy-int} implies Condition \eqref{cond.Levy-int}, for which we can assume that the cylindrical characteristics of $L$ is of the form $(a,0,\mu)$.
We define the space $V_n:=\text{span}\{e_1,\dots, e_n\}$ and we denote by $\pi_n\colon V\to V$ the orthogonal projection on $V_n$ for each $n\in\N$. Let  $Z_A$ denote the cylindrical integral of $f$ on $A\in \Borel([0,T])$, which has the  characteristics $(b_A, 0, \nu_A)$ according to  Lemma \ref{le.Zdistribution}. If $Z_A^\prime $ denotes an independent copy of $Z_A$ then $\tilde{Z}_A:=Z_A + Z_A^\prime$ is a cylindrical random variable with characteristics $(0,0, \nu_A+ \nu_A^{-})$.
Since $\pi_n$ is a Hilbert-Schmidt operator, the cylindrical distribution of
$\tilde{Z}_A\circ \pi_n^\ast $ extends to a probability  measure $\theta_n$ on  $\Borel(V)$ due to \cite[Th.VI.5.2]{Vaketal}, which is infinitely divisible with characteristics $(0,0,\xi_n)$ where $\xi_n:=(\nu_A+\nu_A^{-})\circ \pi_n^{-1}$.
By using the inequality $1-\cos\beta\le 2(\beta^2\wedge 1)$ for all $\beta\in\R$ we obtain for every $v^\ast\in V $ that
\begin{align*}
 1- \phi_{\theta_n}(v^\ast)&=
  1-\exp\left( \int_{V} \big(\cos (\scapro{v}{v^\ast})-1\big)\, \xi_n(dv)\right)\\
 &\le \int_V \big(1-\cos(\scapro{v}{v^\ast})\big)\, \xi_n(dv)
 \le 2 \int_V \left(\scapro{v}{v^\ast}^2\wedge 1 \right)\, \xi_n(dv).
\end{align*}
Let $g_m$ denote the density of the standard normal distribution on $\Borel(\R^m)$.
For every $m,n\in\N$ with $m\le n$ it follows that
\begin{align*}
&\int_{\R^{n-m+1}} \Big(1-\text{Re}\,\phi_{\theta_n}(\beta_m e_m+\dots +\beta_ne_n)\Big)
  g_{n-m+1}(\beta_m,\dots, \beta_n)\, d\beta_m\cdots d\beta_n\\
&\qquad \le 2\int_{\R^{n-m+1}}\int_V \left(\abs{\sum_{k=m}^n \beta_k\scapro{v}{e_k}}^2\wedge 1\right) \,
   \xi_n(dv)\,  g_{n-m+1}(\beta_m,\dots, \beta_n)\, d\beta_m\cdots d\beta_n  \\
&\qquad \le 2\int_V  \left( \left(\int_{R^{n-m+1}}\left[\abs{\sum_{k=m}^n \beta_k\scapro{v}{e_k}}^2\right]g_{n-m+1}(\beta_m,\dots, \beta_n)\, d\beta_m\cdots d\beta_n \right) \wedge 1 \right) \,
    \xi_n(dv) \\
&\qquad =2\int_V \left( \sum_{k=m}^n\scapro{v}{e_k}^2\wedge 1\right)
    \,\xi_n(dv)\\
&\qquad =2\int_V \left( \sum_{k=m}^n\scapro{\pi_n v}{e_k}^2\wedge 1\right)
    \,(v_A+v_A^{-1})(dv)\\
&\qquad = 4\int_V \left( \sum_{k=m}^n\scapro{ v}{e_k}^2\wedge 1\right)
    \,v(dv).
\end{align*}
Condition \eqref{cond.Hilbert-Levy-int} implies
\begin{align*}
\limsup_{m\to\infty}\sup_{n\ge m}\int_{\R^{n-m+1}} \!\! \big(1-\text{Re}\,\phi_{\theta_n}(\beta_m e_m+\dots +\beta_ne_n)\big)
  g_{n-m+1}(\beta_m,\dots, \beta_n)\, d\beta_m\cdots d\beta_n=0,
\end{align*}
which shows  by \cite[Le.VI.2.3]{Para} that the family $\{\theta_n\}_{n\in\N}$ is relatively compact in $M(V)$. Since $\tilde{Z}_A$ and thus its characteristic function $\phi_{\tilde{Z}_A}$ are continuous (see \cite[Pro.IV.3.4]{Vaketal}), we have for each $v^\ast\in V^\ast$:
\begin{align}\label{eq.conchar}
  \lim_{n\to\infty} \phi_{\theta_n}(v^\ast)
  = \lim_{n\to\infty}\phi_{\tilde{Z}_A}\big(\scapro{e_1}{v^\ast}e_1+\dots +\scapro{e_n}{v^\ast}e_n\big)
  = \phi_{\tilde{Z}_A}(v^\ast).
\end{align}
Together with the relative compactness of $\{\theta_n\}_{n\in\N}$ it follows from Theorem IV.3.1 in \cite{Vaketal} that the probability measures $\{\theta_n\}_{n\in\N}$ converges weakly to a measure $\theta$, which coincides with the cylindrical distribution of $\tilde{Z}_A$ on $\Z(V)$. Consequently,  \cite[Th.IV.2.5]{Vaketal} guarantees that there exists a random variable $\tilde{I}_A \in L^0_P(\Omega;V)$ with $\scapro{\tilde{I}_A}{v^\ast}=\tilde{Z}_Av^\ast$ for all
$v^\ast\in V^\ast$. Thus, the cylindrical L{\'e}vy measure $\nu_A+\nu_A^{-}$ of $\tilde{Z}_A$ extends to the L{\'e}vy measure of $\tilde{I}_A$. Since
\begin{align*}
  \nu_A(C)\le \nu_A(C)+\nu_A^{-}(C) \qquad
  \text{for all }C\in \Z(V),
\end{align*}
Theorem \ref{th.Levy-dominated} implies that $\nu_A$ extends to a L{\'e}vy measure which shows Condition \eqref{cond.Levy-int}.
\end{proof}

\begin{remark}\label{re.anna}
  In the work \cite{Anna} on L{\'e}vy processes in Hilbert spaces, the author developes among others  a theory of stochastic integration for deterministic operators
  with respect to a classical L{\'e}vy process in a separable Hilbert space $U$. More specifically, let $V$ be another separable Hilbert space and define the set
  \begin{align*}
    S:=\left\{f\colon [0,T]\to \L(U,V):\, f\text{ is strongly measurable  and }
    \int_0^T \norm{f(s)}_{U\to V}^2\,ds <\infty\right\}.
  \end{align*}
Then by using tighness conditions for infinitely divisible measures in Hilbert spaces (see \cite{Para}), a stochastic integral is defined for integrands in $S$ in \cite{Anna}. In this case of genuine L{\'e}vy processes,  it is easy to see that each function $f\in S$ satisfies Condition \eqref{cond.drift-Pettis} in Lemma \ref{le.drift-cond-Pettis} and Conditions \eqref{cond.Hilbert-Covariance-int} and \eqref{cond.Hilbert-Levy-int} in Theorem \ref{th.intHilbert}. Thus, Theorem \ref{th.intHilbert} guarantees that each $f\in S$ is stochastically integrable in our sense according to Definition \ref{de.stochint}.
\end{remark}

\begin{corollary}\label{co.stablecotype}
Under the assumption of Theorem \ref{th.integrability}  let the cylindrical measure $\nu$ be defined by \eqref{cond.Levy-int}. Then we have the following:
\begin{enumerate}
  \item[{\rm (a)}] If $V$ is of type $p\in [1,2]$, then Condition \eqref{cond.Levy-int} replaced by
     \begin{enumerate}
       \item[{\rm (3$^\prime$)}] $\nu$ extends to a measure and $\int_V \Big(\norm{v}^p\wedge 1 \Big)\,\nu(dv)<\infty$,
     \end{enumerate}
     implies together with \eqref{cond.drift} and \eqref{cond.Covariance-int}
     that $f$ is stochastically integrable.
  \item[{\rm (b)}] If  $\;V$ is of cotype $q\in [2,\infty)$ then  stochastic integrability of $f$ implies
     \begin{enumerate}
       \item[{\rm (3$^\prime$)}] $\nu$ extends to a measure and $\int_V \Big(\norm{v}^q\wedge 1 \Big)\,\nu(dv)<\infty$.
     \end{enumerate}
\end{enumerate}
\end{corollary}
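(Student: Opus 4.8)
The plan is to deduce both statements directly from Theorem~\ref{th.integrability} by relating its Condition~\eqref{cond.Levy-int} --- which requires the cylindrical measure $\nu=(\mu\otimes\leb)\circ\chi^{-1}$ to extend to a genuine L\'evy measure on $\Borel(V)$ --- to the quantitative integrability condition (3$^\prime$). Conditions~\eqref{cond.drift} and \eqref{cond.Covariance-int} appear verbatim in both the corollary and the theorem, so the entire issue reduces to the third condition, and this is exactly the point at which the geometry of $V$ enters, through the classical characterisations of L\'evy measures in spaces of type and cotype.

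For part~(a) I would assume that $V$ is of type $p$ and that (3$^\prime$) holds, so that $\nu$ already extends to a $\sigma$-finite measure on $\Borel(V)$ with $\int_V(\norm{v}^p\wedge 1)\,\nu(dv)<\infty$ (the possible atom of $\nu$ at $0$ is irrelevant, since it contributes nothing to this integral). The key input is the sufficient condition for L\'evy measures in spaces of type $p$: a $\sigma$-finite measure $\nu$ on $\Borel(V)$ with $\int_V(\norm{v}^p\wedge 1)\,\nu(dv)<\infty$ is a L\'evy measure (see \cite{Linde}). Applying this to the extended $\nu$ shows that Condition~\eqref{cond.Levy-int} is met; combined with the hypotheses \eqref{cond.drift} and \eqref{cond.Covariance-int}, Theorem~\ref{th.integrability} then delivers the stochastic integrability of $f$.

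For part~(b) I would run the implication in the opposite direction. Assuming $f$ stochastically integrable, Theorem~\ref{th.integrability} guarantees that $\nu$ extends to a measure which is a L\'evy measure on $\Borel(V)$. The relevant input is now the necessary condition in spaces of cotype $q$: every L\'evy measure $\nu$ on a space of cotype $q$ satisfies $\int_V(\norm{v}^q\wedge 1)\,\nu(dv)<\infty$ (again \cite{Linde}). As $\nu$ is already a genuine measure, this is precisely assertion (3$^\prime$), which completes the argument.

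The genuinely hard content sits entirely inside the two cited type/cotype characterisations, which are the only places where the Banach-space geometry is exploited; everything else is bookkeeping around Theorem~\ref{th.integrability}. The one subtlety I would flag is the role of symmetry: the sharpest forms of these characterisations are stated for symmetric measures, so one may need to pass to the symmetrisation $\nu+\nu^-$ and then use the domination result Theorem~\ref{th.Levy-dominated}, exactly as in the proof of Theorem~\ref{th.intHilbert}, to transfer the L\'evy-measure property between $\nu$ and $\nu+\nu^-$ via the inequality $\nu\le\nu+\nu^-$ on $\Z(V)$.
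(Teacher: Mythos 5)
Your proposal is correct and follows essentially the same route as the paper: the drift and Gaussian conditions are untouched, and the whole content is the reduction of Condition \eqref{cond.Levy-int} to the quantitative moment condition via the classical sufficient (type $p$) and necessary (cotype $q$) characterisations of L\'evy measures, which the paper attributes to de Araujo and Gin\'e rather than to Linde. The symmetrisation caveat you flag is a reasonable precaution but is not needed in the paper's one-line argument.
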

\begin{proof}
The reformulation of Condition \eqref{cond.Levy-int} in Theorem \ref{th.integrability} follows in both cases from results in the article \cite{AraujoGine2}. In this work the authors establish sufficient conditions in Banach spaces of type $p$ and necesssary conditions in  Banach spaces of cotype $q$ for a $\sigma$-finite measure to be a L{\'e}vy measure.
\end{proof}

\begin{corollary}
If $V$ is the space  $\ell^p(\R)$ of sequences for  $p\in [2,\infty)$ equipped with the standard basis $(e_k)_{k\in\N}$,  then Condition \eqref{cond.Covariance-int} and \eqref{cond.Levy-int} in Theorem \ref{th.integrability} can be replaced by
  \begin{enumerate}
\item[{\rm (2$^\prime$)}] \inlineequation[cond.sequence-covariance]{\displaystyle \sum_{k=1}^\infty \left(\int_0^T \scapro{f^\ast(s)e_k}{Qf^\ast(s)e_k}\,ds  \right)^{p/2} <\infty. \hfill }
\item[{\rm (3$^\prime$)}] the cylindrical measure $\nu$  extends to a Radon measure on $\Borel(V)$ and satisfies
\begin{enumerate}
 \item[{\rm (a)}] \inlineequation[eq.sequence-Levy1]{\displaystyle\int_V \left( \norm{v}^{p} \wedge 1\right) \,\nu(dv)<\infty;\hfill}
 \item[{\rm (b)}] \inlineequation[eq.sequence-Levy2]{\displaystyle\sum_{k=1}^\infty \left(\int_{\norm{v}\le 1} \abs{\scapro{v}{e_k}}^2  \,\nu(dv)\right)^{p/2} <\infty.\hfill}
\end{enumerate}
\end{enumerate}
  \end{corollary}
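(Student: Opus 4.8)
\section*{Proof proposal}

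The plan is to apply Theorem~\ref{th.integrability} and to show that, for $V=\ell^p(\R)$ with $p\ge 2$, the abstract Condition~\eqref{cond.Covariance-int} is equivalent to (2$'$) while the abstract Condition~\eqref{cond.Levy-int} is equivalent to (3$'$), leaving Condition~\eqref{cond.drift} untouched. Both equivalences rest on an explicit description of Gaussian covariance operators and of L\'evy measures on $\ell^p$, which is accessible because the coordinate functionals $e_k$ lie simultaneously in $V$ and in $V^\ast=\ell^{p'}(\R)$, so that $\norm{v}^p=\sum_k\abs{\scapro{v}{e_k}}^p$ decouples along the basis.

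For the covariance part I would first argue, exactly as in the proof of Theorem~\ref{th.intHilbert}, that the closed graph theorem turns the bilinear form $(v^\ast,w^\ast)\mapsto\int_0^T\scapro{f^\ast(s)v^\ast}{Qf^\ast(s)w^\ast}\,ds$ into a positive symmetric bounded operator $R\colon V^\ast\to V$ with diagonal $\scapro{e_k}{Re_k}=\int_0^T\scapro{f^\ast(s)e_k}{Qf^\ast(s)e_k}\,ds$. The key point is that such an $R$ is a Gaussian covariance on $\ell^p$ if and only if $\sum_k\big(\scapro{e_k}{Re_k}\big)^{p/2}<\infty$. Realising the centred Gaussian field with covariance $R$ on an auxiliary space, Tonelli's theorem gives $\E{\norm{X}^p}=c_p\sum_k\big(\scapro{e_k}{Re_k}\big)^{p/2}$, so finiteness of this series places $X$ in $\ell^p$ almost surely and yields the measure; conversely Fernique's theorem forces every $p$-th moment of a genuine Gaussian measure on $\ell^p$ to be finite, which is necessity. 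This identifies \eqref{cond.Covariance-int} with (2$'$).

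For the L\'evy part I would prove that a $\sigma$-finite Radon measure $\nu$ on $\ell^p$ is a L\'evy measure if and only if (a) and (b) hold. Necessity is the easy direction: $\nu(\norm{v}>1)<\infty$ is automatic, and for a bounded-jump symmetric infinitely divisible $X$ with L\'evy measure $\nu|_{\norm{v}\le1}$ all moments are finite, so $\E{\norm{X}^p}=\sum_k\E{\abs{\scapro{X}{e_k}}^p}<\infty$. Since $p/2\ge 1$, Jensen's inequality together with $\E{\scapro{X}{e_k}^2}=\int_{\norm{v}\le1}\scapro{v}{e_k}^2\,\nu(dv)$ yields (b), while the coordinatewise lower bound $\E{\abs{\scapro{X}{e_k}}^p}\ge c\int_{\norm{v}\le1}\abs{\scapro{v}{e_k}}^p\,\nu(dv)$ supplies the small-jump half of (a); this is the cotype-$p$ mechanism already recorded in Corollary~\ref{co.stablecotype}(b).

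The substantial direction, and what I expect to be the main obstacle, is sufficiency. Assuming (a) and (b), I would split $\nu=\nu|_{\norm{v}>1}+\nu|_{\norm{v}\le1}$, dispose of the finite large-jump part as a compound Poisson law, and approximate the small-jump part by the centred compound Poisson measures $\theta_\epsilon$ with L\'evy measure $\nu|_{\epsilon<\norm{v}\le1}$. Convergence of the characteristic functions as $\epsilon\downarrow0$ is immediate from dominated convergence; the difficulty is the relative compactness of $\{\theta_\epsilon\}$ in $M(\ell^p)$, for which I would control the tails $\sum_{k>N}\E{\abs{\scapro{X_\epsilon}{e_k}}^p}$ uniformly in $\epsilon$. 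Here a Rosenthal--Burkholder type $p$-th moment inequality for each one-dimensional compensated Poisson integral ($p\ge2$) bounds $\E{\abs{\scapro{X_\epsilon}{e_k}}^p}$ by a constant times $\big(\int_{\norm{v}\le1}\scapro{v}{e_k}^2\,\nu(dv)\big)^{p/2}+\int_{\norm{v}\le1}\abs{\scapro{v}{e_k}}^p\,\nu(dv)$, and then (b) and (a) make the two resulting tail sums vanish as $N\to\infty$, uniformly in $\epsilon$ because $\nu|_{\epsilon<\norm{v}\le1}\le\nu|_{\norm{v}\le1}$. This tail estimate is the $\ell^p$ counterpart of the Gaussian computation driving the proof of Theorem~\ref{th.intHilbert}; granting it, the compactness criterion in $M(\ell^p)$ together with \cite[Th.IV.2.5]{Vaketal} produces the limiting infinitely divisible measure, so $\nu$ is a L\'evy measure and \eqref{cond.Levy-int} is equivalent to (3$'$). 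The delicate points are the exact form of the $p$-dependent moment inequality and the precise relative compactness criterion in $M(\ell^p)$ replacing \cite[Le.VI.2.3]{Para}.
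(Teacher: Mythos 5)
Your proposal is correct in substance, but it takes a much longer route than the paper: the paper's entire proof consists of two citations, namely Theorem V.5.6 in Vakhania--Tarieladze--Chobanyan for the equivalence of \eqref{cond.Covariance-int} with \eqref{cond.sequence-covariance}, and Yurinskii's characterisation of L\'evy measures on $\ell^p$ for the equivalence of \eqref{cond.Levy-int} with \eqref{eq.sequence-Levy1}--\eqref{eq.sequence-Levy2}. What you have done is reconstruct the proofs of both cited results. Your Gaussian argument (Tonelli plus $\E{\abs{\scapro{X}{e_k}}^p}=c_p\scapro{e_k}{Re_k}^{p/2}$ for sufficiency, Fernique for necessity) is exactly the standard proof of the first, and your L\'evy-measure argument (two-sided Rosenthal-type moment bounds for compensated Poisson integrals with $p\ge 2$, then flat concentration of the coordinate tails to get relative compactness in $M(\ell^p)$) is the standard proof of Yurinskii's theorem; both are sound. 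The trade-off is self-containedness versus length: the paper's version buys brevity at the cost of sending the reader to two references, yours makes visible why the exponent $p/2$ appears in (2$'$) and (b) and why the pair (a)+(b) is exactly the right decoupling of $\int(\norm{v}^p\wedge 1)\,\nu(dv)$ along the basis. One detail you should not wave away: the ``immediate'' dominated-convergence step for the characteristic functions of the truncated compound Poisson approximations needs the dominating function $\tfrac12\scapro{v}{v^\ast}^2$ to be $\nu$-integrable on $\{\norm{v}\le 1\}$, and for $p>2$ this does \emph{not} follow from (a) (since $\norm{v}^p\le\norm{v}^2$ there); it follows from (b) via Minkowski's integral inequality and H\"older, $\bigl(\int\scapro{v}{v^\ast}^2\,\nu(dv)\bigr)^{1/2}\le\sum_k\abs{\scapro{e_k}{v^\ast}}\bigl(\int\scapro{v}{e_k}^2\,\nu(dv)\bigr)^{1/2}\le\norm{v^\ast}_{p'}\bigl(\sum_k(\int\scapro{v}{e_k}^2\,\nu(dv))^{p/2}\bigr)^{1/p}$. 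With that supplied, your argument closes.
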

\begin{proof}
  Theorem 5.6 in \cite{Vaketal} guarantees that Conditions \eqref{cond.Covariance-int} and \eqref{cond.sequence-covariance} are equivalent. The class of L{\'e}vy measures in $\ell^p(\R)$ is described by Conditions \eqref{eq.sequence-Levy1}  and \eqref{eq.sequence-Levy2} according to a result in
\cite{Jurinskii}.
\end{proof}

\section{Ornstein-Uhlenbeck processes}

In this last part we apply the previous developed theory of stochastic integration to define Ornstein-Uhlenbeck processes driven by cylindrical L{\'e}vy processes. These processes are important since they are solutions of stochastic evolution equations driven by cylindrical L{\'e}vy processes, see for instance \cite{PeszatZab}. We do not study this connection in this work but we consider examples of specific cases, then these processes exist.

If $L$ is a cylindrical L{\'e}vy process in a Banach space $U$ and $G\in \L(U,V)$
then the cylindrical L{\'e}vy process $GL$ defined by $(GL)(t)v^\ast:=L(t)(G^\ast v^\ast)$ for all $v^\ast\in V^\ast$ and $t\ge 0$ is a cylindrical L{\'e}vy process in $V$. It follows for a function $f\colon [0,T]\to \L(U,V)$ that if $f(\cdot)\circ G$ is stochastically integrable w.r.t. $L$ then $f$ is stochastically integrable w.r.t $GL$ and
\begin{align*}
  \int_0^T f(s)G\,dL(s)= \int_0^T f(s)\, d(GL)(s).
  \end{align*}
Thus, without loss of generality we can assume in this section $U=V$, i.e. $L$ is a cylindrical L{\'e}vy process in the separable Banach space $V$.

\begin{definition}\label{de.OU}
If a strongly continuous semigroup $(T(t))_{t\in [0,T]}$ on $V$ is stochastically integrable with respect to $L$, then we call the stochastic process $(X(t):\, t\in [0,T]$) defined by
\begin{align*}
X(t):=T(t)v_0+ \int_0^t T(t-s)\,dL(s)  \qquad\text{for all }t\in [0,T],
\end{align*}
{\em Ornstein-Uhlenbeck process with initial value $v_0\in V$ driven by $L$}.
\end{definition}

The existence of the stochastic convolution integral in Definition \ref{de.OU} is guaranteed by  the following result.
\begin{lemma}
A function $f\colon [0,T]\to \L(V,V)$ is stochastically integrable if and only if $f(T-\cdot)$ is stochastically integrable. In this case we have
the equality in distribution:
\begin{align*}
  \int_0^T f(s)\,dL(s)\stackrel{d}{=} \int_0^T f(T-s)\,dL(s).
\end{align*}
\end{lemma}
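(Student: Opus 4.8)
The plan is to reduce the entire statement to the change of variables $s\mapsto T-s$, which preserves Lebesgue measure on $[0,T]$, and to check that it carries each of the three conditions of Theorem~\ref{th.integrability} to itself. Writing $g:=f(T-\cdot)$, so that $g^\ast(s)v^\ast=f^\ast(T-s)v^\ast$, I would first observe that the reflection of a regulated function is again regulated (at each point the roles of the one-sided limits are merely exchanged, and these still exist), so $g$ is weakly in $\G([0,T];V^\ast)$ whenever $f$ is. Thus Theorem~\ref{th.integrability} is applicable to both $f$ and $g$, and the equivalence of integrability will follow once the three conditions are shown to transfer.

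For Condition~\eqref{cond.drift} I would introduce the reflection $\calR\colon L^1([0,T];\R)\to L^1([0,T];\R)$, $(\calR h)(s)=h(T-s)$, which is an isometric isomorphism and therefore weak--weakly continuous with $\calR^{-1}=\calR$. With $T_a$ as in \eqref{cond.drift} and $T_a^g$ the analogous map for $g$, one has $T_a^g=\calR\circ T_a$, so that $T_a^g$ is weak$^\ast$--weakly sequentially continuous if and only if $T_a$ is, by invertibility of $\calR$. For Condition~\eqref{cond.Covariance-int} the substitution shows
\begin{align*}
\int_0^T \scapro{g^\ast(s)v^\ast}{Qg^\ast(s)v^\ast}\,ds=\int_0^T \scapro{f^\ast(s)v^\ast}{Qf^\ast(s)v^\ast}\,ds
\qquad\text{for all }v^\ast\in V^\ast,
\end{align*}
so the prospective covariance operator $R$ is literally the same for $f$ and $g$. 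For Condition~\eqref{cond.Levy-int} I would use the bijection $\sigma\colon[0,T]\times V\to[0,T]\times V$, $\sigma(s,u)=(T-s,u)$; since it only reflects the time coordinate, $(\mu\otimes\leb)\circ\sigma^{-1}=\mu\otimes\leb$, and with $\chi^f(s,u)=f(s)u$, $\chi^g(s,u)=g(s)u$ one has $\chi^g=\chi^f\circ\sigma$, whence
\begin{align*}
\nu^g=(\mu\otimes\leb)\circ(\chi^g)^{-1}=(\mu\otimes\leb)\circ\sigma^{-1}\circ(\chi^f)^{-1}=(\mu\otimes\leb)\circ(\chi^f)^{-1}=\nu^f.
\end{align*}
In particular $\nu^g$ extends to a L\'evy measure exactly when $\nu^f$ does, so all three conditions hold for $g$ if and only if they hold for $f$, proving the equivalence of stochastic integrability.

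For the distributional equality it suffices to match characteristic functions. Using \eqref{eq.charfuncJ} and Lemma~\ref{le.Zdistribution}, the integrals $I_f:=\int_0^T f(s)\,dL(s)$ and $I_g:=\int_0^T g(s)\,dL(s)$ satisfy
\begin{align*}
\phi_{I_f}(v^\ast)=\exp\left(\int_0^T\Psi(f^\ast(s)v^\ast)\,ds\right),\qquad
\phi_{I_g}(v^\ast)=\exp\left(\int_0^T\Psi(f^\ast(T-s)v^\ast)\,ds\right),
\end{align*}
and the same substitution $s\mapsto T-s$ gives $\phi_{I_f}=\phi_{I_g}$ on $V^\ast$. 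Since $V$ is separable, the law of a $V$-valued random variable is determined by its characteristic function, so $I_f\stackrel{d}{=}I_g$.

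I expect no serious obstacle, as the argument is essentially a single measure-preserving change of variables. The one point that requires care is Condition~\eqref{cond.drift}, where the abstract weak$^\ast$--weak sequential continuity does not visibly commute with time reversal; recognising the reflection as a bounded invertible operator $\calR$ on $L^1([0,T];\R)$, so that $T_a^g=\calR\circ T_a$, sidesteps this cleanly. Alternatively one could invoke the integral reformulation of Lemma~\ref{le.drift-cond-Pettis} together with the fact that $s\mapsto T-s$ is a bijection of the Borel sets of $[0,T]$.
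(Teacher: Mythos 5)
Your argument is correct and follows essentially the same route as the paper: both proofs reduce the equivalence to the invariance of Conditions \eqref{cond.drift}--\eqref{cond.Levy-int} of Theorem \ref{th.integrability} under the measure-preserving reflection $s\mapsto T-s$, and obtain the distributional identity by matching the characteristic functions from Lemma \ref{le.Zdistribution}. You merely supply more detail than the paper does (the reflection operator $\calR$ on $L^1$, the identity $\nu^g=\nu^f$), which is a welcome elaboration rather than a different method.
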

\begin{proof}
If $f$ is  weakly in $\G([0,T];V^\ast)$ then $f(T-\cdot)$ is also  weakly in $\G([0,T];V^\ast)$.
Since Conditions \eqref{cond.drift} - \eqref{cond.Levy-int} in Theorem \ref{th.integrability} are invariant  under a transformation $s\mapsto T-s$ the first part of the Lemma is proved. The identity
\begin{align*}
  \exp\left(\int_0^T \Psi(f^\ast(s)v^\ast)\,ds\right) = \exp\left(\int_0^T \Psi(f^\ast(T-s)v^\ast)\,ds\right) \qquad \text{for all }v^\ast\in V^\ast,
\end{align*}
shows the equality of the distributions by Lemma \ref{le.Zdistribution}
\end{proof}

As an example of an Ornstein-Uhlenbeck process we consider the case of a diagonalisable semigroup  and of a cylindrical L{\'e}vy process defined by a sum acting independently along the eigenbasis of the semigroup, cf.\ Lemma \ref{le.weakconvsum}. This kind of setting is considered in several publications, e.g.\ \cite{PeszatZab12} and \cite{PriolaZabczyk}.
\begin{corollary} \label{le.int-stablenoise}
Assume that $V$ is a Hilbert space and that there exists an orthonormal basis $(e_k)_{k\in\N}$ of $V$ and $(\gamma_k)_{k\in\N}\subseteq\R$ such that the semigroup $(T(t))_{t\in [0,T]}$ satisfies
\begin{align}\label{eq.diag-semigroup}
  T^\ast(t)e_k= e^{\gamma_k t}e_k \qquad\text{for all }t\in [0,T],\, k\in\N.
\end{align}
Let the cylindrical L{\'e}vy process $L$ be of the form
\begin{align*}
  L(t)v^\ast = \sum_{k=1}^\infty \scapro{e_k}{v^\ast}\ell_k(t)\qquad
  \text{for all }v^\ast \in V^\ast,\, t\ge 0,
\end{align*}
where   $(\ell_k)_{k\in\N}$ is a sequence of independent, symmetric L{\'e}vy processes in $\R$
with characteristics $(0,0,\nu_k)$.
Then the semigroup $(T(t))_{t\in [0,T]}$ is stochastically integrable w.r.t.\ $L$ if and only
if
\begin{align}\label{eq.cond-sum-int}
  \sum_{k=1}^\infty \int_0^T \int_{\R} \Big(e^{2\lambda_k s}\abs{\beta}^2\wedge 1\Big)\,\nu_k(d\beta)\,ds<\infty.
\end{align}
\end{corollary}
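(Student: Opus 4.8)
The plan is to apply Theorem~\ref{th.intHilbert} to the integrand $f(s):=T(s)$, which takes values in $\L(V,V)$ since here $U=V$. First I would read off the cylindrical characteristics of $L$. As $L$ is of the form treated in Lemma~\ref{le.weakconvsum}, built from independent, symmetric processes $\ell_k$ with characteristics $(0,0,\nu_k)$, the formulas there give $a\equiv 0$ and $Q=0$: the drift contribution $\int_{\R}\beta\big(\1_{B_{\R}}(\scapro{e_k}{v^\ast}\beta)-\1_{B_{\R}}(\beta)\big)\,\nu_k(d\beta)$ vanishes because its integrand is odd in $\beta$ while $\nu_k$ is symmetric, and $Q=0$ follows from $r_k=0$. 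The cylindrical L\'evy measure $\mu$ is then the one whose one-dimensional images satisfy $\mu\circ(v^\ast)^{-1}=\sum_k \nu_k\circ m_k(v^\ast)^{-1}$. I would also record that $f$ is weakly in $\G([0,T];V^\ast)$, since on a Hilbert space the adjoint semigroup $T^\ast$ is again strongly continuous, so $s\mapsto T^\ast(s)v^\ast$ is continuous and hence regulated.

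With $a\equiv 0$ and $Q=0$, Conditions~(1) and~(2) of Theorem~\ref{th.intHilbert} hold trivially: $T_a\equiv 0$ is weak-weakly sequentially continuous, and $\int_0^T \mathrm{tr}\big[T(s)QT^\ast(s)\big]\,ds=0$ verifies \eqref{cond.Hilbert-Covariance-int}. Thus the whole assertion reduces to Condition~\eqref{cond.Hilbert-Levy-int}. Inserting $f^\ast(s)e_k=T^\ast(s)e_k=e^{\gamma_k s}e_k$, the inner integrand becomes $\sum_{k=m}^n e^{2\gamma_k s}\scapro{u}{e_k}^2\wedge 1$, which depends on $u$ only through the projection $\pi(u):=(\scapro{u}{e_m},\dots,\scapro{u}{e_n})$.

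The decisive step is to evaluate $\int_V(\cdots)\,\mu(du)$ through this finite-dimensional projection. The image $\mu\circ\pi^{-1}$ is the L\'evy measure of the $\R^{n-m+1}$-valued L\'evy process $(\ell_m,\dots,\ell_n)$; since the $\ell_k$ are independent, \cite[Pro.11.10]{Sato} shows that this L\'evy measure is carried by the coordinate axes and equals $\sum_{k=m}^n$ of the embedding of $\nu_k$ into the $k$-th axis. On the $k$-th axis only $\scapro{u}{e_k}$ is nonzero, so the integrand collapses to $e^{2\gamma_k s}\scapro{u}{e_k}^2\wedge 1$ and I obtain $\int_V\big(\sum_{k=m}^n e^{2\gamma_k s}\scapro{u}{e_k}^2\wedge 1\big)\,\mu(du)=\sum_{k=m}^n\int_{\R}\big(e^{2\gamma_k s}\abs{\beta}^2\wedge 1\big)\,\nu_k(d\beta)$. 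I expect this ``mass concentrated on the axes'' computation, together with its justification from the independence of the $\ell_k$, to be the main obstacle, because the $\wedge 1$ couples the coordinates and prevents a naive use of the one-dimensional images of $\mu$ alone.

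Finally, integrating in $s$ and applying Tonelli's theorem (all integrands being nonnegative), Condition~\eqref{cond.Hilbert-Levy-int} reads $\limsup_{m\to\infty}\sup_{n\ge m}\sum_{k=m}^n C_k=0$ with $C_k:=\int_0^T\int_{\R}\big(e^{2\gamma_k s}\abs{\beta}^2\wedge 1\big)\,\nu_k(d\beta)\,ds$. Each $C_k$ is finite because $e^{2\gamma_k s}\abs{\beta}^2\wedge 1\le \max(1,e^{2\gamma_k s})(\abs{\beta}^2\wedge 1)$ and $\nu_k$ is a L\'evy measure; hence $\sup_{n\ge m}\sum_{k=m}^n C_k=\sum_{k=m}^\infty C_k$, and the $\limsup$ is the limit of this decreasing tail. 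This tail vanishes precisely when $\sum_{k=1}^\infty C_k<\infty$, which (with $\lambda_k=\gamma_k$) is exactly \eqref{eq.cond-sum-int}, establishing the claimed equivalence.
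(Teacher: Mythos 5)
Your proposal is correct and follows essentially the same route as the paper: it reads off the characteristics $(0,0,\mu)$ from Lemma \ref{le.weakconvsum}, uses independence of the $\ell_k$ (via \cite[Pro.11.10]{Sato}) to see that $\mu\circ\pi_{e_m,\dots,e_n}^{-1}$ is concentrated on the coordinate axes so that the integral of $\sum_{k=m}^n e^{2\gamma_k s}\scapro{u}{e_k}^2\wedge 1$ collapses to $\sum_{k=m}^n\int_{\R}(e^{2\gamma_k s}\abs{\beta}^2\wedge 1)\,\nu_k(d\beta)$, and then applies Theorem \ref{th.intHilbert} after noting that $T^\ast$ is strongly continuous on a Hilbert space. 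Your added observations --- that $a\equiv 0$ and $Q=0$ make Conditions (1) and (2) trivial, and that each $C_k$ is automatically finite so that the tail criterion is genuinely equivalent to \eqref{eq.cond-sum-int} --- are correct details that the paper leaves implicit.
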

\begin{proof}
According to Lemma \ref{le.weakconvsum}, the cylindrical L{\'e}vy process $L$ has characteristics $(0,0,\mu)$ satisfying $\big(\mu\circ e_k^{-1}\big)(d\beta) = \nu_k(d\beta)$ for all $k\in\N$.
Independence of the L{\'e}vy processes $(\ell_k)_{k\in\N}$ implies for
the L{\'e}vy measure $\mu\circ \pi_{e_m,\dots, e_n}^{-1}$ of $\big(L(1)(e_m),\dots, L(1)(e_n)\big)$ for $0\le m\le n$
the identity
\begin{align*}
  \mu\circ \pi_{e_m,\dots, e_n}^{-1}
  =\sum_{k=m}^n \underbrace{\delta_0\otimes \cdots \otimes \delta_0}_{k-m \text{ times}}\otimes \; \big( \mu\circ \pi_{e_k}^{-1}\big)\otimes \underbrace{\delta_0\otimes \cdots \otimes\delta_0}_{n-k \text{ times}}.
\end{align*}
Thus, we obtain for all $v\in V$:
\begin{align*}
&\int_0^T \int_V \left(\sum_{k=m}^n \scapro{v}{T^\ast(s)e_k}^2\wedge 1\right)\, \mu(dv)\,ds\\
&\qquad\qquad= \int_0^T \int_V \left(\sum_{k=m}^n \scapro{v}{e^{\lambda_ks} e_k}^2\wedge 1\right)\, \mu(dv)\,ds\\
&\qquad\qquad= \int_0^T \int_{\R^{n-m+1}} \left(\sum_{k=m}^n\abs{e^{\lambda_k s}\beta_k}^2\wedge 1\right)\, \big(\mu\circ \pi_{e_m,\dots,  e_n}^{-1}\big)(d\beta_m \cdots d\beta_n)\,ds\\
&\qquad\qquad= \sum_{k=m}^n \int_0^T \int_{\R} \left(e^{2\lambda_ks}{\beta}^2\wedge 1\right)\, \big(\mu\circ e_k^{-1}\big)(d\beta)\,ds.
\end{align*}
Since $V$ is a Hilbert space, the adjoint semigroup $(T^\ast(t))_{t\in [0,T]}$ is strongly continuous, and thus $(T(t))_{t\in [0,T]}$ is weakly in $\G([0,T];V^\ast)$. An application of Theorem \ref{th.intHilbert} establishes that the semigroup is stochastically integrable if and only if \eqref{eq.cond-sum-int} is satisfied.
\end{proof}

\begin{example}
Assume that the cylindrical L{\'e}vy process $L$ is given as in Example \ref{ex.PriolaZab}
by $\ell_k(\cdot):=\sigma_k h_k(\cdot)$, where $(h_k)_{k\in\N}$ is a sequence of independent,  symmetric, $\alpha$-stable processes and $(\sigma_k)_{k\in\N}\subseteq \R$.  If
the strongly continuous semigroup $(T(t))_{t\in [0,T]}$ satisfies \eqref{eq.diag-semigroup} for
$\lambda_k< 0 $ with $\lambda_k\to -\infty$ for $k\to \infty$, then a simple calculation
shows that \eqref{eq.cond-sum-int} is satisfied if and only if
\begin{align*}
  \sum_{k=1}^\infty \frac{\abs{\sigma_k}^\alpha}{\abs{\gamma_k}}<\infty.
\end{align*}
In this case, the semigroup is stochastically integrable, which coincide with a result in \cite{PeszatZab12}.
\end{example}

We now consider stochastic integrability of a semigroup $(T(t))_{t\ge 0}$ w.r.t.\ to a cylindrical L{\'e}vy noise constructed by subordination as in Example \ref{ex.BrzezniakZabczyk}. In fact, we will show integrability in a possible smaller subspace $E\subseteq V$ with norm $\norm{\cdot}_E$ assuming $T(t)(V)\subseteq E$ for almost all $t>0$. Recall that $H_C$ denotes the reproducing kernel Hilbert space of the subordinated cylindrical Wiener process and $i_C:H_C\to V$ its embedding. In the following denote by $R(H_C,E)$  the space of $\gamma$-radonifying operators $g:H_C\to E$. For $g\in R(H_C,E)$ and $p\in [1,\infty)$
define
\begin{align*}
  \norm{g}_{R_p(H_C,E)}^p:=E\left[\norm{\sum_{k=1}^\infty \gamma_k gh_k}_E^p\right],
\end{align*}
where $(\gamma_k)_{k\in\N}$ is a family of independent, real valued standard normally distributed random variables and $(h_k)_{k\in\N}$ is an orthonormal basis in $H_C$.

\begin{corollary}\label{co.BrzZab}
Let $L$ be the cylindrical L{\'e}vy process in a separable Banach space $V$ defined by
\begin{align}
  L(t)v^\ast:= W\big(\ell(t)\big)v^\ast\qquad\text{for all }v^\ast\in V^\ast,\, t\ge 0,
\end{align}
where $W$ denotes a cylindrical Wiener process with covariance operator $C$ and $\ell$ is a real valued subordinator  with characteristics $(0,0,\rho)$. Let $E$ be a separable Banach space of type $p\in [1,2]$. If the semigroup $(T(t))_{t\in [0,T]}$ in $V$ is weakly in $\G([0,T];V^\ast)$ and the mapping $T(t)\circ i_C$ is in $R(H_C,E)$ for almost all $t\in [0,T]$, then
\begin{align*}
  \int_0^\infty \int_0^T \Big( \abs{r}^{p/2} \norm{T(s)\circ i_C}_{R_p(H_C,E)}^p\wedge 1 \Big)\,ds\,\rho(dr)<\infty,
\end{align*}
implies that the semigroup $(T(t))_{t\in [0,T]}$ is stochastically integrable w.r.t.\ $L$ and the stochastic integral is $E$-valued.
\end{corollary}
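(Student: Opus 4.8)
The plan is to regard the semigroup as a $\L(V,E)$-valued integrand and to invoke Corollary~\ref{co.stablecotype}(a), exploiting that $E$ is of type $p$. First I would use Lemma~\ref{le.BrzZab10} to read off that $L$ has cylindrical characteristics $(0,0,\mu)$ with $\mu=(\gamma\otimes\rho)\circ\kappa^{-1}$; the Gaussian part vanishes because the drift of the subordinator $\ell$ is zero, i.e.\ $Q=\alpha C$ with $\alpha=0$ in the notation of that lemma. Putting $f(s):=T(s)$, which lies in $\L(V,E)$ since $T(s)(V)\subseteq E$, Conditions~\eqref{cond.drift} and \eqref{cond.Covariance-int} of Theorem~\ref{th.integrability} become automatic: $a=0$ forces $T_a\equiv 0$, and $Q=0$ forces the Gaussian covariance $R=0$. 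After verifying that $T(\cdot)$ is weakly in $\G([0,T];V^\ast)$ as an $E$-valued integrand, it remains to check Condition~(3$^\prime$) of Corollary~\ref{co.stablecotype}(a) with $E$ in the role of the type-$p$ codomain, namely that the cylindrical measure $\nu=(\mu\otimes\leb)\circ\chi^{-1}$, with $\chi(s,v)=T(s)v$, extends to a measure on $\Borel(E)$ and satisfies $\int_E(\norm{v}_E^p\wedge 1)\,\nu(dv)<\infty$.

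I would first produce the extension of $\nu$ concretely. For almost every $s\in[0,T]$ and every $r>0$ the scaled operator $\sqrt{r}\,T(s)\circ i_C$ is again $\gamma$-radonifying, whence $\gamma\circ(\sqrt{r}\,T(s)i_C)^{-1}$ extends to a centred Gaussian measure $\gamma_{s,r}$ on $\Borel(E)$. Setting $\theta:=\int_0^T\!\int_0^\infty\gamma_{s,r}\,\rho(dr)\,ds$, a routine computation on cylindrical sets shows $\theta=\nu$ on $\Z(E)$, so $\theta$ is the sought extension. Its $\sigma$-finiteness follows from Chebyshev's inequality applied to each Gaussian: for $\epsilon\le 1$,
\begin{align*}
  \theta\big(\{\norm{v}_E\ge\epsilon\}\big)
  \le\epsilon^{-p}\int_0^T\!\int_0^\infty\Big(r^{p/2}\norm{T(s)\circ i_C}_{R_p(H_C,E)}^p\wedge 1\Big)\,\rho(dr)\,ds<\infty,
\end{align*}
the last bound being the hypothesis (after Tonelli).

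With the extension available I would compute, unravelling $\nu=(\mu\otimes\leb)\circ\chi^{-1}$ and then $\mu=(\gamma\otimes\rho)\circ\kappa^{-1}$ with $\kappa(k,r)=\sqrt{r}\,i_Ck$,
\begin{align*}
  \int_E\big(\norm{v}_E^p\wedge 1\big)\,\theta(dv)
  =\int_0^T\!\int_0^\infty\!\int_{H_C}\Big(r^{p/2}\norm{T(s)i_Ck}_E^p\wedge 1\Big)\,\gamma(dk)\,\rho(dr)\,ds.
\end{align*}
Since $\gamma$ is a probability measure on $H_C$ and $x\mapsto x\wedge 1$ is concave, Jensen's inequality applied to the innermost integral yields the pointwise bound $r^{p/2}\norm{T(s)\circ i_C}_{R_p(H_C,E)}^p\wedge 1$, where I use $\int_{H_C}\norm{T(s)i_Ck}_E^p\,\gamma(dk)=\norm{T(s)\circ i_C}_{R_p(H_C,E)}^p$, valid because $T(s)\circ i_C$ radonifies $\gamma$ to an $E$-valued Gaussian measure whose $p$-th moment is exactly the $R_p$-norm. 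Integrating out $s$ and $r$ reproduces the quantity assumed finite, so $\int_E(\norm{v}_E^p\wedge 1)\,\theta(dv)<\infty$. Conditions \eqref{cond.drift}, \eqref{cond.Covariance-int} and (3$^\prime$) being met, Corollary~\ref{co.stablecotype}(a) delivers stochastic integrability of $T(\cdot)$, and since the codomain is $E$ the integral is $E$-valued.

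I expect the heart of the argument to be Condition~(3$^\prime$): turning the cylindrical Lévy measure $\mu$, assembled from the Gaussian cylindrical measure $\gamma$ and the subordinator, into a genuine $\sigma$-finite Lévy measure on the smaller type-$p$ space $E$. The pivotal manoeuvre is the concavity/Jensen estimate, which replaces the pointwise truncation $r^{p/2}\norm{T(s)i_Ck}_E^p\wedge 1$ by the $R_p$-norm occurring in the hypothesis; once this is in place, the radonifying property supplies the extension and the type-$p$ assumption (through Corollary~\ref{co.stablecotype}) upgrades the moment bound to the Lévy-measure property. A subtler point deserving care is the weak regulatedness of $T(\cdot)$ as an $E$-valued integrand, since the adjoint semigroup need not be norm-continuous on $V^\ast$; this is precisely where the standing hypothesis that $T$ is weakly in $\G([0,T];V^\ast)$ enters.
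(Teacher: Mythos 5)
Your overall strategy coincides with the paper's: read off the characteristics $(0,0,\mu)$ from Lemma \ref{le.BrzZab10}, observe that Conditions \eqref{cond.drift} and \eqref{cond.Covariance-int} are vacuous here, realise the extension of $\nu$ as a superposition of the radonified Gaussian measures $\gamma_{s,r}=\gamma\circ\big(\sqrt{r}\,T(s)i_C\big)^{-1}$, bound the truncated $p$-th moment by pulling the $H_C$-integral inside $\cdot\wedge 1$ via concavity, and conclude with Corollary \ref{co.stablecotype}(a). The Jensen step and the identification of $\int_{H_C}\norm{T(s)i_Ch}_E^p\,\gamma(dh)$ with $\norm{T(s)\circ i_C}_{R_p(H_C,E)}^p$ are exactly the paper's final estimate.

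The one step you dismiss as routine is in fact where the paper spends almost all of its effort. Before the identity $\theta=\nu$ on $\Z(E)$ can even be stated, one must know that $\theta(B):=\int_0^T\!\int_0^\infty\gamma_{s,r}(B)\,\rho(dr)\,ds$ defines a Borel measure, and this requires $(s,r)\mapsto\gamma_{s,r}(B)$ to be jointly measurable for \emph{every} $B\in\Borel(E)$, not merely for cylinder sets. The paper handles this by first proving continuity of $s\mapsto\gamma_s(C)$ and $r\mapsto\theta_r(C)$ on a generating $\pi$-system of cylinder sets built from the sun-dual $V^\odot$ (chosen precisely so that $s\mapsto T^\ast(s)v^\ast$ is continuous there), and then invoking Fremlin's disintegration theorem (452C) twice to produce the genuine measures $\theta$ and $\mu$. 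Your sketch can be completed — since each $\gamma_{s,r}$ is a probability measure, a monotone class argument upgrades measurability from the cylinder $\pi$-system to all of $\Borel(E)$ — but as written this is a gap, and it conceals the nontrivial content of the proof. On the positive side, your Chebyshev estimate for $\theta\big(\{\norm{v}_E\ge\epsilon\}\big)$ is a clean way to exhibit $\sigma$-finiteness away from the origin, a point the paper leaves implicit.
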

\begin{proof}
Let $\gamma$ denote the canonical cylindrical Gaussian measure on $H_C$ and let $\kappa$ be defined as in Lemma \ref{le.BrzZab10}, i.e.\ $\kappa(h,s):=\sqrt{s}i_Ch$, and $\chi$ as in Theorem \ref{th.integrability}, i.e.\ $\chi(s,v):=T(s)v$. For applying Corollary \ref{co.stablecotype} we have to show that the cylindrical measure $\nu=\big(\big((\gamma\otimes \rho)\circ \kappa^{-1}\big)\otimes\, \text{\rm leb}\big)\circ \chi^{-1}$ extends to a measure on $\Borel(V)$.   For this purpose define the family of cylindrical sets
\begin{align*}
  G:&=\big\{ C(v_1^\ast,\dots, v_n^\ast;R):\, v_1^\ast,\dots, v_n^\ast\in V^\odot,\\
  & \qquad \qquad
  R=(a_1,b_1)\times\dots \times (a_n,b_n), \,-\infty\le a_j< b_j\le \infty,\, j=1,\dots, n,\,n\in\N \big\},
\end{align*}
where $V^\odot$ denotes the weak$^\ast$ dense subspace of $V^\ast$ such that $(T^\ast(t))_{t\in [0,T]}$ acts strongly continuously on $V^\odot$. Since $V$ is separable and $V^\odot$
separates points in $V$, Theorem I.2.1 in \cite{Vaketal} guarantees that $G$
generates the $\sigma$-algebra $\Borel(V)$.
Define $\gamma_s:=\gamma\circ \big(T(s)\circ i_C\big)^{-1}$ for all $s\in [0,T]$. Let $\Gamma\colon H_C\to L^0_P(\Omega;\R)$ be a cylindrical random variable with distribution $\gamma$. Since $\Gamma(T^\ast(s_k)v^\ast) \to \Gamma(T^\ast(s)v^\ast)$ in probability for $s_k\to s$ and all $v^\ast\in V^\odot$, the portmanteau theorem in $\R^n$ implies
for each $C:=C(v_1^\ast,\dots, v_n^\ast;R)\in G$:
\begin{align}\label{eq.cont-gamma}
\begin{split}
  \gamma_{s_k}(C)&= P\big( ( \Gamma(T^\ast(s_k)v_1^\ast),\dots,  \Gamma(T^\ast(s_k)v_n^\ast)) \in R\big)\\
  &\to P\big( ( \Gamma(T^\ast(s)v_1^\ast),\dots,  \Gamma(T^\ast(s)v_n^\ast ) )\in R\big) = \gamma_s(C) \qquad\text{as }s_k\to s.
\end{split}
\end{align}
Consequently, Theorem 452C in \cite{Fremlin4} on disintegration of measures implies that
$s\mapsto \gamma_s(B)$ is measurable for all $B\in \Borel(V)$ and almost all $s\in [0,T]$, and that there exists a Borel measure $\theta$ on $\Borel(V)$ such that
\begin{align}\label{eq.measure-theta}
  \theta(B)=\int_0^T \gamma_s(B)\,ds \qquad\text{for all }B\in \Borel(V).
\end{align}
Define for each $r\in (0,\infty)$ the measure $\theta_r(B):= \theta(r^{-1/2}B)$ for all $B\in \Borel(V)$. If a sequence $(r_k)_{k\in\N}\subseteq (0,\infty)$ converges to $r\in (0,\infty)$ then it follows similarly as in \eqref{eq.cont-gamma} that $\gamma_s(r_k^{-1/2}C)\to \gamma_s(r^{-1/2}C)$ for all $s\in [0,T]$ and $C\in G$. By applying Lebesgue's theorem of dominated convergence we conclude from \eqref{eq.measure-theta} that the mapping $r\mapsto \theta_r(C)$
is continuous for all $C\in G$. Another application
of  Theorem 452C in \cite{Fremlin4} implies that there exists a measure $\mu$ on $\Borel(V)$ satisfying
\begin{align*}
  \mu(B)=\int_0^\infty \theta_r(B)\,\rho(dr)=
  \int_0^\infty \theta(r^{-1/2}B)\,\rho(dr)
  \qquad\text{for all }B\in \Borel(V).
\end{align*}
Note that by our argument above the function $(s,r)\mapsto \gamma_s(r^{-1/2}C)$ is separately continuous in both variables for all $C\in G$ and thus jointly measurable.  Tonelli's theorem enables us to conclude for all $C\in G$ that
\begin{align*}
  \mu(C)=\int_0^\infty \int_0^T \gamma_s(r^{-1/2}C)\,ds\, \rho(dr)
=\int_0^T \int_0^\infty \gamma_s(r^{-1/2}C)\, \rho(dr)\,ds
=\nu(C),
\end{align*}
which shows that $\nu$ extends to the measure $\mu$ on $\Borel(V)$. The measure $\mu$ satisfies
\begin{align*}
\int_V \Big(\norm{v}^p\wedge 1 \Big)\,\mu(dv)
&=\int_0^T \int_0^\infty \int_{H_C} \Big(\norm{T(s)(\sqrt{r}i_Ch)}^p\wedge 1 \Big)\, \gamma(dh)\rho(dr) ds\notag\\
&\le \int_0^T \int_0^\infty \left(\abs{r}^{p/2}\int_{H_C} \big(\norm{T(s)(i_Ch)}^p\big)\, \gamma(dh)\wedge 1\right) \rho(dr) ds\notag\\
&= \int_0^T \int_0^\infty \left(\abs{r}^{p/2}
\norm{T(s)\circ i_C}_{R_p(H_C,E)}^p\wedge 1\right)\, \rho(dr)\,ds.
\end{align*}
An application of Corollary \ref{co.stablecotype} completes the proof.
\end{proof}

A very similar result as Corollary \ref{co.BrzZab} is derived in \cite{BrzZab10}. However,  the conditions in our result are purely intrinsic, whereas the result in \cite{BrzZab10} is based on conditions in terms of an additional Banach space, which is not related to the problem under consideration.

\vspace{10pt}
\noindent\textbf{Acknowledgments:} the author would like to thank the referees for their careful reading and valuable comments and suggestions.

\bibliographystyle{plain}

\end{document}